\setlist[itemize]{leftmargin=*} %
\setlist[enumerate]{leftmargin=*}
\theoremstyle{plain}
\newtheorem{theorem}{Theorem}[section]
\newtheorem{claim}[theorem]{Claim}
\newtheorem{lemma}[theorem]{Lemma}
\newtheorem{corollary}[theorem]{Corollary}
\newtheorem{conjecture}[theorem]{Conjecture}
\newtheorem{problem}[theorem]{Problem}
\theoremstyle{definition}
\newtheorem*{defn*}{Definition}
\def\expandafter\normalsize\expandafter{%
    \normalsize
    \setlength\abovedisplayskip{4pt}
    \setlength\belowdisplayskip{4pt}
    \setlength\abovedisplayshortskip{4pt}
    \setlength\belowdisplayshortskip{4pt}
}
\newcommand{\calP}{\mathcal{P}}
\def\eps {\varepsilon}
\newcommand{\cP}{\mathcal{P}}
\newcommand{\ex}{\mathrm{ex}}
\renewcommand{\Pr}{\mathbb{P}}
\title{The largest subgraph without a forbidden induced subgraph}
\author{Jacob Fox\thanks{Department of Mathematics, Stanford University, Stanford, CA. Email: \textbf{jacobfox@stanford.edu}. Research
supported by NSF Award DMS-2154129.} 
\and Rajko Nenadov\thanks{School of Computer Science, University of Auckland, New Zealand. Email: \textbf{rajko.nenadov@auckland.ac.nz}. Research supported by the New Zealand Marsden Fund.}
\and Huy Tuan Pham\thanks{Department of Mathematics, Stanford University, Stanford, CA. Email: \textbf{huypham@stanford.edu}. Research supported by a Clay Research Fellowship and a Stanford Science Fellowship.}
\footnotemark[1]}
\date{}
\begin{document}
    
\maketitle

\begin{abstract}
We initiate the systematic study of the following Tur\'an-type question. Suppose $\Gamma$ is a graph with $n$ vertices such that the edge density between any pair of subsets of vertices of size at least $t$ is at most $1 - c$, for some $t$ and $c > 0$. What is the largest number of edges in a subgraph $G \subseteq \Gamma$ which does not contain a fixed graph $H$ as an induced subgraph or, more generally, which belongs to a hereditary property $\mathcal{P}$? This provides a common generalization of two recently studied cases, namely $\Gamma$ being a (pseudo-)random graph and a graph without a large complete bipartite subgraph. We focus on the interesting case where $H$ is a bipartite graph.

We determine the answer up to a constant factor with respect to $n$ and $t$, for certain bipartite $H$ and for $\Gamma$ either a dense random graph or a Paley graph with a square number of vertices. In particular, our bounds match if $H$ is a tree, or if one part of $H$ has $d$ vertices complete to the other part, all other vertices in that part have degree at most $d$, and the other part has sufficiently many vertices. As  applications of the latter result, we answer a question of Alon, Krivelevich, and Samotij on the largest subgraph with a hereditary property which misses a bipartite graph, and determine up to a constant factor the largest number of edges in a string subgraph of $\Gamma$. The proofs are based on a variant of the dependent random choice and a novel approach for finding induced copies by inductively defining probability distributions supported on induced copies of smaller subgraphs.

\end{abstract}

\section{Introduction} 

The Tur\'an number $\mathrm{ex}(n,H)$ is the maximum number of edges in a graph on $n$ vertices which does not contain $H$ as a subgraph. The Erd\H{o}s-Stone-Simonovits theorem \cite{ES46,ES66} asymptotically determines Tur\'an numbers for fixed non-bipartite $H$. It states that $\mathrm{ex}(n,H)=(1-\frac{1}{\chi(H)-1}+o(1)){n \choose 2}$, where $\chi(H)$ is the chromatic number of $H$, and the $o(1)$ term goes to $0$ for $H$ fixed as $n \to \infty$. For $H$ bipartite, the Erd\H{o}s-Stone-Simonovits theorem gives $\mathrm{ex}(n,H)=o(n^2)$, and it is further known that there is $\varepsilon=\varepsilon(H)>0$ such that $\mathrm{ex}(n,H) \leq n^{2-\varepsilon}$. However, it remains a challenging open problem to give good estimates of Tur\'an numbers of bipartite graphs. A prominent case is complete bipartite graphs $K_{s,r}$ with $s \leq r$, for which the K\"ov\'ari-S\'os-Tur\'an theorem \cite{KST} gives $\mathrm{ex}(n,K_{s,r})=O_r(n^{2-1/s})$. This bound is known to be sharp up to the constant factor if $s=2$ or $s=3$, or $r$ is sufficiently large as a function of $s$ (see  \cite{ARS,KRS,Bukh}). 

A property of graphs is \emph{hereditary} if it is closed under taking induced subgraphs. A hereditary property of graphs is nontrivial if it contains all empty graphs and misses some graph. Throughout we assume all hereditary properties of graphs are nontrivial. For a hereditary property $\mathcal{P}$ of graphs and a graph $\Gamma$, let $\mathrm{ex}(\Gamma,\mathcal{P})$ be the maximum number of edges of a subgraph $G \subseteq \Gamma$ that belongs to $\mathcal{P}$. Note that in the  special case that $\Gamma$ is the complete graph $K_n$ and $\mathcal{P}$ is the property of not containing $H$ as a subgraph, the definition coincides with that of $\mathrm{ex}(n,H)$. 

Alon, Krivelevich, and Samotij \cite{AKS23} proved the following extension of the Erd\H{o}s-Stone-Simonovits theorem: For $0<p \leq 1$ fixed, if $k=k(\mathcal{P}) \geq 2$ is the minimum chromatic number of a graph that does not belong to $\mathcal{P}$, then the Erd\H{o}s-Renyi random graph $\Gamma = G(n, p)$ satisfies $\mathrm{ex}(\Gamma,\mathcal{P})=(1-\frac{1}{k-1}+o(1))p{n \choose 2}$ with high probability. While the proof they give does not extend from Erd\H{o}s-Renyi graphs to quasirandom graphs, a different proof based on Szemeredi's regularity lemma mentioned in the concluding remarks of their paper does extend to quasirandom graphs, but gives weak quantitative bounds. They left as an open problem to improve the bound $\mathrm{ex}(\Gamma,\mathcal{P})=o(n^2)$ in the case $k=2$ to 
$\mathrm{ex}(\Gamma,\mathcal{P}) \leq n^{2-\varepsilon}$ for some $\varepsilon=\varepsilon(\mathcal{P})>0$. We verify that this is the case. 

Our results hold for a much larger class of graphs $\Gamma$. We say that a graph $\Gamma$ is \emph{$(c,t)$-sparse} if for every pair of vertex subsets $A,B \subset V(G)$ (not necessarily disjoint) with $|A|,|B| \geq t$, we have $e(A,B) \le (1-c)|A||B|$. Here, $e(A, B)$ counts the number of pairs $(a, b) \in A \times B$ such that $\{a,b\}$ is an edge in $\Gamma$. In particular, any edge which lies in $A \cap B$ is counted twice. Roughly, a graph is $(c, t)$-sparse if it doesn't have large, almost complete bipartite subgraphs. If a graph is $(c,t)$-sparse, then all of its subgraphs are as well. 

Generalizing the question of Alon, Krivelevich, and Samotij \cite{AKS23}, we propose the following  Tur\'an-type problem. 
\begin{problem}\label{prop:main}
    Given a hereditary property $\mathcal{P}$ which misses a bipartite graph and a $(c, t)$-sparse graph $\Gamma$, determine $\mathrm{ex}(\Gamma, \mathcal{P})$.
\end{problem}
Note that, if $\Gamma=G(n,p)$ with $0<p<1$ fixed, with high probability $\Gamma$ is $(c,t)$-sparse for any fixed $c<1-p$ and any $t \geq C'\log n$ where $C'=C'(p,c)$ is sufficiently large. Problem \ref{prop:main} thus entails the study of Tur\'an-type problems in dense random (and pseudorandom) graphs. Problem \ref{prop:main} also entails the case $\Gamma$ is $K_{t,t}$-free (taking $c=1/t^2$), which has been the subject of recent progress \cite{AZ, BBCD, GM, HMST,MST1,MST2,Zim}. We refer the reader to Subsection \ref{subsec:no-complete-bip} for further details on the relation between our work and these results.  

The following extension of the K\"ov\'ari-S\'os-Tur\'an theorem is our first main result. We say a bipartite graph $H = (A \cup B, E)$ is \emph{$d$-bounded} if there are $1 \le \ell \le d$ vertices in $A$ which are complete to $B$, and all other vertices in $A$ have degree at most $d$. 

\begin{theorem}\label{thm:drc}
Let $H$ be a $d$-bounded bipartite graph and $c > 0$. Then there are constants $C$ and $\alpha > 0$ such that if $\Gamma$ is a $(c,t)$-sparse graph with $n$ vertices, for some $t$, then any subgraph $G \subseteq \Gamma$ with at least $Ct^{1/d}n^{2-1/d}$ edges contains $\alpha  q^{e(H)}n^{v(H)}$ copies of $H$ which are induced copies in $\Gamma$, where $q = 2e(G) / n^2$.
\end{theorem}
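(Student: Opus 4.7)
The plan is to count ordered induced copies of $H$ in two stages: first, locate many compatible ``skeletons'' $(A_0^*, B^*)$ realising the complete bipartite part between $A_0$ and $B$ by a rearrangement/Jensen argument, then extend each skeleton by embedding $A_1$ into common $G$-neighborhoods, using $(c,t)$-sparseness to realise non-edges. Write $H = A_0 \cup A_1 \cup B$, where $A_0$ consists of the $\ell\le d$ vertices complete to $B$, $A_1=A\setminus A_0$ has size $r$ with every $a\in A_1$ having $H$-neighborhood $B_a\subseteq B$ of size $k_a\le d$, and $|B|=m$; so $e(H)=\ell m+\sum_{a}k_a$ and $v(H)=\ell+m+r$.

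Call $(A_0^*,B^*)\in V(\Gamma)^\ell\times V(\Gamma)^m$ \emph{compatible} if its entries are distinct, $A_0^*$ and $B^*$ are both $\Gamma$-independent, and $B^*\subseteq N_G(A_0^*)^m$. The double-counting identity
\[
\sum_{A_0^*\in V(\Gamma)^\ell}|N_G(A_0^*)|^m\;=\;\sum_{B^*\in V(\Gamma)^m}|N_G(B^*)|^\ell
\]
counts pairs with every edge between $A_0^*$ and $B^*$ present in $G$, and Jensen applied to a uniformly random $A_0^*$ lower-bounds this by $q^{\ell m}n^{\ell+m}$. To enforce the $\Gamma$-independence conditions I use the following consequence of $(c,t)$-sparseness: for every $X\subseteq V(\Gamma)$ with $|X|\ge t$ and every $s\ge 1$, the number of $\Gamma$-independent $s$-tuples in $X$ is at least $c_0(c,s)|X|^s$. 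This is proved by iterating the fact that fewer than $t$ vertices $y\in V(\Gamma)$ can satisfy $|N_\Gamma(y)\cap X|\ge(1-c/2)|X|$, since otherwise such $y$ together with $X$ would violate $(c,t)$-sparseness. Applying this inside each $N_G(B^*)$ and $N_G(A_0^*)$, and absorbing the $|N_G|<t$ contributions (a lower-order term provided $C$ is large enough, since $q^d n\ge C^d t$), yields at least $c_1 q^{\ell m}n^{\ell+m}$ compatible pairs.

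For each compatible pair, I embed $A_1$ greedily. For $a\in A_1$ the image $u=\phi(a)$ must belong to $N_G(B_a^*)$ and be $\Gamma$-non-adjacent to the bounded-size forbidden set $Y_a$ of images of $A_0\cup(B\setminus B_a)$ and of the previously embedded $A_1$-vertices. A further restriction of the compatible pairs, to those also satisfying $|N_G(B_a^*)|\ge c'q^{k_a}n$ for every $a$, at only a constant-factor cost via a Markov/Jensen bound applied to each $k_a$-subset of $B^*$, ensures the pool is large enough. Iteratively deleting the $\Gamma$-neighbors of each $y\in Y_a$ from $N_G(B_a^*)$ then loses at most a $(c/2)^{|Y_a|}$-factor by the one-sided sparseness fact applied to each intermediate pool, leaving at least $c_2 q^{k_a}n$ valid choices for $u$. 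Multiplying over $A_1$ gives $q^{\sum k_a}n^r$ extensions per compatible pair, for a total of at least $\alpha q^{e(H)}n^{v(H)}$ induced copies.

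The main technical obstacle is maintaining typicality of $Y_a$ against the sequence of shrinking candidate pools: $(c,t)$-sparseness only controls atypical vertices for a \emph{single} pool of size $\ge t$, but as the pool shrinks a previously typical $y$ may become atypical. I handle this by a cleanup in the compatible-pair step: retain only those pairs whose entries avoid the $<t$ vertices atypical for each of the $O_H(1)$ candidate pools arising later (each parameterised by a subset of $B^*$ of size at most $d$ together with a subset of the eventual $Y_a$). Since each cleanup removes $<t$ vertices from a set of size at least $q^\ell n\gg t$, the cumulative cost is only a constant factor, absorbed into $\alpha$.
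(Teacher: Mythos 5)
Your proposal is essentially the one-shot dependent-random-choice count (choose $A_0^*$ uniformly, use the double-counting/Jensen identity, embed $B$ in the common neighbourhood), patched with a posterior ``cleanup.'' The paper explicitly warns that this does not transfer to the induced setting; their argument instead selects the $\ell$ vertices of $A_0$ one at a time, with a degree-regularization step (their Lemma~\ref{lem:reduce}) before each selection, precisely so that the set from which the next $A_0$-vertex is drawn is itself a set where the $(c,t)$-sparseness gives leverage (see Lemmas~\ref{lemma:drc_count_1} and~\ref{lem:count}). Your sketch omits both of these ingredients, and I believe the gap is real.

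Concretely, the step I cannot make work is enforcing $\Gamma$-independence of $A_0^*$ when $\ell \ge 2$ and $|B| = m > d$. To turn the weighted sum $\sum_{A_0^*}|N_G(A_0^*)|^m$ into a count of compatible pairs with $A_0^*$ independent, you apply the one-sided sparseness fact inside $N_G(B^*)$; but that fact only bites when $|N_G(B^*)|$ is at least a constant multiple of $t$. The tuples $B^*$ with $|N_G(B^*)| < t$ can carry up to $n^m t^\ell$ of the mass $\sum_{B^*}|N_G(B^*)|^\ell$, and absorbing this as a lower-order term requires $t \lesssim q^m n$. The hypothesis only guarantees $q \ge C(t/n)^{1/d}$, hence $q^m n \ge C^m t^{m/d}n^{1-m/d} = C^m t \cdot (t/n)^{m/d-1}$, which is $\ll t$ whenever $m>d$ and $t = o(n)$. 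Your own absorption estimate ``$q^d n \ge C^d t$'' is the right one for the small pools $N_G(B_a^*)$ with $|B_a^*|\le d$ and for $N_G(A_0^*)$ with $|A_0^*| = \ell \le d$, but not for $N_G(B^*)$, where the exponent is $m$. The same scale mismatch undermines the cleanup: the atypical set for a pool $N_G(B_a^*)$ has fewer than $t$ elements, but conditionally on $B^*$ the entries of $A_0^*$ are confined to $N_G(B^*) \subseteq N_G(B_a^*)$, and $|N_G(B^*)|$ can itself be of order $t$ or smaller, so the conditional probability of hitting the atypical set need not be small. (There is also a softer issue: without the regularization of Lemma~\ref{lem:reduce}, the Markov-type argument that a typical $u$ is ``good'' — your assertion that the $|N_G(B_a^*)|\ge c'q^{k_a}n$ restriction costs only a constant factor — is not automatic when the degree distribution in $U$ is very skewed; this is exactly what the $\delta$-reduction is for.) The paper's one-vertex-at-a-time induction on $\ell$, recursing into $N_{G'}(u)\times (U'\setminus N_\Gamma(u))$, avoids ever looking at $N_G(B^*)$ and thereby sidesteps all of this.
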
 

The bound on the number of edges in \Cref{thm:drc} is best possible up to the constant factor $C$ in various cases as discussed in the next subsection, thus giving a complete answer (up to the constant factor) to Problem \ref{prop:main} in these cases. Also, the bound on the number of induced copies of $H$ in $G$ is best possible up to the constant factor $\alpha$ if for instance $G$ is a random graph with edge density $q$. 

If $H = (A \cup B, E)$ is such that all vertices in $A$ have degree at most $d$, then $H$ is a subgraph of a $d$-bounded graph. Thus we immediately get the following corollary, albeit without a tight bound on the number of copies.

\begin{corollary} \label{cor:deg_d}
    Let $H$ be a bipartite graph with all vertices in one part having degree at most $d$. If $\Gamma$ is a $(c, t)$-sparse graph with $n$ vertices, for some $c > 0$ and $t$, then there exists $C = C(c, H) > 0$ such that any subgraph $G \subseteq \Gamma$ with at least $Ct^{1/d}n^{2 - 1/d}$ edges contains a copy of $H$ which is induced in $\Gamma$.
\end{corollary}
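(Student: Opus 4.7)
The plan is to reduce the corollary to \Cref{thm:drc} by embedding $H$ into an auxiliary $d$-bounded bipartite graph $H'$ whose induced copies in $\Gamma$ will contain the desired induced copies of $H$.

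Write $H = (A \cup B, E)$ with every vertex of $A$ having degree at most $d$, and construct $H'$ by adjoining a single new vertex $a^{\ast}$ to the $A$-side and joining $a^{\ast}$ to every vertex of $B$. Then $H'$ is $d$-bounded with $\ell = 1$: the new vertex $a^{\ast}$ is complete to $B$, while every other vertex on the $A$-side still has degree at most $d$. The key point is that $H'[A \cup B] = H$, so every non-edge of $H$ between vertices of $V(H)$ is also a non-edge of $H'$. Consequently, any induced copy of $H'$ in $\Gamma$ whose edges lie in $G$, when restricted to the $v(H)$ vertices playing the role of $V(H)$, yields an induced copy of $H$ in $\Gamma$ with edges in $G$.

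I would then apply \Cref{thm:drc} to $H'$ with the given $c$, which supplies constants $C'$ and $\alpha > 0$ such that any $G \subseteq \Gamma$ with at least $C' t^{1/d} n^{2 - 1/d}$ edges contains at least $\alpha q^{e(H')} n^{v(H')}$ induced copies of $H'$ in $\Gamma$, where $q = 2e(G)/n^2 \geq 2C' t^{1/d} n^{-1/d}$. Using the crude bounds $e(H') \leq |B| + d|A|$ and $v(H') = |A| + |B| + 1$, a direct calculation gives $v(H') - e(H')/d \geq 1 + |B|(1 - 1/d) \geq 1$, so the guaranteed count of induced copies of $H'$ is at least $\alpha (2C')^{e(H')} t^{e(H')/d} n$. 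Taking $C$ sufficiently large relative to $c$ and $H$ then forces this count to exceed $1$, producing the required induced copy of $H$.

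There is essentially no technical obstacle once the auxiliary graph $H'$ has been identified; the corollary is a direct consequence of \Cref{thm:drc} applied to the one-vertex extension $H'$ of $H$. The only point to verify carefully is that an induced copy of $H'$ in $\Gamma$ really does restrict to an induced copy of $H$ in $\Gamma$, which is immediate from the identity $H'[A \cup B] = H$.
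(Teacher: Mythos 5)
Your proof is correct and takes essentially the same route the paper indicates in the sentence preceding the corollary: embed $H$ as an \emph{induced} subgraph of a $d$-bounded graph $H'$ by adjoining one vertex on the low-degree side complete to $B$, and then apply \Cref{thm:drc} to $H'$. Your explicit check that $v(H') - e(H')/d \ge 1$ (so that the guaranteed count $\alpha q^{e(H')} n^{v(H')}$ exceeds $1$ once $C$ is large enough) is the right quantitative step, and your emphasis that $H$ must be an induced, not merely ordinary, subgraph of $H'$ is a small but genuine precision the paper glosses over.
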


A result of Alon, Krivelevich, and Sudakov \cite{AKSudakov} and Furedi \cite{Furedi} states that if $H$ satisfies the condition in Corollary \ref{cor:deg_d}, then $\mathrm{ex}(n, H) = O(n^{2 - 1/d})$. Therefore, Corollary \ref{cor:deg_d} can be seen as an induced analogue. Moreover, the counting part of Theorem \ref{thm:drc} provides an induced analogue of a result of Conlon, Fox, and Sudakov \cite{CFS10} on Sidorenko's conjecture. The proof of Theorem \ref{thm:drc} uses a variant of the dependent random choice motivated by the one from \cite{CFS10}, however we encounter a number of challenges that do not arise in the non-induced setup. As in \cite{CFS10}, we construct appropriate embeddings of the vertices in $B$ of $H$ by selecting vertices in the common neighborhood of an $\ell$-tuple of vertices. We then consider, for each vertex $a$ in $A$ of $H$ with degree at most $d$, a large set of potential options for embedding $a$. However, compared to the non-induced setting, in order to guarantee that we have the correct number of options viable for creating an induced copy of $H$, we need to make several crucial modifications of the dependent random choice argument. In particular, we cannot obtain the desired guarantee when selecting uniformly at random $\ell$ vertices and looking for the embedding of $B$ in their common neighborhood. Instead, an important idea in the proof is to choose each of the $\ell$ vertices one at a time, with each selection preceded by a degree regularization procedure.

Note that Corollary \ref{cor:deg_d} solves the problem of Alon, Krivelevich, and Samotij \cite{AKS23}. Indeed, if $\Gamma=G(n,p)$ with $0<p<1$ fixed, with high probability the number of edges of $G$ is  $(1+o(1))p{n \choose 2}$, and, as earlier remarked, $\Gamma$ is $(c,t)$-sparse for any fixed $c<1-p$ and any $t \geq C'\log n$ where $C'=C'(p,c)$ is sufficiently large. If $k(\mathcal{P})=2$, then there is a bipartite graph $H$ not in $\mathcal{P}$, and for $d$ being, say, the maximum degree of $H$, we can apply Corollary \ref{cor:deg_d} to deduce  
$\mathrm{ex}(\Gamma,\mathcal{P}) \leq n^{2-1/d}$. In particular, $\mathrm{ex}(\Gamma, \mathcal{P}) \le n^{2 - \eps}$ for some $\varepsilon=\varepsilon(\mathcal{P}) > 0$. We note that Clifton, Liu, Mattos and Zheng \cite{CLMZ} give an independent solution of the problem of Alon, Krivelevich and Samotij, building on the approach of \cite{BBCD}. One main difference is that in the present work, we are interested in deriving a sharp exponent $\eps(\cP)$ for various properties $\cP$. 

The rest of the introduction is organized as follows. In Subsection \ref{subsec:opt}, we discuss several cases in which the lower bound on the number of edges of $G$ in Theorem \ref{thm:drc} is best possible. In Subsection \ref{subsec:tree}, we discuss our result giving tight bounds on the size of a largest subgraph of a $(c,t)$-sparse graph forbidding an induced tree. In comparison to the ordinary Tur\'an problem for trees, the situation here becomes drastically more involved.  In Subsection \ref{subsec:stringsub}, we discuss an application of Theorem \ref{thm:drc} that obtains tight bounds on the size of a largest string subgraph of a $(c,t)$-sparse graph. We then discuss the relationship with the special case of forbidding a large complete bipartite subgraph in Subsection \ref{subsec:no-complete-bip}, which has been the subject of much recent interest. %

\subsection{Optimality of Theorem \ref{thm:drc}} \label{subsec:opt}

For a graph $H$, let $\mathcal{P}_H$ be the hereditary graph property of not containing $H$ as an induced subgraph. For $\Gamma$ a graph with $p{n \choose 2}$ edges, by averaging, we have $\mathrm{ex}(\Gamma,\mathcal{P}_H) \geq p \, \mathrm{ex}(n,H)$. Analogous to the case where $H$ is not bipartite, it is natural to suspect that this is a good bound, at least for $\Gamma = G(n,p)$. Surprisingly, we show that this is not the case. Our general lower bound on $\mathrm{ex}(\Gamma,\mathcal{P}_H)$ is related to the chromatic number of the complement of $\Gamma$. In what follows, we are mainly concerned with $H = K_{s,r}$ as $\mathrm{ex}(n, K_{s,r})$ is well understood for $r$ sufficiently large in $s$.

\begin{lemma} \label{lowercor}
 Let $2 \leq s \leq r$ be integers with $r > 2$. For every graph $\Gamma$, we have  
 $$
 \mathrm{ex}(\Gamma,\mathcal{P}_{K_{s,r}}) \geq \frac{1}{2}e(\Gamma)\mathrm{ex}(\chi(\overline{\Gamma}),K_{s,r})/{\chi(\overline{\Gamma}) \choose 2}.
 $$
\end{lemma}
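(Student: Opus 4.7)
The plan is to realize the lemma by combining two kinds of edges into one subgraph $G \subseteq \Gamma$: all $\Gamma$-edges inside certain cliques of a fixed partition, together with all $\Gamma$-edges between certain pairs of these cliques. Concretely, set $N = \chi(\overline{\Gamma})$ and fix a proper coloring of $\overline{\Gamma}$ with $N$ colors; its color classes form a partition $V(\Gamma) = V_1 \cup \cdots \cup V_N$ in which each $V_i$ is a clique of $\Gamma$. Let $F^{*} \subseteq \binom{[N]}{2}$ be an extremal $K_{s,r}$-free graph with $e(F^{*}) = \ex(N, K_{s,r})$, and let $F \subseteq F^{*}$ be a bipartite subgraph extracted by taking a max cut of $F^{*}$, so that $e(F) \ge e(F^{*})/2$ and $F$ is still $K_{s,r}$-free. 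This max-cut step is exactly where the factor $\tfrac12$ in the lemma comes from.

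I would then pick a uniformly random permutation $\pi \in S_N$ and set
\[
G \;=\; \bigcup_{i=1}^N \Gamma[V_i] \;\cup\; \bigcup_{\{i,j\}\in\pi(F)} \{uv \in E(\Gamma) : u \in V_i,\, v \in V_j\}.
\]
For each pair $\{k,\ell\} \in \binom{[N]}{2}$, we have $\Pr[\{k,\ell\}\in\pi(F)] = \rho$ where $\rho := e(F)/\binom{N}{2}$, so by linearity of expectation some $\pi$ yields $\sum_{\{i,j\}\in\pi(F)} e(V_i, V_j) \ge \rho\bigl(e(\Gamma) - \sum_i \binom{|V_i|}{2}\bigr)$. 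Adding the internal clique edges and using $\rho \le 1$ gives
\[
e(G) \;\ge\; \sum_i \binom{|V_i|}{2} + \rho\Bigl(e(\Gamma) - \sum_i \binom{|V_i|}{2}\Bigr) \;\ge\; \rho\, e(\Gamma) \;\ge\; \tfrac12\, e(\Gamma)\cdot \frac{\ex(N, K_{s,r})}{\binom{N}{2}},
\]
which is the edge lower bound demanded by the lemma.

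The core of the argument is verifying that $G \in \mathcal{P}_{K_{s,r}}$, and this is where both the bipartiteness of $F$ and the hypothesis $r>2$ enter. Suppose for contradiction that $G[X \cup Y] \cong K_{s,r}$ with $|X|=s$, $|Y|=r$, and $X,Y$ independent in $G$. Since each $V_i$ is a clique of $G$, the index sets $I_X = \{i : V_i\cap X\ne\emptyset\}$ and $I_Y = \{i : V_i\cap Y\ne\emptyset\}$ have sizes exactly $s$ and $r$. Every pair $\{i,j\}$ with $i\in I_X$, $j\in I_Y$, $i\ne j$ must be an edge of $\pi(F)$; writing $T = I_X\cap I_Y$, this forces $\pi(F)$ to contain a complete bipartite subgraph between $I_X\setminus T$ and $I_Y\setminus T$, to join every vertex of $T$ to all other vertices of $I_X\cup I_Y$, and to realize $T$ as a clique. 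Because $\pi(F)$ is bipartite and hence triangle-free, $|T|\le 2$. If $|T|=0$, then $K_{s,r}\subseteq \pi(F)$, contradicting $K_{s,r}$-freeness. If $|T|=1$, the unique vertex of $T$ lies in one part of the bipartition of $\pi(F)$, forcing $(I_X\cup I_Y)\setminus T$ entirely into the opposite part; but then the complete bipartite graph between $I_X\setminus T$ and $I_Y\setminus T$, which is nonempty since $s\ge 2$ and $r\ge 3$, would have to live inside a single part, a contradiction. If $|T|=2$, the two vertices of $T$ must lie in different parts, and the join conditions force $(I_X\cup I_Y)\setminus T$ to lie in both parts simultaneously, hence to be empty; this requires $s=r=2$, contradicting $r>2$.

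The main obstacle is precisely this induced-$K_{s,r}$-free verification: a direct use of the non-bipartite $F^{*}$ would fail because a triangle in $F^{*}$ permits $|T|\ge 2$ without forcing $K_{s,r}\subseteq F^{*}$. Indeed, the overlap $T$ can absorb up to $\min(s,r)$ vertices of the putative $K_{s,r}$, so only $K_{s,\,r-|T|}$ or $K_{s-|T|,\,r}$ is forced inside $F^{*}$, neither of which is excluded by $K_{s,r}$-freeness. Passing to a bipartite subgraph $F$ kills $|T|\ge 3$ outright, and the hypotheses $s\ge 2$, $r>2$ then eliminate the two remaining small cases, at the cost of the factor $\tfrac12$ from the max cut.
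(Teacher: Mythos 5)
Your proposal is correct and follows essentially the same approach as the paper: partition $V(\Gamma)$ into cliques via a proper coloring of $\overline{\Gamma}$, label the parts by a random copy of a bipartite subgraph $F$ (containing at least half the edges) of an extremal $K_{s,r}$-free graph, and argue that an induced $K_{s,r}$ in the resulting subgraph would force a triangle in $F$. Your case analysis on $|T| = |I_X \cap I_Y| \in \{0,1,2\}$ is somewhat more systematic than the paper's direct extraction of a triangle via vertices $v_1,v_2,v_3,w_1,w_2$, but the underlying mechanism — bipartiteness of $F$ together with $s\ge 2$ and $r\ge 3$ — is identical.
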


We also provide a bound for a general graph $H$ (see Corollary \ref{lowercor-fam}). These are proved in Section \ref{sec:lower}.

Closely related to both $\chi(\overline{\Gamma})$ and $(c,t)$-sparseness is $\omega(\Gamma)$, the size of a largest clique in $\Gamma$. We have $\chi(\overline{\Gamma}) \ge n / \omega(\Gamma)$ and if $\Gamma$ is $(c, t)$-sparse for $c$ which is not too small, then $t \ge \omega(\Gamma)$. The latter follows from $e(A, A) = (1 - 1/|A|)|A|^2$ where $A$ is a largest clique in $\Gamma$. In the case of $\Gamma = G(n,p)$, for constant $0<p<1$, it is well-known that $\omega(\Gamma) = \Theta(\log n)$ and $\chi(\overline{\Gamma}) = (1 + o(1)) n / \omega(\Gamma)$ with high probability (see \cite{bollobas88}). Moreover, standard estimates on the edge distribution of $G(n,p)$ imply it is with high probability $(c, 2 \omega(\Gamma))$-sparse, for some $c=c(p)>0$. Combining Theorem \ref{thm:drc} and Lemma \ref{lowercor} with known estimates on $\mathrm{ex}(n, K_{s,r})$, we thus obtain the following.

\begin{theorem}\label{gnp}
Let $2 \leq s \leq r$ be fixed with $r$ sufficiently large in $s$.  
If $0<p<1$ is fixed and $\Gamma=G(n,p)$, then with high probability we have 
$\mathrm{ex}(\Gamma,\mathcal{P}_{K_{s,r}})=\Theta\left(n^{2-1/s}(\log n)^{1/s}\right)$. 
\end{theorem}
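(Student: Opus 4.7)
The plan is to combine Theorem~\ref{thm:drc} for the upper bound with Lemma~\ref{lowercor} for the lower bound, feeding both with the standard asymptotics for $\omega(G(n,p))$ and $\chi(\overline{G(n,p)})$ recalled in the excerpt. To set up the upper bound, I first observe that $K_{s,r}$ is $s$-bounded: taking $A$ to be the part of size $s$, every vertex of $A$ is complete to the opposite side, so the definition is satisfied with $\ell = s = d$ and no remaining vertices in $A$. As the excerpt notes, with high probability $\Gamma = G(n,p)$ is $(c,t)$-sparse for some constant $c = c(p) > 0$ and $t = 2\omega(\Gamma) = O(\log n)$, and $e(\Gamma) = (1+o(1))\, p\binom{n}{2}$. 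Theorem~\ref{thm:drc} applied with $H = K_{s,r}$ and $d = s$ then shows that any $G \subseteq \Gamma$ with $e(G) \geq Ct^{1/s}n^{2-1/s} = O\!\left(n^{2-1/s}(\log n)^{1/s}\right)$ contains a copy of $K_{s,r}$ that is induced in $\Gamma$; because $G \subseteq \Gamma$, the non-edges of such a copy remain non-edges in $G$, so the copy is also induced in $G$. This forces $G \notin \mathcal{P}_{K_{s,r}}$ and yields $\mathrm{ex}(\Gamma,\mathcal{P}_{K_{s,r}}) = O\!\left(n^{2-1/s}(\log n)^{1/s}\right)$.

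For the matching lower bound, I invoke Lemma~\ref{lowercor}. The excerpt records that with high probability $\omega(\Gamma) = \Theta(\log n)$ and $\chi(\overline{\Gamma}) = (1+o(1))\, n/\omega(\Gamma) = \Theta(n/\log n)$. Writing $N = \chi(\overline{\Gamma})$, the sharp K\H{o}v\'ari--S\'os--Tur\'an bound valid for $r$ sufficiently large in $s$ (via the constructions of Alon--R\'onyai--Szab\'o, Koll\'ar--R\'onyai--Szab\'o, and Bukh cited in the excerpt) gives $\mathrm{ex}(N, K_{s,r}) = \Theta(N^{2-1/s})$. Plugging these estimates into Lemma~\ref{lowercor},
$$\mathrm{ex}(\Gamma,\mathcal{P}_{K_{s,r}}) \;\geq\; \tfrac{1}{2}\, e(\Gamma)\, \frac{\mathrm{ex}(N,K_{s,r})}{\binom{N}{2}} \;=\; \Theta\!\left(n^2 \cdot N^{-1/s}\right) \;=\; \Theta\!\left(n^{2-1/s}(\log n)^{1/s}\right),$$
which matches the upper bound.

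No genuine obstacle arises in this proof: Theorem~\ref{thm:drc} and Lemma~\ref{lowercor} have been calibrated so that they meet up to constants precisely when $\omega(\Gamma)$ and $\chi(\overline{\Gamma})$ are well controlled, as they are for fixed-density Erd\H{o}s--R\'enyi graphs. The one conceptually notable point is that this extremal order of magnitude exceeds the trivial averaging bound $p \cdot \mathrm{ex}(n,K_{s,r}) = \Theta(n^{2-1/s})$ by a factor of $(\log n)^{1/s}$: the construction underlying the lower bound in Lemma~\ref{lowercor} blows up an extremal $K_{s,r}$-free graph on $N = \Theta(n/\log n)$ vertices rather than averaging directly over $\Gamma$, and it is exactly this blow-up that inflates the density on each class.
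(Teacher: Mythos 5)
Your proof is correct and takes essentially the same route as the paper: the authors also obtain Theorem~\ref{gnp} by combining Theorem~\ref{thm:drc} (upper bound, using that $G(n,p)$ is $(c,t)$-sparse with $t = O(\log n)$ and that $K_{s,r}$ is $s$-bounded) with Lemma~\ref{lowercor} (lower bound, using $\chi(\overline{\Gamma}) = \Theta(n/\log n)$ and the known tightness of $\mathrm{ex}(N,K_{s,r}) = \Theta(N^{2-1/s})$ for $r$ large in $s$). Your arithmetic and the observation that a copy of $K_{s,r}$ induced in $\Gamma$ is also induced in any $G \subseteq \Gamma$ are both correct.
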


We also obtain tight bounds in the case $H = K_{s,r}$, again for $r$ sufficiently large in $s$, and $\Gamma$ is a Paley graph $P_q$ for $q$ a perfect square. For a prime power $q \equiv 1 \pmod 4$, recall that the Paley graph $P_q$ is a graph whose vertex set is the elements of a finite field with $q$ elements, and two distinct vertices are adjacent if their difference is a perfect square. The clique number of $P_q$ is known to be at most $\sqrt{q}$, with equality if $q$ is a perfect square. Unlike in the case of random graphs, we do not know that $\chi(\overline{\Gamma}) = \Theta(n / \omega(G))$, but we can find a large subgraph $\Gamma' \subseteq \Gamma$ for which this holds and then apply Lemma \ref{lowercor} with $\Gamma'$. We defer the details to Section \ref{sec:lower}. Standard estimates on character sums (see~Theorem 3.4.2 of \cite{Yip}) implies that for each $c<1/2$ there is $C=C(c)$ such that every Paley graph with $n$ vertices is $(c,C\sqrt{n})$-sparse. Combined, this gives the following.

\begin{theorem}\label{paley}
Let $2 \leq s \leq r$ be fixed with $r$ sufficiently large in $s$.
If $\Gamma$ is a Paley graph with a square number $n$ of vertices, then $\mathrm{ex}(\Gamma,\mathcal{P}_{K_{s,r}})=\Theta(n^{2-1/(2s)})$.
\end{theorem}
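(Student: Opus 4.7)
The plan is to establish matching upper and lower bounds of order $n^{2-1/(2s)}$, following the same two-pronged strategy as sketched for Theorem \ref{gnp}.

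For the upper bound, I would apply Theorem \ref{thm:drc} after noting that $K_{s,r}$ is $s$-bounded: taking the $s$-vertex side as $A$, all $\ell = s$ of its vertices are complete to the other part $B$, and there are no remaining vertices in $A$ whose degrees need to be bounded. The character-sum estimate cited in the excerpt (Theorem 3.4.2 of \cite{Yip}) gives that $\Gamma$ is $(c, C\sqrt{n})$-sparse for any fixed $c < 1/2$ and some $C = C(c)$. Applying Theorem \ref{thm:drc} with $t = C\sqrt{n}$ and $d = s$ immediately yields $\mathrm{ex}(\Gamma, \mathcal{P}_{K_{s,r}}) = O(t^{1/s} n^{2-1/s}) = O(n^{2-1/(2s)})$.

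For the lower bound, I would use Lemma \ref{lowercor}, which reduces the problem to controlling $\chi(\overline{\Gamma})$. Write $n = q^2$; since $n$ is a prime power with $n \equiv 1 \pmod 4$, necessarily $q$ is an odd prime power and $\mathbb{F}_q \subset \mathbb{F}_{q^2}$ is a subfield. For distinct $x, y \in \mathbb{F}_q$, the difference $a = x - y \in \mathbb{F}_q^*$ satisfies $a^{(q^2-1)/2} = \bigl(a^{q-1}\bigr)^{(q+1)/2} = 1$, so $a$ is a nonzero square in $\mathbb{F}_{q^2}$; hence $\mathbb{F}_q$ is a clique of size $q$ in $\Gamma$. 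By translation-invariance of the Paley graph, the $q$ additive cosets of $\mathbb{F}_q$ partition $V(\Gamma)$ into cliques, giving $\chi(\overline{\Gamma}) \le q$. The trivial lower bound $\chi(\overline{\Gamma}) \ge n/\omega(\Gamma) \ge n/\sqrt{n} = q$ matches, yielding the exact value $\chi(\overline{\Gamma}) = \sqrt{n}$. This direct subfield argument bypasses the more delicate large-subgraph reduction alluded to in the general discussion preceding the theorem.

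Plugging into Lemma \ref{lowercor}, together with $e(\Gamma) = \Theta(n^2)$, $\binom{\sqrt{n}}{2} = \Theta(n)$, and the sharp K\"ov\'ari-S\'os-Tur\'an lower bound $\mathrm{ex}(m, K_{s,r}) = \Theta(m^{2-1/s})$, valid for $r$ sufficiently large in $s$ by the constructions of \cite{ARS, Bukh}, gives
$$\mathrm{ex}(\Gamma, \mathcal{P}_{K_{s,r}}) \;\ge\; \tfrac{1}{2}\, e(\Gamma) \cdot \frac{\mathrm{ex}(\sqrt{n}, K_{s,r})}{\binom{\sqrt{n}}{2}} \;=\; \Omega\!\left(n^2 \cdot \frac{n^{1-1/(2s)}}{n}\right) \;=\; \Omega(n^{2-1/(2s)}).$$
The only step requiring genuine care is the subfield clique-cover computation of $\chi(\overline{\Gamma})$, which is clean once one recalls that $\mathbb{F}_q^*$ sits inside the index-$2$ subgroup of squares in $\mathbb{F}_{q^2}^*$. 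I do not anticipate a serious obstacle; the main point to double-check is the hypothesis $r \gg s$ needed for the matching $K_{s,r}$-Tur\'an lower bound.
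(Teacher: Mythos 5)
Your proof is correct, and for the lower bound you take a genuinely different — and cleaner — route than the paper. The paper reaches $\chi(\overline{\Gamma'}) \le 2\sqrt{n}$ for a dense subgraph $\Gamma'$ via the probabilistic covering argument of Lemma~\ref{vertextransitive}, which only uses vertex-transitivity and is designed to work for \emph{any} Cayley graph; this unavoidably loses constant factors (it covers only three quarters of the vertices, needs $k = (\ln 4)n/\omega$ cliques). You instead exploit the algebraic structure of $P_{q^2}$ directly: the norm computation $a^{(q^2-1)/2} = (a^{q-1})^{(q+1)/2} = 1$ for $a \in \mathbb{F}_q^*$ shows $\mathbb{F}_q$ is a clique, its $q$ additive cosets tile $V(\Gamma)$, and so $\chi(\overline{\Gamma}) = \sqrt{n}$ exactly. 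This bypasses Lemma~\ref{vertextransitive} entirely and gives a cleaner constant; the paper's route is more general but gives a weaker (by a factor of $2$) clique-cover bound. Your upper bound — $K_{s,r}$ is $s$-bounded with $\ell = s$ and no remaining vertices in $A$, then apply Theorem~\ref{thm:drc} with $t = C\sqrt{n}$ — matches what the paper does implicitly (and spells out when explaining how Theorem~\ref{thm:drc} and Lemma~\ref{lowercor} combine before stating Theorem~\ref{gnp}); you spell it out, which is a useful addition since the paper's proof of Theorem~\ref{paley} only writes the lower bound. The one hypothesis you flag, $r$ sufficiently large in $s$ for the matching Tur\'an lower bound $\ex(m,K_{s,r}) = \Theta(m^{2-1/s})$, is indeed the reason the theorem is stated that way, and you also implicitly need $r > 2$ for Lemma~\ref{lowercor}, which is subsumed by $r$ large.
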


\subsection{Induced trees in $(c,t)$-sparse graphs}
\label{subsec:tree}

For a fixed tree $T$, we consider the property $\mathcal{P}_T$ of not containing $T$ as an induced subgraph. The following is our second main result.

\begin{theorem} \label{thm:tree}
    For any tree $T$ and $c > 0$, there exists $C > 1$ such that the following holds. Given a $(c, t)$-sparse graph $\Gamma$ on $n$ vertices, for some $t$, every subgraph $G \subseteq \Gamma$ with at least $Ctn$ edges contains a copy of $T$ which is induced in $\Gamma$. 
\end{theorem}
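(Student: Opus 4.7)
The plan is to reduce to a minimum-degree condition on $G$ and then build the induced copy of $T$ one vertex at a time via an inductively defined sequence of probability distributions supported on induced copies of growing subtrees. Fix a large constant $K = K(T,c)$ to be determined. By iteratively deleting vertices of $G$-degree less than $Kt$, pass to an induced subgraph $G'$ of $G$ with minimum $G'$-degree at least $Kt$. This deletes at most $Ktn$ edges, so the choice $C = 2K$ in the hypothesis leaves $G'$ nonempty; moreover $G' \subseteq G \subseteq \Gamma$ inherits $(c,t)$-sparseness. Replacing $G$ by $G'$, I may assume $G$ has minimum degree at least $Kt$.

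Root $T$ at a vertex $r$ and order its vertices $v_1 = r, v_2, \ldots, v_k$ so that each $v_i$ has a unique neighbor $v_{p(i)}$ in $\{v_1, \ldots, v_{i-1}\}$; let $T_i$ be the subtree on $\{v_1, \ldots, v_i\}$. I then inductively construct probability distributions $\mu_1, \ldots, \mu_k$, where $\mu_i$ is supported on induced-in-$\Gamma$ embeddings $\phi \colon T_i \to V(\Gamma)$ using $G$-edges. Start with $\mu_1$ a suitable distribution on $V(\Gamma)$; to pass from $\mu_i$ to $\mu_{i+1}$, sample $\phi \sim \mu_i$ and then sample the image $w$ of $v_{i+1}$ from a reweighted distribution on the set of valid extensions
\[
  \mathrm{Ext}(\phi) = N_G(\phi(v_{p(i+1)})) \setminus \Big( \{\phi(v_j) : j \leq i\} \cup \bigcup_{\substack{j \leq i \\ v_j v_{i+1} \notin E(T)}} N_\Gamma(\phi(v_j)) \Big),
\]
which is precisely the set of vertices completing $\phi$ to an induced-in-$\Gamma$ embedding of $T_{i+1}$ using $G$-edges.

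The underlying tool from $(c,t)$-sparseness is that for any $A \subseteq V(\Gamma)$ with $|A| \geq t$, fewer than $t$ vertices $w$ satisfy $|N_\Gamma(w) \cap A| \geq (1 - c/2)|A|$, since $t$ such vertices would violate the $(c,t)$-sparseness between $A$ and themselves. Starting from $A_0 = N_G(\phi(v_{p(i+1)}))$, which has size $\geq Kt$ by the minimum-degree condition, and iteratively removing $N_\Gamma(\phi(v_j))$ over a chosen ordering of the $\leq k-1$ constraint vertices with $v_j v_{i+1} \notin E(T)$, this fact guarantees that, provided no $\phi(v_j)$ happens to be a $(1 - c/2)$-heavy neighbor of the intermediate set $A_{j-1}$, each step shrinks the candidate set by at most a factor of $(1 - c/2)$. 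The final candidate set then has size at least $(c/2)^{k-1} K t$, which exceeds $1$ for $K$ a sufficiently large constant depending only on $k$ and $c$.

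The main obstacle, and where the novel aspect of the paper enters, is propagating through each extension step a quasirandomness-type invariant on $\mu_i$ strong enough to ensure the above iteration succeeds with positive probability over $\phi \sim \mu_i$. Concretely, one needs that the $\mu_i$-probability that some $\phi(v_j)$ lands in the ($< t$)-sized heavy set arising in the iteration remains small, uniformly in the data of the process (which depends on $\phi$ itself through $A_0, A_1, \ldots$). The inductive definition of $\mu_i$ via reweighting toward embeddings admitting many valid extensions is designed precisely to preserve this invariant, which is the key reason the proof succeeds with a linear bound $Ctn$ rather than a bound controlled by the maximum degree of $T$ as in \Cref{thm:drc}. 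Once the chain $\mu_1 \to \cdots \to \mu_k$ has been built, any embedding in the support of $\mu_k$ provides the desired induced-in-$\Gamma$ copy of $T$ with edges in $G$.
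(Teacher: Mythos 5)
Your high-level plan — reduce to minimum degree, build a chain of probability distributions supported on induced copies of growing subtrees, and preserve a quasirandomness invariant across extension steps — is the right template. However, your one-vertex-at-a-time extension $\mu_i \to \mu_{i+1}$ has a genuine gap that the paper's proof is specifically designed to avoid, and the invariant you invoke is not spelled out or shown to be maintainable.

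The difficulty is directional. When you attach $v_{i+1}$ to its parent $v_{p(i+1)}$, the candidate set $A_0 = N_G(\phi(v_{p(i+1)}))$ is determined by a vertex placed at step $p(i+1)$, whereas the constraining vertices $\phi(v_j)$ whose $\Gamma$-neighborhoods you must avoid include vertices placed \emph{earlier} than $v_{p(i+1)}$. At the time those $\phi(v_j)$ were sampled, the set $A_0$ (and hence its $\lesssim t$-element set of heavy vertices) did not yet exist, so there is no way to ``reweight toward embeddings admitting many valid extensions'' at step $j$ to protect against a set determined at a later step. Concretely, for $T$ a path $v_1 - \cdots - v_k$, adding $v_{i+1}$ requires $\phi(v_{i-1})$ not to be heavy for $N_G(\phi(v_i))$, but $\phi(v_{i-1})$ was placed before $\phi(v_i)$. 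Establishing the required invariant would amount to controlling the \emph{reverse} conditional distribution of $\phi(v_j)$ given later placements — a much stronger statement than anything your construction visibly supplies, and one that cannot be enforced simply by reweighting $\mu_i$.

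The paper's proof circumvents exactly this. Rather than adding one vertex at a time, it uses a $T$-peeling that identifies two leaves $u, w$ removable from each subtree $T'$, sets $R = T' - \{u,w\}$, samples a base copy $\rho \sim \lambda_R$, and then samples the two extensions $\rho_u$, $\rho_w$ \emph{independently} conditional on $\rho$. Because both new vertices $x_u, x_w$ are fresh relative to $R$, all of their interactions with $\rho(R)$ are already certified by the $c$-uniqueness of $\rho_u$ and $\rho_w$ (the concrete form of the ``invariant'' — see the definitions of $c$-unique and $(K,\eps,\kappa)$-extends). The only new interaction is between $x_u$ and $x_w$ themselves, and their conditional independence lets Lemmas~\ref{lemma:good_vertices} and~\ref{lemma:good_vertices_2} handle it via the $(c,t)$-sparseness and bounded atoms. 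The existence of a compatible peeling (Lemma~\ref{lemma:peeling}) is what makes the induction close: the $(K,\eps,\kappa)$-extension relations between $\lambda_{T'}$, $\lambda_{R_u}$, $\lambda_{R_w}$, and $\lambda_R$ propagate precisely because $u \in \nu(R_u)$ and $w \in \nu(R_w)$. To complete your proposal you would need to supply something playing this role; as written, the phrase ``quasirandomness-type invariant'' names the missing ingredient without providing it.
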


Assuming that $T$ contains at least two edges, the disjoint union of cliques is an induced-$T$-free graph. A graph $\Gamma$ that is a disjoint union of cliques of size $t/4$ is $(1/2, t)$-sparse, thus the bound on the number of edges required in Theorem \ref{thm:tree} is best possible. For $\Gamma=G(n,p)$, by taking a subgraph which is a disjoint union of cliques, it follows that $\mathrm{ex}(\Gamma,\mathcal{P}_{T})=\Omega(n\log_{1/p}n)$ with high probability. Indeed, we have $\chi(\overline{\Gamma}) = O(n/\log_{1/p} n)$ with high probability (see \cite{bollobas88}), which gives a spanning subgraph of $\Gamma$ which is a disjoint union of $ O(n/\log_{1/p} n)$ cliques. A standard application of convexity shows that such a subgraph has at least  $\Omega(n\log_{1/p}n)$ many edges. In particular, this shows $\ex(\Gamma,\mathcal{P}_T) = \Theta(n\log n)$ when $0<p<1$ is a constant. Similarly, when $\Gamma$ is a Paley graph $P_q$ for $q$ a perfect square, the same argument together with Lemma \ref{vertextransitive} shows that we have $\ex(\Gamma,\mathcal{P}_T) = \Theta(n^{3/2})$. 

A standard way of proving that the Tur\'an number of a fixed tree $T$ is $\Theta(n)$ proceeds by passing on to a subgraph with sufficiently large (constant) minimum degree which depends on $T$, and then constructing a copy of $T$ greedily. However, we face significant challenges in guaranteeing that the tree is induced under the $(c,t)$-sparse condition particularly when the graph $G$ has irregular degree distribution. In order to address this, our proof of Theorem \ref{thm:tree} follows from a fairly involved procedure to glue together \emph{distributions} $\lambda_{T'}$ supported on induced copies of subtrees $T'$ of $T$. This is our main technical contribution, thus we give a brief overview of the strategy.

The crucial property satisfied by the distributions $\lambda$ is that, for a tree $T'$ obtained by removing a leaf from $T$, $\lambda_{T}$ and $\lambda_{T'}$ satisfy appropriate consistency relations. The main step is then to inductively construct, for subtrees $T_0$ of $T$, the measure $\lambda_{T_0}$ from $\lambda_{T'}$ for smaller subtrees $T'$ of $T_0$. We identify two subtrees $T_1,T_2$ of $T_0$ such that each $T_1,T_2$ is obtained from $T_0$ by removing a (different) leaf. Let $T'$ be the intersection of $T_1$ and $T_2$. We first sample an embedding $\rho_{T'}$ of $T'$ from $\lambda_{T'}$. Granted that $\rho_{T'}$ lies in a good event, we consider two independent extensions of $\rho_{T'}$ to embeddings of $T_1$ and $T_2$, sampled according to $\lambda_{T_1}$ and $\lambda_{T_2}$ conditional on the image of $T'$ being $\rho_{T'}$. If combining both extensions gives us a good embedding of $T_0$, then we successfully obtain a sample defining $\lambda_{T_0}$. Using consistency relations between $\rho_{T'}$ and $\rho_{T_1}$, and between $\rho_{T'}$ and $\rho_{T_2}$, and the $(c,t)$-sparse assumption, we can show that we successfully obtain a sample with a good probability. We then close the induction by verifying that the consistency relations hold between the defined distribution $\lambda_{T_0}$, and the distributions $\lambda_{T_1}$ and $\lambda_{T_2}$.  

\subsection{Largest induced string subgraphs}\label{subsec:stringsub}

Returning to the original set-up of Alon, Krivelevich, and Samotij, beyond studying the case of a single forbidden induced subgraph, estimating $\textrm{ex}(\Gamma,\mathcal{P})$ remains an interesting problem when not all bipartite graphs belong to $\mathcal{P}$.

A string graph is an intersection graph of arcwise connected sets in the plane. Let $\mathcal{S}$ denote the family of string graphs. Note that not every graph is a string graph. Indeed, subdividing each edge of a nonplanar graph at least once produces a graph which is not a string graph. In particular, subdividing each edge of the complete graph on five vertices yields a bipartite graph on fifteen vertices which is not a string graph. A nontrivial upper bound on $\textrm{ex}(\Gamma,\mathcal{S})$ for $\Gamma$ a $(c,t)$-sparse graph follows from Theorem \ref{thm:drc}. However, we prove the following much better upper bound using an additional property of string graph, which roughly says that any string graph has a dense induced subgraph with about the same average degree.  

\begin{theorem}\label{string}
For every $c > 0$ there exists $C$, such that if $\Gamma$ is a $(c,t)$-sparse graph on $n$ vertices then $\mathrm{ex}(\Gamma, \mathcal{S}) \le Ctn$.
\end{theorem}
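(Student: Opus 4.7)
The plan is to argue by contradiction. Suppose $G \subseteq \Gamma$ is a string subgraph with $e(G) > Ctn$ for a constant $C = C(c)$ to be chosen sufficiently large. I aim to exhibit two vertex subsets of size at least $t$ in $\Gamma$ whose edge density exceeds $1-c$, directly contradicting the $(c,t)$-sparseness hypothesis.

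The crucial input is the structural feature of string graphs referenced in the introduction: any string graph with $N$ vertices and $M$ edges contains an induced subgraph on $N^* = \Theta(M/N)$ vertices whose edge density is bounded below by an absolute constant $\delta > 0$. Such a densification is a standard consequence of the sublinear separator theorems for string graphs of Fox--Pach (and Lee) --- iterating the separator decomposition and pigeonholing on the resulting pieces, one locates a block in which the original average degree is realized as constant edge density on a vertex set of comparable order. Applied to $G$, this produces an induced subgraph $G^* \subseteq G$ on $n^* = \Theta(Ct)$ vertices with $e(G^*) \geq \delta \binom{n^*}{2}$.

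Since $G^*$ is an induced subgraph of the string graph $G$, it is itself a string graph. I would then invoke the positive-density biclique theorem for string graphs (Fox--Pach): there exists $\eta = \eta(\delta) > 0$ such that any string graph on $n^*$ vertices with edge density at least $\delta$ contains a complete bipartite subgraph $K_{s,s}$ with $s \geq \eta n^*$. Combining with $n^* = \Theta(Ct)$, one obtains $s \geq \eta \cdot \Theta(Ct)$, which exceeds $t$ once $C = C(c)$ is chosen large enough compared to the absolute constants $\delta$ and $\eta$ from the previous two steps.

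The resulting biclique $K_{s,s}$ with $s \geq t$ sits inside $\Gamma$, so its two parts form vertex subsets of size at least $t$ with edge density $1 > 1-c$, contradicting $(c,t)$-sparseness. The main obstacle is pinning down the precise quantitative form of the densification step: the hidden constants in $\Theta(M/N)$ and the lower bound $\delta$ on the density must be genuinely absolute, so that they combine cleanly with the positive-density biclique theorem without degrading the dependence of $C$ on $c$. Once the densification is correctly quantified, the biclique extraction and the concluding violation of $(c,t)$-sparseness follow immediately.
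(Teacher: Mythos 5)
Your argument is essentially correct, and it does take a genuinely different route to the contradiction. Both your proof and the paper's begin the same way: use the Fox--Pach--Suk densification lemma (Lemma~\ref{denseinduced}) to pass from the putative extremal string subgraph $G$ of average degree $d > Ct$ to an induced subgraph $G^*$ of $G$ on $n^* = \Theta(d)$ vertices whose edge density is bounded below by an absolute constant. (A small correction: $n^*$ is $\Theta(d)$, not $\Theta(Ct)$ --- $d$ can be much larger than $Ct$ --- but since $d \ge 2Ct$ the lower bound $n^* \ge \Omega(Ct)$ is all that your argument needs, and the conclusion is unaffected.) From that point the two proofs diverge. The paper observes that $G^*$ is still a $(c,t)$-sparse string graph, hence omits the $1$-subdivision of $K_5$ as an induced subgraph, and then feeds $G^*$ into Corollary~\ref{cor:deg_d} (the induced degenerate-bipartite Tur\'an bound derived from the paper's main dependent-random-choice theorem) with $d=2$; the resulting density bound $O(\sqrt{t/n^*})$ contradicts the constant density of $G^*$ once $C$ is large. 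You instead invoke the Fox--Pach positive-density bipartite Ramsey theorem for string graphs to extract a complete bipartite subgraph $K_{s,s}$ with $s = \Omega_{\delta}(n^*) \ge t$ directly inside $G^* \subseteq \Gamma$, which immediately violates $(c,t)$-sparseness since the two parts have edge density $1 > 1-c$. Your route is more self-contained as a standalone argument about string graphs and, in fact, only uses the consequence that $\Gamma$ is $K_{t,t}$-free (which is weaker than $(c,t)$-sparseness); the paper's route has the virtue of staying within the machinery developed in Section~\ref{sec:main}, which is the point of the section, and it generalizes with no change to any hereditary family that admits the densification phenomenon and omits some fixed bipartite graph. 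One caution about your proof as written: the quantitative form of the Fox--Pach theorem you cite should be stated precisely (the best-known bounds have a constant $\eta$ that depends on $\delta$ in a more than linear way, but since $\delta$ is absolute after densification this is harmless); otherwise the argument is sound.
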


Similarly as in the case of trees, this bound is tight when $\Gamma$ is a random graph or a Paley graph with a square number of vertices. As an application of Theorem \ref{string}, we discuss in Section \ref{sec:string} the determination of $\ex(\Gamma,\calP)$ for $\calP$ the property of being an incomparability graph, and $\Gamma$ a random graph or a Paley graph with a square number of vertices. 

\subsection{Forbidding large complete bipartite subgraphs}\label{subsec:no-complete-bip}

A related induced Tur\'an number was introduced and studied by Loh et al.~\cite{LTTZ}. For a positive integer $n$ and graphs $F$ and $H$, let $\textrm{ex}(n,F,H\textrm{-ind})$ be the maximum number of edges in a graph on $n$ vertices which does not contain $F$ as a subgraph and $H$ as an induced subgraph. If $F$ is bipartite with parts of size $r \le s$ and has $m$ edges, then any graph not containing $F$ as a subgraph is $(c, t)$-sparse with $c = 1/(2m)$ and $t = r + s$. Indeed, suppose that $\Gamma$ is not $(c,t)$-sparse and let $(A, B)$ be a witness for this. Then by first randomly embedding a smaller part of $F$ into $A$, and then randomly embedding a larger part into the remaining free vertices in $B$ (of which there are at least $|B| - r \ge |B|/2$), we have that a particular edge of $F$ is not present in such an embedding with probability less than $2c = 1/m$, thus $\Gamma$ contains $F$. 

Furthermore, if $F=K_{t,t}$ is a balanced complete bipartite graph, then it is not difficult to show that not containing $K_{t,t}$ as a subgraph is equivalent to being $(c,t)$-sparse with $c=1/t^2$. Several very recent papers \cite{AZ, BBCD,GM,HMST, Zim} prove upper bounds on $\textrm{ex}(n,K_{t,t},H\textrm{-ind})$ for various graphs $H$. The bounds from \cite{HMST} establish 
that $\textrm{ex}(n,K_{t,t},H\textrm{-ind})=t^{O(|V(H)|)}n^{2-1/d}$ if $H$ is bipartite and every vertex in one part of $H$ has maximum degree at most $d$. The proof of Theorem \ref{thm:drc} shows that we may take $C=c^{-O(|V(H)|^2)}$, and hence implies that $\textrm{ex}(n,K_{t,t},H\textrm{-ind})=t^{O(|V(H)|^2)}n^{2-1/d}$ if $H$ is $d$-bounded. While it is plausible that the methods from \cite{HMST} could also be extended to the case of $d$-bounded graphs, the main message here is that our bounds obtained for the more general setup of $(c,t)$-sparse graphs also give reasonably good bounds in the special case of $K_{t,t}$-free graphs.

Note that, for $0<p<1$ fixed, the Erd\H{o}s-Renyi random graph $G(n,p)$ almost surely does not contain $K_{t,t}$ as a subgraph for a $t$ which is logarithmic in $n$. Thus the main result of Hunter, Milojevi\'c, Sudakov, and Tomon \cite{HMST} gives a weaker upper bound (off by a polylogarithmic in $n$ factor) on $\mathrm{ex}(\Gamma, \mathcal{P}_{K_{s,r}})$ with $\Gamma=G(n,p)$ for $0<p<1$ fixed, compared to Theorem \ref{gnp}. For Paley graphs with a square number $n$ of vertices, which contains a clique of order $\sqrt{n}$, these results do not give a nontrivial upper bound on induced Tur\'an numbers. In general, our setup is a natural strengthening where instead of forbidding large complete bipartite subgraphs we forbid large dense subgraphs. One attractive feature of this strengthening is that, in some cases, it allows us to prove bounds which are optimal in the size of such subgraphs. Consequently, this gives optimal bounds in the case the host graph is, say, a dense random graph. With this in mind, our focus was on optimizing the dependence on $t$. We did not try to optimize, or even calculate, the dependence on $c$.

\textbf{Organization.} In Section \ref{sec:lower}, we describe several constructions giving lower bounds on $\ex(\Gamma, \calP_H)$. In Section \ref{sec:main}, we prove Theorem \ref{thm:drc} based on a novel variation of dependent random choice. In Section \ref{sec:tree}, we give the proof Theorem \ref{thm:tree} on embedding induced trees in subgraphs of $(c,t)$-sparse graphs. In Section \ref{sec:string}, we discuss applications of Theorem \ref{thm:drc} to the extremal numbers of the properties of being string graphs, incomparability graphs and comparability graphs. In Section \ref{sec:concluding}, we finish with some concluding remarks, highlighting open problems and discussing consequences of our results for induced Ramsey numbers. 

\textbf{Notation.} We follow standard graph theoretic notation. Given a graph $G$ and a subset $S \subseteq V(G)$, we denote with $N_G^+(S)$ the union of neighborhoods of vertices in $S$, that is
$$
    N_G^+(S) = \bigcup_{v \in S} N_G(v).
$$
Similarly, we let $N_G(S)$ denote the \emph{common neighborhood} of vertices in $S$, that is
$$
    N_G(S) = \bigcap_{v \in S} N_G(v).
$$

\textbf{Acknowledgments.} Part of this work was completed while the second author was visiting Stanford University. We would also like to thank Matija Buci\'c for helpful discussions on connections to the recent works on induced Tur\'an numbers. We thank Jakob Zimmermann for carefully reading the paper and suggesting improvements. We also thank Maria Axenovich for helpful comments. 

\section{Lower bound on a largest subgraph in $\mathcal{P}_H$} \label{sec:lower}

Recall that any graph contains a bipartite subgraph with at least half of the edges. Therefore, Lemma \ref{lowercor} follows immediately from the following by taking the maximum bipartite subgraph $F$ of a $K_{s,r}$-free graph on $\chi(\overline{\Gamma})$ vertices and $\mathrm{ex}(\chi(\overline{\Gamma}),K_{s,r})$ edges.

\begin{lemma}\label{lowerlemma}
Let $\Gamma$ be a graph such that $\chi(\overline{\Gamma}) = k$. Suppose $F$ is a bipartite graph on $k$ vertices which is $K_{s,r}$-free (as a subgraph), for some $2 \leq s \leq r$ such that $r \not = 2$.  Then $\Gamma$ has a subgraph $G$ with at least $e(\Gamma)e(F)/{k \choose 2}$ edges that is induced $K_{s,r}$-free. 
\end{lemma}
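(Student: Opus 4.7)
The plan is a simple probabilistic construction: partition $V(\Gamma)$ into cliques arising from a proper $k$-coloring of $\overline{\Gamma}$, randomly label the cliques by vertices of $F$, and keep exactly those $\Gamma$-edges whose labels are joined in $F$ (plus all within-clique edges). Concretely, fix a partition $V(\Gamma) = C_1 \sqcup \cdots \sqcup C_k$ into cliques, and for a bijection $\phi : [k] \to V(F)$ let $G_\phi \subseteq \Gamma$ consist of every edge $\{u,v\} \in E(\Gamma)$ with $u \in C_i, v \in C_j$ such that either $i = j$, or $i \neq j$ and $\{\phi(i), \phi(j)\} \in E(F)$.

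For the edge count, sample $\phi$ uniformly at random and write $e_{\mathrm{in}} = \sum_i \binom{|C_i|}{2}$, $e_{\mathrm{cross}} = e(\Gamma) - e_{\mathrm{in}}$. Each cross-clique edge of $\Gamma$ survives in $G_\phi$ with probability $e(F)/\binom{k}{2}$, while each within-clique edge always survives, so by linearity of expectation $\E[e(G_\phi)] = e_{\mathrm{in}} + e_{\mathrm{cross}} \cdot e(F)/\binom{k}{2} \geq e(\Gamma) \cdot e(F)/\binom{k}{2}$, using $e(F) \leq \binom{k}{2}$. Some bijection $\phi$ hence attains this bound, and it only remains to show that $G_\phi$ is induced $K_{s,r}$-free for every $\phi$.

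The heart of the argument is this $K_{s,r}$-freeness. Suppose $G_\phi$ contains an induced $K_{s,r}$ with parts $A$ (size $s$) and $B$ (size $r$). Since each $C_i$ remains a clique in $G_\phi$, each of $A, B$ meets every clique in at most one vertex; let $I_A, I_B \subseteq [k]$ be the clique indices used and set $X = \phi(I_A)$, $Y = \phi(I_B)$, $T = X \cap Y$, $t = |T|$. Reading off the adjacencies, the within-clique edges of $G_\phi$ handle those $K_{s,r}$-edges with endpoints in a common clique, while the remaining $K_{s,r}$-edges force $\{x, y\} \in E(F)$ for every $x \in X, y \in Y$ with $x \neq y$. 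I then split on $t$: if $t = 0$, the pairs in $X \times Y$ are all distinct and produce $K_{s,r} \subseteq F$, contradicting $K_{s,r}$-freeness; if $t = 1$, say $T = \{z\}$, then $z$ is $F$-adjacent to every other vertex of $X \cup Y$, so bipartiteness of $F$ forces $(X \cup Y) \setminus \{z\}$ onto a single side, killing the required $K_{s-1, r-1}$ between the nonempty sets $X \setminus T, Y \setminus T$; and if $t \geq 2$, any two distinct $z_1, z_2 \in T$ together with any $w \in (X \cup Y) \setminus T$ form a triangle in $F$, so we must have $X \cup Y = T$ and hence $s = r = t$, whence $T$ is a clique of size $t$ in a bipartite graph, forcing $t = 2$ and $s = r = 2$, which is ruled out by $r \neq 2$. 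The main obstacle is precisely this casework: the constraint that every distinct pair in $X \times Y$ is an $F$-edge has to be played simultaneously against the $K_{s,r}$-freeness and the bipartiteness of $F$, and the hypothesis $r \neq 2$ is invoked exactly to exclude the degenerate configuration $s = r = t = 2$.
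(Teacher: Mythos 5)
Your proof is correct and uses the same construction as the paper: partition $V(\Gamma)$ into $k$ cliques from a proper coloring of $\overline{\Gamma}$, take a random bijection to $V(F)$, and keep within-clique edges plus cross-clique edges whose labels are adjacent in $F$, with an identical expectation computation for the edge count. The only difference is the casework for induced $K_{s,r}$-freeness: the paper observes that once some clique is shared by both sides (the $t\geq 1$ case in your notation), one can pick three vertices $v_1,v_2,v_3$ on the larger side and two $w_1,w_2$ on the smaller, and directly exhibit three pairwise-adjacent cliques, hence a triangle in $F$; you instead analyze $t=|X\cap Y|$ explicitly, treating $t=1$ by a bipartition-side contradiction and $t\geq 2$ by a clique-inside-$T$ argument. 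Both are valid; the paper's is a little more compact while yours is more systematic, and both pinpoint $r\neq 2$ as excluding the degenerate $s=r=t=2$ configuration.
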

\begin{proof}
Since $\chi(\overline{\Gamma})= k$, we can vertex partition $\Gamma$ into $k$ cliques $A_1,\ldots,A_k$. Consider a uniform random bijection $f$ from $\{1,\ldots,k\}$ to $V(F)$. Let $G$ be the subgraph of $G$ consisting of all edges that are in the cliques $A_i$ and any edge of $\Gamma$ between $A_i$ and $A_j$ with $(f(i),f(j)) \in E(F)$.

We first show that $G$ is necessarily induced $K_{s,r}$-free. Indeed, suppose there was an induced copy of $K_{s,r}$ in $G$. If each of the $s+r$ vertices are in distinct parts $A_i$, then the corresponding vertices of $F$ would form a copy of $K_{s,r}$, thus a contradiction. No two vertices of an induced copy of $K_{s,r}$ from the same part in the bipartition can lie in the same $A_i$ as $A_i$ is a clique but they must be nonadjacent. So there must be a vertex from each part that maps to the same $A_i$, say $v_1$ and $w_1$ map to $A_1$. As $2 \leq s \leq r$ and $r > 2$, we can choose $v_2, v_3$ (distinct from $v_1$) from one part and $w_2$ (distinct from $w_1$) from the other part in the bipartition of $H$. As all three $v_1, v_2$, and $v_3$ have to lie in different $A_j$'s, and $w_2$ cannot lie in $A_1$, we obtain three different $A_j$'s which are pairwise connected by an edge. This implies $F$ contains a triangle, contradicting that $F$ is bipartite. 

Each edge of $\Gamma$ has probability at least $e(F)/{k \choose 2}$ of surviving in $G$, and so the expected number of edges of $G$ is at least $e(\Gamma)e(H)/{k \choose 2}$. Hence there is a choice of such a $G$ with at least this many edges. 
\end{proof}

We remark that the bipartite assumption on $H$ in Lemma \ref{lowerlemma} is stronger than actually needed, but for an extremal problem, this assumption does not make a big difference, only changing the answer by a factor at most $1/2$. 

As remarked earlier, for a graph $\Gamma$ with $n$ vertices we have $\chi(\overline{\Gamma}) \geq n/\omega(\Gamma)$. This bound is asymptotically sharp with high probability for $\Gamma=G(n,p)$ with $0<p<1$ fixed. In the case $\Gamma$ is a vertex-transitive graph, by randomly picking out largest cliques we get that this lower bound on $\chi(\overline{\Gamma})$ is sharp up to a logarithmic factor. In order to get an optimal bound in Theorem \ref{paley}, it suffices to cover most of the vertices instead of all (or, more importantly, covered vertices need to span many edges).

\begin{lemma}\label{vertextransitive}
Let $\Gamma$ be a vertex transitive graph on $n$ vertices. Then $\Gamma$ has an induced subgraph $\Gamma'$ with at least $e(\Gamma)/2$ edges satisfying $\chi(\overline{\Gamma'}) \leq (\ln 4)n/\omega(\Gamma)$. 
\end{lemma}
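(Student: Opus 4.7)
The plan is a simple probabilistic covering argument exploiting vertex-transitivity. Let $\omega = \omega(\Gamma)$ and let $\mathcal{K}$ denote the collection of maximum cliques of $\Gamma$. Because $\mathrm{Aut}(\Gamma)$ acts transitively on $V(\Gamma)$, it also acts on $\mathcal{K}$, and a standard double-counting of incidences shows that every vertex lies in exactly $|\mathcal{K}| \omega / n$ maximum cliques. Consequently, a uniformly random element $K \in \mathcal{K}$ satisfies $\Pr[v \in K] = \omega/n$ for every vertex $v$.

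The main step is to choose $k = \lceil (\ln 4)\,n/\omega \rceil$ independent uniformly random maximum cliques $K_1,\dots,K_k \in \mathcal{K}$, and set $V' = K_1 \cup \dots \cup K_k$. By independence, each vertex fails to belong to $V'$ with probability $(1 - \omega/n)^k \le e^{-k\omega/n} \le 1/4$. A union bound then gives that each edge of $\Gamma$ has both endpoints in $V'$ with probability at least $1/2$, so by linearity of expectation $\E[e(\Gamma[V'])] \ge e(\Gamma)/2$. Fix an outcome attaining this expectation and define $\Gamma' := \Gamma[V']$.

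It remains to bound $\chi(\overline{\Gamma'})$. Since $V' = K_1 \cup \dots \cup K_k$, assigning each $v \in V'$ to some $K_i$ containing it partitions $V'$ into at most $k$ cliques of $\Gamma$, each of which is an independent set in $\overline{\Gamma'}$. Hence $\chi(\overline{\Gamma'}) \le k \le (\ln 4)\,n/\omega$, as required (the ceiling is harmless, since if $\omega > (\ln 4)\,n$ the conclusion is vacuous or trivial, and otherwise the integer rounding is absorbed).

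There is no real obstacle: the only thing one has to be careful about is the edge-counting step, where the events ``$u \in V'$'' and ``$v \in V'$'' are positively correlated for adjacent $u,v$ (they often co-occur in the same maximum clique), so the union bound we use is an overestimate of the failure probability and the $1/2$ survival probability comfortably holds. Everything else is a direct calculation.
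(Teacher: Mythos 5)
Your proof is correct and follows essentially the same approach as the paper's: sample $k \approx (\ln 4)n/\omega$ independent uniformly random maximum cliques, use vertex-transitivity to get that each vertex is covered with probability $\omega/n$, and observe that the union of the cliques is partitioned into at most $k$ cliques. The one small difference is in the edge-counting step: the paper first bounds the expected number of uncovered vertices by $n/4$ and then invokes $d$-regularity to bound the lost edges by $dn/4 = e(\Gamma)/2$, while you instead apply a union bound to show each edge survives with probability at least $1/2$ and take expectations over edges directly — a minor and equally valid variant that sidesteps the explicit use of regularity.
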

\begin{proof}
Since $\Gamma$ is vertex transitive, it is $d$-regular for some $d$. Let $A_1, \ldots, A_k$ be cliques of size $\omega = \omega(\Gamma)$ in $\Gamma$ sampled uniformly and independently. Each vertex belongs to the same number of cliques of size $\omega$, thus the probability that a particular vertex does not belong to one $A_i$ is exactly $1 - \omega / n$. By the independence of $A_i$'s we have that it does not belong to any $A_i$ with probability $(1 - \omega/n)^k$. Using the linearity of expectation, we conclude
$$
    \mathbb{E}\left[ |V(\Gamma) \setminus (A_1 \cup \ldots \cup A_k)| \right] = n (1  - \omega / n)^k.
$$
Therefore, for $k = (\ln 4) n / \omega$ there exist cliques $A_1, \ldots, A_k$ which cover all but at most $n/4$ vertices. 
Moreover, the subgraph $\Gamma'$ induced by $A_1 \cup \ldots A_k$ can be partitioned into $k$ cliques, thus $\chi(\overline{\Gamma'}) \le k$. The vertices not in $\Gamma'$ are incident to at most $dn/4$ edges, so $e(\Gamma') \geq e(\Gamma)-dn/4 =e(\Gamma)/2$.
\end{proof}

For vertex transitive graphs $\Gamma$, we obtain a lower bound on $\mathrm{ex}(\Gamma, \mathcal{P}_{K_{s,r}})$ by applying Lemma \ref{lowercor} to the induced subgraph $\Gamma'$ of $\Gamma$. Crucially, by passing on to $\Gamma'$ we lose only a constant fraction of all the edges.

\begin{proof}[Proof of Theorem \ref{paley}]
    Let $\Gamma = P_q$ for a square $q$. In this case it is known that $\omega(\Gamma) = \sqrt{q} = \sqrt{n}$. As a Paley graph is a Cayley graph, and these are known to be vertex transitive, by applying Lemma \ref{vertextransitive} we obtain a subgraph $\Gamma' \subseteq \Gamma$ with $e(\Gamma') \ge n(n-1)/8$ edges such that $\chi(\overline{\Gamma'}) \le 2 \sqrt{n}$. By applying Lemma \ref{lowercor} on $\Gamma'$ instead of $\Gamma$, we obtain
    $$
        \mathrm{ex}(\Gamma, \mathcal{P}_{K_{s,r}}) \ge \mathrm{ex}(\Gamma', \mathcal{P}_{K_{s,r}}) = \Omega(n^{2 - 1/(2s)}). \qedhere
    $$
\end{proof}

\subsection{General graphs $H$}

For a graph $H$ and a vertex partition $\mathcal{P}:V(H)=V_1 \sqcup \ldots \sqcup V_k$, the quotient graph of $H$ obtained from the partition $\mathcal{P}$ is the graph with vertex set $\{1,\ldots,k\}$ and where $(i,j)$ with $i \not = j$ is an edge if some vertex in $V_i$ is adjacent to a vertex in $V_j$. That is, the quotient graph is the graph obtained from $H$ by contracting each of the sets $V_i$. If each $V_i$ is a clique in $H$, then this is a {\it clique quotient of $H$}. For each graph $H$, let $\mathcal{F}_H$ denote the family of bipartite graphs that are clique quotients of $H$. Note that if $H$ is bipartite, then $H \in \mathcal{F}_H$ as $H$ is a clique quotient of itself by taking the vertex partition into singletons. %
Furthermore, for bipartite $H$, the family $\mathcal{F}_H$ consists of exactly the bipartite graphs that can be obtained by contracting disjoint edges of $H$. The proof of Lemma \ref{lowerlemma} extends directly to show the following generalization. 

\begin{lemma}\label{lem:lowerlem-gen}
Let $\Gamma$ be a graph such that $\chi(\overline{\Gamma}) = k$. Let $H$ be a graph and suppose $F$ is a bipartite graph on $k$ vertices which does not contain any graphs in $\mathcal{F}_H$ as a subgraph.  Then $\Gamma$ has a subgraph $G$ with at least $e(\Gamma)e(F)/{k \choose 2}$ edges that is induced $H$-free. 
\end{lemma}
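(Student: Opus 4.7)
The plan is to essentially mimic the random assignment argument from the proof of Lemma \ref{lowerlemma}, replacing the ad hoc triangle-in-$F$ argument by the clean structural observation afforded by the definition of $\mathcal{F}_H$. Since $\chi(\overline{\Gamma})=k$, I fix a partition of $V(\Gamma)$ into $k$ cliques $A_1,\dots,A_k$. I then sample a uniformly random bijection $f:\{1,\dots,k\}\to V(F)$ and take $G$ to be the subgraph of $\Gamma$ consisting of all edges inside some $A_i$ together with every edge of $\Gamma$ between $A_i$ and $A_j$ such that $\{f(i),f(j)\}\in E(F)$.

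The crux of the argument is to show that $G$ is induced $H$-free deterministically, no matter which bijection $f$ is chosen. Suppose for contradiction that $S\subseteq V(G)$ induces a copy of $H$ in $G$, and set $V_i := S\cap A_i$. Each nonempty $V_i$ is a clique in $H$: any two vertices of $V_i$ lie in a clique of $\Gamma$, edges inside $A_i$ are retained in $G$ by construction, and induced edges of $H$ in $G$ are edges of $H$. Thus the partition $\{V_i:V_i\neq\emptyset\}$ witnesses that the quotient graph $Q$ of $H$ it defines is a clique quotient of $H$. Furthermore, if a vertex of $V_i$ is adjacent in $H$ (equivalently, in $G$) to a vertex of $V_j$ with $i\neq j$, then by construction $\{f(i),f(j)\}\in E(F)$, so $Q$ is isomorphic to a subgraph of $F$. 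Since $F$ is bipartite, so is $Q$; hence $Q\in\mathcal{F}_H$ and $F$ contains $Q$ as a subgraph, contradicting the hypothesis that $F$ avoids every member of $\mathcal{F}_H$.

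For the edge count, edges of $\Gamma$ within a single $A_i$ are kept with probability $1$, while each edge between distinct $A_i$ and $A_j$ is kept with probability exactly $e(F)/\binom{k}{2}$ (the probability that a uniformly random pair of images of $f$ lands on an edge of $F$). Since $e(F)/\binom{k}{2}\leq 1$, the expected number of retained edges is at least $e(\Gamma)\,e(F)/\binom{k}{2}$, and some choice of $f$ attains this bound.

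The main (very mild) obstacle compared to Lemma \ref{lowerlemma} is avoiding the bipartite case analysis used there: instead of ruling out induced copies of $K_{s,r}$ by exhibiting a triangle in $F$, one extracts the clique quotient directly and invokes the defining property of $\mathcal{F}_H$. No additional probabilistic or extremal ingredient is required, and there are no routine calculations beyond those already appearing in the proof of Lemma \ref{lowerlemma}.
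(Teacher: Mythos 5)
Your proof is correct and follows exactly the route the paper intends: the paper merely remarks that ``the proof of Lemma \ref{lowerlemma} extends directly,'' and you have spelled out that extension by reading off, from the partition $\{V_i = S \cap A_i\}$, the bipartite clique quotient $Q$ of $H$ and observing that $i \mapsto f(i)$ embeds $Q$ as a subgraph of $F$, contradicting the hypothesis that $F$ avoids $\mathcal{F}_H$. The edge-count computation is identical to that of Lemma \ref{lowerlemma}, so no new ingredients are needed.
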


Lemma \ref{lem:lowerlem-gen} immediately implies the following generalization of Lemma \ref{lowercor}.
\begin{corollary}\label{lowercor-fam}
    For all graphs $H$ and $\Gamma$, we have  $$\mathrm{ex}(\Gamma,\mathcal{P}_{H}) \geq \frac{1}{2}e(\Gamma)\mathrm{ex}(\chi(\overline{\Gamma}),\mathcal{F}_H)/{\chi(\overline{\Gamma}) \choose 2}.$$ 
\end{corollary}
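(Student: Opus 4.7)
The plan is to deduce Corollary \ref{lowercor-fam} as a direct consequence of Lemma \ref{lem:lowerlem-gen}, whose hypothesis asks for a \emph{bipartite} graph $F$ on $k = \chi(\overline{\Gamma})$ vertices that avoids every member of $\mathcal{F}_H$ as a subgraph. The quantity $\mathrm{ex}(\chi(\overline{\Gamma}), \mathcal{F}_H)$ appearing on the right-hand side counts edges in an arbitrary extremal $\mathcal{F}_H$-free graph, which is not required to be bipartite, so the job is to pass from such an extremal graph to a bipartite one without losing too many edges.

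First I would fix $k = \chi(\overline{\Gamma})$ and take a graph $F_0$ on $k$ vertices with $e(F_0) = \mathrm{ex}(k, \mathcal{F}_H)$ that contains no member of $\mathcal{F}_H$ as a subgraph. Next I would invoke the elementary fact that every graph has a bipartite subgraph with at least half of its edges (e.g., by the standard random 2-coloring / max-cut argument) to pull out a bipartite $F \subseteq F_0$ with $e(F) \geq e(F_0)/2 = \tfrac{1}{2}\mathrm{ex}(k, \mathcal{F}_H)$. Because $\mathcal{F}_H$-freeness (as a \emph{subgraph} property) is monotone under taking subgraphs, $F$ is automatically $\mathcal{F}_H$-free as well, and $F$ has exactly $k$ vertices (with isolated ones allowed).

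Now I would apply Lemma \ref{lem:lowerlem-gen} with this bipartite $F$. The conclusion of that lemma gives a subgraph $G \subseteq \Gamma$ that is induced $H$-free and satisfies
$$
    e(G) \;\geq\; \frac{e(\Gamma)\, e(F)}{\binom{k}{2}} \;\geq\; \frac{1}{2}\cdot \frac{e(\Gamma)\, \mathrm{ex}(\chi(\overline{\Gamma}), \mathcal{F}_H)}{\binom{\chi(\overline{\Gamma})}{2}},
$$
which is precisely the claimed lower bound on $\mathrm{ex}(\Gamma, \mathcal{P}_H)$.

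There is no genuine obstacle here; the entire content is packaged in Lemma \ref{lem:lowerlem-gen}, and the corollary is really a bookkeeping step that trades the ``bipartite'' requirement in the lemma for a loss of at most a factor of $1/2$ in the extremal count. The only minor point worth mentioning is that the $1/2$ factor is intrinsic to this reduction rather than to the underlying construction, and that since $\mathcal{F}_H$ always contains a bipartite graph (namely $H$ itself when $H$ is bipartite, or otherwise via a suitable clique quotient), the quantity $\mathrm{ex}(\chi(\overline{\Gamma}), \mathcal{F}_H)$ is a well-defined finite number.
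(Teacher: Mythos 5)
Your proof matches the paper's intended argument exactly: take an extremal $\mathcal{F}_H$-free graph on $\chi(\overline{\Gamma})$ vertices, pass to a bipartite subgraph with at least half its edges, and apply Lemma \ref{lem:lowerlem-gen}; this is the same reduction the paper uses (and spells out explicitly in the analogous deduction of Lemma \ref{lowercor} from Lemma \ref{lowerlemma}). The closing aside about $\mathrm{ex}(\chi(\overline{\Gamma}),\mathcal{F}_H)$ being finite is harmless but not really needed, since $\mathrm{ex}(n,\mathcal{F})\le\binom{n}{2}$ is always finite regardless of $\mathcal{F}$.
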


We refer the reader to a discussion in Section \ref{sec:concluding} where we make use of this corollary to give a lower bound on the number of edges when the forbidden subgraph is an even cycle $C_{2k}$.

\section{Counting $d$-bounded graphs} \label{sec:main}

In this section we prove Theorem \ref{thm:drc}. The following two lemmas will be useful to guarantee that the nonedges in the copy of $H$ we find in a subgraph of the $(c,t)$-sparse graph $\Gamma$ are nonedges of $\Gamma$.

\begin{lemma}\label{lemma:large_nonnbr}
Let $\Gamma$ be a $(c,t)$-sparse graph with $c \leq 1/2$. Let $V, W \subseteq V(\Gamma)$ with $|W| \geq c^{-r}t$, for some positive integer $r$. If we pick a uniformly random $r$-tuple $R$ of vertices in $V$, then
$$
 \Pr_R\left[ |W \setminus N_\Gamma^+(R)| < c^r |W| \right] \le rt / |V|.
$$
\end{lemma}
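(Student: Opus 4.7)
The plan is to reveal the coordinates of $R = (v_1, \ldots, v_r)$ one at a time and track how fast the surviving set $W_i := W \setminus N_\Gamma^+(\{v_1, \ldots, v_i\})$ shrinks. The goal is to show that, except on a failure event of probability at most $rt/|V|$, each step discards at most a $(1-c)$-fraction of the previous survivors, giving $|W_r| \geq c^r |W|$.

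The core one-step estimate is the following. Fix $i$ and condition on $v_1, \ldots, v_{i-1}$, so that $W_{i-1}$ is determined. Assume $|W_{i-1}| \geq t$ and set
$$
    V' := \{v \in V : |N_\Gamma(v) \cap W_{i-1}| > (1-c)|W_{i-1}|\}.
$$
If $|V'| \geq t$, then $e(V', W_{i-1}) = \sum_{v \in V'} |N_\Gamma(v) \cap W_{i-1}| > (1-c)|V'||W_{i-1}|$, contradicting the $(c,t)$-sparseness of $\Gamma$ applied to the pair $(V', W_{i-1})$. Hence $|V'| < t$, so $\Pr[v_i \in V' \mid v_1, \ldots, v_{i-1}] \leq t/|V|$. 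Moreover, whenever $v_i \notin V'$ we have $|W_i| = |W_{i-1}| - |W_{i-1} \cap N_\Gamma(v_i)| \geq c|W_{i-1}|$.

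I then union bound over $i$. Call step $i$ \emph{bad} if $|W_{i-1}| \geq t$ and $v_i \in V'$; the above gives $\Pr[\text{step } i \text{ bad}] \leq t/|V|$, so the probability that any step is bad is at most $rt/|V|$. It remains to show that, on the complementary (good) event, we have $|W_r| \geq c^r|W|$. Suppose for contradiction that $|W_r| < c^r|W|$, and let $i^*$ be the smallest index with $|W_{i^*}| < c|W_{i^*-1}|$. Then for every $j < i^*$ we have $|W_j| \geq c|W_{j-1}|$, hence
$$
    |W_{i^*-1}| \geq c^{i^*-1}|W| \geq c^r|W| \geq t,
$$
where the last inequality uses the hypothesis $|W| \geq c^{-r}t$. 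Thus step $i^*$ is bad, contradicting the good event and completing the argument.

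The main (mild) subtlety is making the inductive hypothesis $|W_{i-1}| \geq t$ available whenever one needs sparseness; this is exactly the purpose of the assumption $|W| \geq c^{-r}t$, which guarantees that $|W_{i-1}|$ cannot drop below $t$ before an actual bad step occurs. Everything else is a direct union bound, so I do not expect any real obstacle beyond bookkeeping the conditioning order.
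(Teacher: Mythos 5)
Your proof is correct and takes essentially the same approach as the paper: reveal the coordinates of $R$ one at a time, use $(c,t)$-sparseness to bound by $t$ the number of "bad" vertices $v_i$ that could shrink $W_{i-1}$ by more than a factor $c$, and union bound over the $r$ steps. Your write-up is a bit more explicit than the paper's about the conditioning and about why a bad step must occur whenever $|W_r|<c^r|W|$, but the underlying argument is identical.
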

\begin{proof}
We pick vertices in a tuple $R$ one at a time, picking $v_i$ in step $i$. Let $W_0=W$ and after picking $v_i$, let $W_i = W \setminus N_\Gamma^+(\{v_1, \ldots, v_i\})$. Assuming $|W_i| \geq c^{i}|W| \geq t$, the number of vertices in $V$ that do not have at least $c|W_i|$ nonneighbors in $W_i$ is at most $t$. Hence, the probability that $|W_{i+1}| < c|W_i|$ conditioned on $|W_i| \geq c^{i}|W|$ is at most $t/|V|$. Therefore, the probability that the number of common nonneighbors of $R$ is less than $c^{-r}|W|$ is at most $rt/|V|$.
\end{proof}

\begin{lemma}\label{lemma:independent_set}
Let $\Gamma$ be a $(c,t)$-sparse graph with $0<c \leq 1/2$. Let $k \geq 2$ and $V_1,\ldots,V_k$ be vertex subsets of $\Gamma$, not necessarily disjoint, each of size at least $m \geq k^2 c^{-k^2} t$. A uniformly random $k$-tuple in $V_1 \times \cdots \times V_k$ forms an independent set of $k$ distinct elements with probability at least $c^{k^2}$. 
\end{lemma}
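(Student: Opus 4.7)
The plan is to induct on $k$. For the base case $k = 2$, the $(c,t)$-sparseness gives $\Pr[v_1 v_2 \in E(\Gamma)] \le 1 - c$, and a crude bound yields $\Pr[v_1 = v_2] \le 1/m$. Hence the probability that $(v_1,v_2)$ is an independent set of distinct vertices is at least $c - 1/m$, which is comfortably above $c^{4} = c^{k^2}$ under the hypothesis $m \ge 4 c^{-4}t$.

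For the inductive step from $k-1$ to $k$, I would condition on the value $v_k = v$. Defining $V_i' := V_i \setminus (N_\Gamma(v) \cup \{v\})$ for $i < k$, the conditional probability factors as
$$
    \Pr[\text{indep.\ set} \mid v_k = v] = \left( \prod_{i<k} \frac{|V_i'|}{|V_i|} \right) \cdot \Pr\!\left[v_1,\ldots,v_{k-1} \text{ indep.\ set of distinct elements in } V_1' \times \cdots \times V_{k-1}'\right],
$$
since conditional on $v_i \in V_i'$ each $v_i$ is uniform in $V_i'$. These residual sets sit inside the same $(c,t)$-sparse graph $\Gamma$, so the inductive hypothesis will apply to them provided they are big enough.

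The crucial sparseness step, in the spirit of Lemma \ref{lemma:large_nonnbr}, is to show that for each fixed $i < k$ the number of $v \in V_k$ with $|V_i'| < c|V_i|/2$ is less than $t$: if there were at least $t$ such ``bad'' vertices then applying $(c,t)$-sparseness to that bad set and $V_i$ would contradict the lower bound on the number of edges between them forced by the definition of ``bad'' (each bad $v$ contributes more than $(1-c/2)|V_i| - 1$ neighbors). A union bound over the $k-1$ choices of $i$ leaves at least $|V_k| - (k-1)t \ge |V_k|/2$ ``good'' vertices $v$ for which $|V_i'| \ge c|V_i|/2$ holds for every $i < k$; here I use $|V_k| \ge 2(k-1)t$, which is implied by the hypothesis on $m$.

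For every good $v$, the residual sets satisfy $|V_i'| \ge cm/2 \ge (k-1)^{2} c^{-(k-1)^2} t$ (verified directly using $c \le 1/2$), so the inductive hypothesis provides the last factor $\ge c^{(k-1)^2}$, while the product in the display is at least $(c/2)^{k-1}$. Averaging over good $v$ and using $c \le 1/2$ then yields
$$
    \Pr[\text{indep.\ set}] \ge \tfrac{1}{2} \cdot (c/2)^{k-1} \cdot c^{(k-1)^2} = \frac{c^{k^2 - k}}{2^{k}} \ge c^{k^2}.
$$
The main obstacle is the bookkeeping: the threshold $k^2 c^{-k^2} t$ is tuned precisely so that the shrinking of each $V_i'$ by a factor of roughly $c/2$ at every induction step still leaves enough room for the next level, and so that the accumulated losses of $(c/2)^{k-1}$ across levels fit inside the target $c^{k^2}$.
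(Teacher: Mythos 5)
Your proof is correct and follows essentially the same approach as the paper: both pick the vertices one at a time and use the $(c,t)$-sparse condition to show that all but roughly $t$ choices of the next vertex shrink each remaining candidate set by at most a constant factor, then accumulate these losses into the $c^{k^2}$ bound. The only structural difference is cosmetic: the paper unrolls the iteration into a flat sweep and handles distinctness by a separate union bound at the end, whereas you phrase the same sweep as an explicit induction on $k$ (peeling off $v_k$) and handle distinctness inline by deleting $\{v_k\}$ from the residual sets $V_i'$, using a threshold $c/2$ instead of $c$ to absorb the extra $-1$ and absorbing the resulting factor $2^k$ via $(1/c)^k \geq 2^k$.
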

\begin{proof}

Consider picking the vertices one at a time, picking $v_i \in V_i$ in step $i$. Having picked $v_1,\ldots,v_i$ for some $0 \le i < k$, let $V_{j,i}$ for $i < j \leq k$ be the set of vertices in $V_j$ not adjacent to $v_1,\ldots,v_i$. Note that $V_{j,0} = V_j$ for every $j$. For $0 \le i < k$, call $v_{i+1} \in V_{i+1}$ \emph{good} if the number of nonneighbors of $v_{i+1}$ in $V_{j,i}$ is at least $c|V_{j,i}|$ for all $i + 1 < j \le k$. Note that if $|V_{j,i}| \geq t$, then all but at most $t-1$ vertices $v_{i+1} \in V_{i+1}$ have at least $c|V_{j,i}|$ nonneighbors in $V_{j,i}$. Hence, assuming $|V_{j,i}| \geq t$ for $i+1 <j \leq k$, there are at most $(k-i)(t-1)$ vertices in $V_{i+1}$ which are not good. Note that if each $v_1, \ldots, v_i$ is good, then $|V_{j,i}| \geq c^i |V_j| \geq t$ for $i+1 \leq j \leq k$. Hence the probability that not all $v_1,\ldots,v_{k-1}$ are good is at most 
$$
\sum_{i=1}^{k-1} \frac{(k-i)(t-1)}{m} = \binom{k}{2} (t-1)/m.
$$
For $1 \leq i \leq k-1$, if $v_1,\ldots,v_{i}$ are good, then $|V_{i+1,i}| \geq c^i |V_{i+1}|$. So conditioning on $v_1,\ldots,v_{k-1}$ being good, the probability that there are no edges between $v_1,\ldots,v_k$ is at least $\prod_{i=1}^k c^{i-1} = c^{{k \choose 2}}$. 

For $1 \leq i < j \leq k$, the probability that the same vertex is picked in $V_i$ and $V_j$ is at most $1/m$. Hence by the union bound, the probability that uniform random $v_1,\ldots,v_k$ are all distinct is at least $1 - {k \choose 2}/m$. All together, the probability that the chosen vertices form an independent set of distinct vertices is at least 
$$
    c^{\binom{k}{2}} - \binom{k}{2}t/ m \geq c^{k^2}.
$$
\end{proof}

Given a graph $G$ and a subset $S \subseteq V(G)$, we denote with $d_\mathrm{avg}(S)$ the average degree of vertices in $S$.  We say that a bipartite graph $G$ on vertex sets $U$ and $R$ is $\delta$-reduced if $d(u) \in [\delta d_\mathrm{avg}(U), \delta^{-1}d_\mathrm{avg}(U)]$ for all $u \in U$. The following lemma provides a strong form of one-sided regularization.

\begin{lemma}\label{lem:reduce}
    Given $\kappa > 1$, let $0 < \delta < 1/4$ be so that $1 - 2\delta > \delta^{1 - 1/\kappa}$. Let $G$ be a bipartite graph with vertex parts $U$ and $R$. Then there exists $U'\subseteq U$ such that $G' = [U', R]$ is $\delta^2$-reduced, 
    $$
        d_\mathrm{avg}(U') \ge \delta d_\mathrm{avg}(U),
    $$
    and for any real $x \ge \kappa$ we have 
    $$
        |U'| d_\mathrm{avg}(U')^x \ge \delta^x |U| d_\mathrm{avg}(U)^x.
    $$
\end{lemma}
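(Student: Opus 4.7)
The plan is to take $U'$ to be a single layer of the dyadic partition of $U$ by vertex degree. Writing $d = d_\mathrm{avg}(U)$ and $d_\mathrm{max} = \max_{u \in U} d(u)$, define
\[
U^{(j)} = \{u \in U : \delta^{j+1} d_\mathrm{max} < d(u) \le \delta^j d_\mathrm{max}\}, \quad j = 0, 1, 2, \ldots,
\]
and set $n_j = |U^{(j)}|$, $D_j = d_\mathrm{avg}(U^{(j)})$, $f_j = n_j D_j^\kappa$. Every $d(u)$ with $u \in U^{(j)}$ and $D_j$ lie in the same window $(\delta^{j+1} d_\mathrm{max}, \delta^j d_\mathrm{max}]$, so $d(u)/D_j \in (\delta, \delta^{-1})$ and each $U^{(j)}$ is $\delta$-reduced, hence $\delta^2$-reduced. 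Call $j$ \emph{good} if $D_j \ge \delta d$; bad levels (with $D_j < \delta d$) contribute at most $\delta d \cdot |U|$ to $\sum_j n_j D_j = |U| d$, so $\sum_{\text{good}} n_j D_j > (1-\delta)|U| d$.

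The main step is to show that some good $j^\ast$ satisfies $f_{j^\ast} \ge \delta^\kappa |U| d^\kappa$, in which case $U' = U^{(j^\ast)}$ realizes the three conclusions at $x = \kappa$. Suppose for contradiction that $f_j < \delta^\kappa |U| d^\kappa$ on every good level. Then $n_j D_j < \delta^\kappa |U| d^\kappa / D_j^{\kappa-1}$, and summing gives
\[
(1 - \delta)|U| d < \delta^\kappa |U| d^\kappa \sum_{\text{good}} \frac{1}{D_j^{\kappa-1}}.
\]
To bound the right-hand sum, note that good indices lie in $\{0, 1, \ldots, J+1\}$ where $J = \lfloor \log_{1/\delta}(d_\mathrm{max}/d) \rfloor$. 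Using $D_j > \delta^{j+1} d_\mathrm{max}$ on the levels $j \le J$ and $D_j \ge \delta d$ at the possible boundary level $j = J+1$, a geometric series calculation yields
\[
\sum_{\text{good}} \frac{1}{D_j^{\kappa-1}} \le \frac{(2 - \delta^{\kappa-1})\delta^{-(\kappa-1)}}{d^{\kappa-1}(1 - \delta^{\kappa-1})}.
\]
Substituting and simplifying reduces the derived inequality to $\delta^{\kappa-1}(1 - 2\delta) > 1 - 3\delta$. But the hypothesis $1 - 2\delta > \delta^{1 - 1/\kappa}$ is equivalent to $\delta^{\kappa-1} < (1 - 2\delta)^\kappa$, and since $\kappa > 1$ and $\delta < 1/4$,
\[
\delta^{\kappa-1}(1-2\delta) < (1-2\delta)^{\kappa+1} \le (1-2\delta)^2 \le 1 - 3\delta,
\]
contradicting the derived inequality. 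Hence some good $j^\ast$ with $f_{j^\ast} \ge \delta^\kappa |U| d^\kappa$ exists.

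Taking $U' = U^{(j^\ast)}$, the three conditions at $x = \kappa$ hold. For general $x \ge \kappa$, the desired inequality $|U'| d_\mathrm{avg}(U')^x \ge \delta^x |U| d^x$ rearranges to $|U'|/|U| \ge (\delta d / d_\mathrm{avg}(U'))^x$, whose right-hand side is non-increasing in $x$ because $d_\mathrm{avg}(U') \ge \delta d$; so validity at $x = \kappa$ propagates to all $x \ge \kappa$. The main obstacle I expect is keeping the geometric series bound on $\sum 1/D_j^{\kappa-1}$ tight enough: naively the number of good levels is $\Theta(\log(d_\mathrm{max}/d))$, threatening a logarithmic overhead, but the summands grow geometrically in $j$ so the entire sum is governed by the last good level at scale $\delta d$ up to a bounded factor in $\delta, \kappa$, and it is precisely the hypothesis $1 - 2\delta > \delta^{1-1/\kappa}$ that provides the margin required to close the contradiction.
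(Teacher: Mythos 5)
Your proof is correct, and it takes a genuinely different route from the paper. The paper runs an iterative \emph{trimming} process: starting with $U_0 = U$, it repeatedly passes to $U_{i+1} = \{u : d(u) \ge \delta^{-1} D_i\}$ as long as the discarded vertices carry less than a $2\delta$ fraction of the edges; when this stopping rule triggers at step $i_0$, it sets $U' = \{u \in U_{i_0} : \delta D_{i_0} \le d(u) \le \delta^{-1}D_{i_0}\}$. The hypothesis $1 - 2\delta > \delta^{1-1/\kappa}$ is then used to control the multiplicative loss per iteration: each step shrinks $|U_i|$ by a factor $\delta$ and $e(U_i,R)$ by a factor at most $1-2\delta$, so the ratio $\left(\tfrac{1-2\delta}{\delta^{1-1/x}}\right)^{xi_0}$ is at least $1$ for every $x \ge \kappa$. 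Your approach instead partitions $U$ into $\delta$-adic degree shells $U^{(j)}$, each of which is automatically $\delta$-reduced (in fact a bit better than the $\delta^2$-reduced the lemma asks for), and then runs a weighted pigeonhole by contradiction: if no good shell $j$ satisfies $n_j D_j^\kappa \ge \delta^\kappa |U| d^\kappa$, then writing $n_j D_j < \delta^\kappa |U| d^\kappa / D_j^{\kappa - 1}$ and summing gives a geometric series in $1/D_j^{\kappa-1}$ whose total is controlled by the last good scale $\approx \delta d$; the resulting inequality $\delta^{\kappa-1}(1-2\delta) > 1-3\delta$ is then ruled out by $\delta^{\kappa-1} < (1-2\delta)^\kappa$ and $(1-2\delta)^2 \le 1-3\delta$ (which uses $\delta < 1/4$), giving the contradiction. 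I checked the geometric-series constant $\frac{(2-\delta^{\kappa-1})\delta^{-(\kappa-1)}}{d^{\kappa-1}(1-\delta^{\kappa-1})}$, the algebraic reduction, and the monotonicity-in-$x$ step, and they all go through. Structurally, the paper's method is a stopping-time argument that finds a near-regular subset greedily, while yours is a global degree-dyadic decomposition plus a counting contradiction; yours trades a simpler set construction for a slightly more involved final computation, and both lean on the same hypothesis $1-2\delta > \delta^{1-1/\kappa}$ at the crucial point, just packaged differently (per-iteration loss control versus closing a global counting bound).
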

\begin{proof}
    Set $U_0=U$, and consider the following process for $i\ge 0$. As long as 
    \begin{equation} \label{eq:continue_condition}
        \sum_{\substack{u\in U_i \\ d(u) \le \delta^{-1}D_i}} d(u) < 2\delta e(U_i, R),
    \end{equation}
    where $D_i = d_{\mathrm{avg}}(U_i)$ is the average degree of vertices in $U_i$, set 
    $$
        U_{i+1} := \{u\in U_i: d(u) \ge \delta^{-1}D_i\}
    $$
    and proceed to the next iteration. By Markov's inequality, we have $|U_{i+1}| \le \delta |U_i|$. By \eqref{eq:continue_condition}, we have $|e(U_{i+1}, R)| \ge (1-2\delta)|e(U_i, R)|$. 

    Let $i_0$ denote the first index $i$ for which \eqref{eq:continue_condition} does not hold. Set 
    $$
        U'=\{u\in U_{i_0}:\delta D_{i_0} \le d(u) \le \delta^{-1}D_{i_0}\}.
    $$
    Note that $G' = G[U', R]$ is $\delta^2$-reduced. Furthermore, from the assumption that \eqref{eq:continue_condition} fails for $i_0$ we have $e(U', R) \ge \delta e(U_{i_0}, R)$. We also have that $e(U_{i_0}, R) \ge (1-2\delta)^{i_0} e(G)$ and $|U'| \le |U_{i_0}| \le \delta^{i_0}|U|$. Then
    $$
        d_\mathrm{avg}(U') = \frac{e(U', R)}{|U'|} \ge \frac{\delta e(U_{i_0}, R)}{|U_{i_0}|} \ge \delta \left( \frac{1 - 2\delta}{\delta} \right)^{i_0} d_\mathrm{avg}(U) \ge \delta d_\mathrm{avg}(U).
    $$    
    Similarly, for $x \ge \kappa$ we have
    $$
        |U'| d_{\mathrm{avg}}(U')^x = \frac{e(U', R)^x}{|U'|^{x - 1}} \ge \frac{\delta^x e(U_{i_0}, R)^x}{|U_{i_0}|^{x-1}} \ge \delta^x \left( \frac{1 - 2\delta}{\delta^{1-1/x}} \right)^{x i_0} |U| d_\mathrm{avg}(U)^x.
    $$
    As $1 - 2\delta > \delta^{1 - 1/\kappa} \ge \delta^{1 - 1/x}$, we obtain the desired inequality.
\end{proof}

Given a graph $H$, for the rest of the section we fix $\kappa = 1 + 1/v(H)$ and let $\delta$ be as in \Cref{lem:reduce}.

The following lemma is the heart of the proof of Theorem \ref{thm:drc}. Given a graph $\Gamma$ with $n$ vertices and a bipartite subgraph $G \subseteq \Gamma$ on vertex sets $U$ and $R$ with $|U|=m,|R|=r$, let $q = e(G)/(mr)$ be the edge density of $G$. We say that $S \subseteq R$ is \emph{$(\varepsilon, T)$-rich}, for some $\varepsilon > 0$ and $T \subseteq U$, if
$$
    |N_G(S) \setminus N_\Gamma^+(T)| \ge \varepsilon m q^{|S|}.
$$
For $1\le k\le d$, we say that $T$ is \emph{$(\beta, \varepsilon, k)$-good} if the number of $k$-tuples of vertices in $N_G(T)$ that are not $(\varepsilon, T)$-rich is at most $\beta |N_G(T)|^k$. Note that we do not require elements in these tuples to be distinct.

\begin{lemma} \label{lemma:drc_count_1}
For every $0 < c < 1/2$, $\beta > 0$, and positive integer $d$ there are $C, \varepsilon > 0$ such that the following holds. Let $\Gamma$ be a $(c,t)$-sparse graph with $n$ vertices and $G \subseteq \Gamma$ be a $\delta$-reduced bipartite subgraph of $\Gamma$ with parts $U,R$ with $|U|=m,|R|=r$, and at least $C rm^{1 - 1/d} t^{1/d}$ edges. Then, with probability at least $1/2$ over uniformly chosen $u\in U$, we have that $u$ is $(\beta,\eps,k)$-good for all $k\le d$. 
\end{lemma}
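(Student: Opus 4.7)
I will carry out a dependent-random-choice style double count for each $k \le d$ separately, then take a union bound over $k=1,\ldots,d$ (losing a factor $d$ that can be absorbed into $C$ and $\eps$). Fix $k$. Call a pair $(u, S)$ with $u \in U$ and $S \in R^k$ \emph{$k$-bad} if $S \subseteq N_G(u)$ (equivalently, $u \in W(S) := N_G(S)$) and $|W(S) \setminus N_\Gamma(u)| < \eps m q^k$. Writing $B(S) = \{u \in U : (u, S) \text{ is $k$-bad}\}$, the total number of $k$-bad pairs equals $\sum_{S \in R^k} |B(S)|$, so the whole argument reduces to bounding $|B(S)|$ uniformly in $S$ and then converting the resulting bound on $k$-bad pairs into a bound on the number of deficient $u$'s.

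The crux is to show that $|B(S)| \le \max(t,\, 2\eps m q^k/c)$ for every $S$. If $|W(S)|$ is below that threshold, we use the trivial bound $|B(S)| \le |W(S)|$. Otherwise $|W(S)| \ge 2\eps m q^k/c$, in which case each $u \in B(S)$ satisfies $|N_\Gamma(u) \cap W(S)| > |W(S)| - \eps m q^k \ge (1-c/2)|W(S)|$; summing over $u \in B(S)$ gives $e_\Gamma(B(S), W(S)) > (1-c/2)|B(S)||W(S)|$. Since also $|W(S)| \ge t$, if we had $|B(S)| \ge t$ too then the $(c,t)$-sparseness bound $e_\Gamma(B(S), W(S)) \le (1-c)|B(S)||W(S)|$ would give $1-c/2 < 1-c$, a contradiction. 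Hence $|B(S)| < t$ in this case, and summing over $S \in R^k$ yields at most $r^k \max(t, 2\eps m q^k/c)$ many $k$-bad pairs.

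Now call $u$ \emph{$k$-deficient} if it fails the $(\beta,\eps,k)$-goodness condition, i.e.\ it is contained in more than $\beta |N_G(u)|^k$ many $k$-bad pairs. By $\delta$-reducedness, $|N_G(u)| \ge \delta d_\mathrm{avg}(U) = \delta q r$, so each $k$-deficient $u$ contributes more than $\beta(\delta qr)^k$ many $k$-bad pairs. Combined with the upper bound from the previous paragraph,
\[
\#\{u : u \text{ is $k$-deficient}\} \;<\; \frac{r^k \max(t,\, 2\eps m q^k / c)}{\beta (\delta q r)^k} \;=\; \frac{\max(t,\, 2\eps m q^k / c)}{\beta \delta^k q^k}.
\]
Choosing $\eps$ a small enough constant (of order $c\beta\delta^d/d$) forces the second argument of the maximum to be at most $m \beta \delta^k q^k / (2d)$. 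For the first argument, the edge-count hypothesis $e(G) \ge Crm^{1-1/d}t^{1/d}$ gives $q \ge C(t/m)^{1/d}$, hence (using $q \le 1$ and $k \le d$) $q^k \ge q^d \ge C^d \cdot t/m$; so taking $C$ a large enough constant in terms of $d,\beta,\delta$ also ensures $t \le m\beta \delta^k q^k/(2d)$. Thus for each $k$ at most $m/(2d)$ vertices of $U$ are $k$-deficient, and a union bound over $k=1,\ldots,d$ gives that at most $m/2$ vertices are deficient at any level, which is the claimed bound.

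The main thing to engineer is the threshold $2\eps m q^k/c$ in the case split: it is the smallest value for which $|W(S)|$ exceeding it forces the $B(S)$-to-$W(S)$ density in $\Gamma$ to strictly exceed $1-c$, yielding the sparseness contradiction. The $\delta$-reducedness of $G$ is used precisely once, to give the lower bound $|N_G(u)| \ge \delta q r$ needed to convert an upper bound on total $k$-bad pairs into an upper bound on the number of $k$-deficient vertices; the exponent $1/d$ in the edge-count hypothesis is exactly what is required so that $q^d$ dominates $t/m$ and the $t$-term in the maximum becomes negligible.
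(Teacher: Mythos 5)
Your proof is correct and follows essentially the same approach as the paper: you double-count the "bad'' pairs $(u,S)$, split on whether $|N_G(S)|$ is above or below a threshold comparable to $\eps m q^k$, invoke $(c,t)$-sparseness in the large case and a trivial count in the small case, then convert the total into a bound on the number of deficient $u$ via $\delta$-reducedness and the edge-count hypothesis, finishing with a union bound over $k$. The paper phrases this in probabilistic language (computing $\E[s_1(u)]$ and $\E[s_2(u)]$ and applying Markov's inequality), whereas you phrase it as a deterministic count of bad pairs plus pigeonhole, but the underlying argument is the same.
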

\begin{proof}
    Consider $k \in [d]$. Let $\mathcal{B}_k$ denote the set of all vertices $u$ which are not $(\beta, \varepsilon, k)$-good, where $\varepsilon > 0$ will be chosen as a sufficiently small constant in terms of $\beta$, $c$ and $d$.

    We distinguish two types of $k$-tuples that are not $(\eps,u)$-rich. Let $\mathcal{S}$ denote the family of $k$-tuples $S$ in $R$ such that $|N_G(S)| \ge c^{-1} \eps mq^k$. Let $s_1(u)$ denote the number of $k$-tuples $S\subset N_G(u)$ for which $S$ is not $(\eps,u)$-rich and $S\in \mathcal{S}$. Let $s_2(u)$ denote the number of $k$-tuples $S\subset N_G(u)$ for which $S$ is not $(\eps,u)$-rich and $S\notin \mathcal{S}$. 

    We first bound $\mathbb{E}[s_1(u)]$. For each $S\in \mathcal{S}$, since $\Gamma$ is $(c,t)$-sparse there are at most $t$ vertices $u$ for which $S$ is not $(\eps,u)$-rich. Hence, 
    \[
    \mathbb{P}[\textrm{$S$ is not $(\eps,u)$-rich} \mid S\subset N_G(u)] \le \frac{t}{|N_G(S)|} \le \frac{t}{c^{-1} \eps mq^k}. 
    \]
    In particular, since $G$ is $\delta$-reduced,
    \[
    \frac{c^{-1} \eps mq^k}{t}\mathbb{E}[s_1(u)] \le \mathbb{E}[|N_G(u)|^k] \in [(\delta D)^k, (\delta^{-1}D)^k],
    \]
    where $D$ is the average degree of vertices in $U$. Next, we bound $\mathbb{E}[s_2(u)]$. For $S\notin \mathcal{S}$ we have 
    \[
    \mathbb{P}[S\subset N_G(u)] \le c^{-1}\eps q^k.
    \]
    Hence, 
    \[
    \mathbb{E}[s_2(u)] \le r^kc^{-1}\eps q^k \le c^{-1} \eps (\delta^{-1} D)^k.
    \]

    By Markov's inequality, with probability at least $1-1/(2d)$ over $u\in U$, 
    \begin{align*}
        s_1(u)+s_2(u) &\le \frac{2dt}{c^{-1}\eps mq^k} (\delta^{-1}D)^k + 2d c^{-1} \eps (\delta^{-1} D)^k \le 2d (\delta^{-1}D)^k \left(\frac{ct}{\eps mq^k} + c^{-1}\eps \right). 
    \end{align*}
    For $\eps$ sufficiently small and $C$ sufficiently large, we then have $s_1(u)+s_2(u) \le \beta |N_G(u)|^k$ with probability at least $1-1/(2d)$. 

    By the union bound, with probability at least $1/2$ we have that $u$ is $(\beta,\eps,k)$-good for all $k\in [1,d]$. 
\end{proof}

Let $H = (A \cup B, E)$ be a $d$-bounded bipartite graph with $A_0 \subseteq A$ denoting the set of vertices complete to $B$ and $A_1 = A \setminus A_0$. Recall that, by the definition, every vertex in $A_1$ has degree at most $d$. Let $\ell = |A_0|$, $a = |A_1|$, $b = |B|$, and let $e$ be the number of edges of $H$. %

\begin{claim}  \label{claim:copies_T}
Assume $\ell=1$. Let $G$ be a bipartite subgraph of $\Gamma$ with vertex sets $|U|=m,|R|=r$. Assume that $G$ is $\delta$-reduced, and $r\ge |V(H)|^2c^{-|V(H)|^2}\delta^{-1} t(m/t)^{1/d}$. Let $\beta>0$ be sufficiently small in $H$, $c$ and $d$. Assume that $G$ contains at least $Crm^{1-1/d}t^{1/d}$ edges for $C$ sufficiently large in $\beta$, $H$, $c$ and $d$. Suppose $u \in U$ is $(\beta, \varepsilon, k)$-good. Then, for some constant $\gamma = \gamma(\delta, c, \beta, \eps, H) > 0$, there are at least 
$$
    \gamma m^{a} r^{b} q^{e(H)}
$$
copies of $H$ in $G$ which are induced in $\Gamma$ and contain $u$, where $q = e(G) / (mr)$ is the edge density of $G$.
\end{claim}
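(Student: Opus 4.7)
The plan is to map $a_0 \mapsto u$ and count extensions to an induced copy of $H$ in two phases: first embed $B$ into $N_G(u)$ to handle adjacencies to $u$, then for each sufficiently ``rich'' embedding of $B$ count valid embeddings of $A_1$ via \Cref{lemma:independent_set}. The hypothesis that $u$ is $(\beta,\eps,k)$-good for all $k\le d$ is exactly what makes the first phase work.

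Call an embedding $\phi : B \to N_G(u)$ \emph{fine} if (i) $\phi$ is injective with $\phi(B)$ an independent set in $\Gamma$, and (ii) for every $x \in A_1$, writing $S_x := N_H(x)$ and $B_x' := B \setminus S_x$, the candidate extension set
$$
    T_x := N_G(\phi(S_x)) \setminus N_\Gamma^+\!\bigl(\phi(B_x') \cup \{u\}\bigr)
$$
satisfies $|T_x| \ge c^b \eps\, m q^{|S_x|}$. By construction, $T_x$ collects exactly the vertices into which $x$ can be embedded, compatibly with $u$ and $\phi$, to produce an induced copy of $H$, modulo the internal independence of $\phi(A_1)$.

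For the first phase I take $\phi$ uniformly random in $N_G(u)^B$. From $\delta$-reducedness and the assumed lower bounds on $e(G)$ and $r$, $|N_G(u)|\ge \delta D$ is much larger than $|V(H)|^2 c^{-|V(H)|^2}t$, so \Cref{lemma:independent_set} gives (i) with probability at least $c^{b^2}$. For (ii), I bound the failure at each fixed $x \in A_1$ by two terms: either $\phi(S_x)$ fails to be $(\eps,u)$-rich, contributing at most $\beta$ by the goodness of $u$; or $\phi(S_x)$ is rich but $N_\Gamma^+(\phi(B_x'))$ removes too much. In the latter case, applying \Cref{lemma:large_nonnbr} with $V = N_G(u)$, $W = N_G(\phi(S_x)) \setminus N_\Gamma^+(\{u\})$, and the random tuple $R = \phi(B_x')$ — independent of $\phi(S_x)$ as disjoint coordinates of $\phi$ — gives a failure of at most $bt/|N_G(u)|$; here I use that $|W| \ge \eps m q^{|S_x|} \ge c^{-b}t$ for $C$ large. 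Summing over $x\in A_1$ yields a fine fraction of at least $c^{b^2}/2$, so at least $(c^{b^2}/2)|N_G(u)|^b$ fine embeddings.

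For the second phase, fix any fine $\phi$. The admissible $\phi|_{A_1}$ are tuples in $\prod_{x\in A_1}T_x$ forming an independent set of distinct vertices, none equal to $u$ (nonadjacency to $u$ is already in $T_x$). Removing $u$ from each $T_x$ loses at most a factor $1/2$, and the resulting sizes exceed $a^2c^{-a^2}t$ for $C$ large, so \Cref{lemma:independent_set} gives at least $c^{a^2}\prod_{x\in A_1}|T_x|/2^a$ valid tuples. Combining the two phases with $|N_G(u)|\ge \delta qr$ and $\sum_{x\in A_1}|S_x| = e(H)-b$ (as $a_0$ contributes $b$ edges) yields the desired $\gamma\, m^a r^b q^{e(H)}$ induced copies. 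The main obstacle is coordinating the richness condition on $\phi(S_x)$ (which comes from the goodness of $u$) with the induced-ness condition against $\phi(B_x')$ (which comes from $(c,t)$-sparseness via \Cref{lemma:large_nonnbr}); the decoupling that makes a clean union bound possible is that $\phi$ restricted to the disjoint sets $S_x$ and $B_x'$ gives independent random variables.
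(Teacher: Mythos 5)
Your proof is correct and takes essentially the same approach as the paper: a two-phase dependent-random-choice argument that first embeds $B$ into $N_G(u)$, uses the goodness of $u$ together with \Cref{lemma:large_nonnbr} (exploiting the independence of the coordinates on the disjoint sets $S_x$ and $B_x'$) to guarantee rich candidate sets $T_x$, and then applies \Cref{lemma:independent_set} to embed $A_1$. Your explicit removal of $u$ from each $T_x$ is a small technical detail that the paper silently elides, but it is negligible and does not change the argument.
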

\begin{proof}
Consider a vertex $u\in U$ which is $(\beta,\eps,k)$-good for all $k\le d$. We then consider embeddings of $H$ where $A_0$ (of size $1$) maps to $u$. We then embed $B$ according to a uniformly random $b$-tuple $X\subseteq N_G(u)$. We first show that with decent probability, $X$ satisfies the following properties: 
\begin{enumerate}
    \item all elements in $X$ are distinct and independent in $\Gamma$,
    \item every $d$-tuple $Y$ in $X$ is $(c^{b-d} \eps, \{u\} \cup (X \setminus Y))$-rich. 
\end{enumerate}
The first property holds with probability at least $c^{b^2}$ by Lemma \ref{lemma:independent_set}. We show the second property holds with probability close to one by a union bound over $Y$. By the assumption that $u$ is $(\beta, \eps, k)$-good for $k\le d$, one particular $k$-tuple $Y$ in $N_G(u)$ fails to be $(\varepsilon, u)$-rich with probability at most $\beta$. Conditioning on $Y$ being $(\varepsilon, u)$-rich, by Lemma \ref{lemma:large_nonnbr} the probability of choosing the remaining vertices in $X$ from $N_G(u)\setminus Y$ such that 
$$
    |N_G(Y) \setminus N^+_\Gamma(\{u\} \cup (X \setminus Y))| < c^{b - k} \varepsilon mq^k
$$
is at most $(b-k)t / |N_G(u)\setminus Y|$. Therefore, one particular $Y$ fails to be $(\eps, \{u\} \cup (X \setminus Y))$-rich with probability at most $\beta + 2 b t / (\delta Cr(t/m)^{1/d})$. By the assumed lower bound on $r$ and by taking $C$ to be sufficiently large, we can upper bound this probability by $2\beta$. %
Finally, for $\beta > 0$ sufficiently small, by taking union bound over all $d$-tuples of $X$ we have that $X$ fails either property with probability at most
$$
    1 - c^{b^2} + \binom{b}{d} 2 \beta < 1 - c^{b^2} / 2.
$$

Next, we show that each $X$ which satisfies the two properties gives rise to $c^{a^2+ab} \eps^a m^a q^{e(H)-b}$ copies of $H$ in $G$ which are induced in $\Gamma$. As there are $c^{b^2} |N_G(u)|^b / 2 \ge (c^{b^2}/2) (\delta q r)^b$ such $b$-tuples $X$, this implies the claim.

Fix an ordering $v_1, \ldots, v_b$ of the vertices in $B$, and consider one such $X$. For every tuple of vertices $Y$ in $X$, let $U(Y) = N_G(Y) \setminus N^+_\Gamma(\{u\} \cup (X \setminus Y))$ denote the `unique' neighborhood of $Y$. Let $f$ be an arbitrary mapping of $v_1, \ldots, v_b$ to $X$ and, for each $w \in A_1$, let $B_w = f(N_H(w))$ and $V_w = U(B_w)$. Now pick one vertex from each $V_w$ for $w \in A_1$, uniformly at random. As $|V_w| \ge c^b \eps m q^{|N_H(w)|}$, by Lemma \ref{lemma:independent_set} we have that all chosen vertices are distinct and independent in $\Gamma$ with probability $c^{a^2}$. Assuming this is the case, we obtain a copy of $H$ in $G$ which is induced in $\Gamma$. Therefore, there are at least $c^{a^2}(c^b \eps m)^a q^{e(H)-b}$ copies which contain $u$.
\end{proof}

Theorem \ref{thm:drc} follows immediately from Lemma \ref{lem:count} below, upon replacing $G$ with a bipartite subgraph with at least $(1-o(1))e(G)/2$ edges on parts $U$ and $R$ with $|U|= \lfloor n/2 \rfloor$ and $|R| = \lceil n/2 \rceil$. 

\begin{lemma}\label{lem:count}
    For every $0 < c < 1/2$, and positive integer $d$ and $\ell\le d$, there are $C_\ell, C'_\ell, \gamma_\ell > 0$ such that the following holds. Let $\Gamma$ be a $(c,t)$-sparse graph with $n$ vertices and $G \subseteq \Gamma$ be a bipartite subgraph with parts $U,R$ with $|U|=m,|R|=r$ and edge density $q \ge C_\ell m^{- 1/d} t^{1/d}$. Assume that $r\ge C'_\ell t(m/t)^{\ell/d}$. Let $H$ be $d$-bounded with vertex parts $A$ and $B$, $\ell$ vertices in $A$ complete to $B$, $|A|=\ell+a$ and $|B|=b$. Then, there are at least $\gamma_\ell m^{a+\ell} r^b q^{e(H)}$ copies of $H$ in $G$ which are induced in $\Gamma$. 
\end{lemma}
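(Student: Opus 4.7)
The proof proceeds by induction on $\ell$. The base case $\ell = 1$ is essentially \Cref{claim:copies_T} combined with \Cref{lem:reduce}: applying \Cref{lem:reduce} to $G$ on the $U$-side gives a $\delta^2$-reduced subgraph $G[U', R]$, \Cref{lemma:drc_count_1} produces at least $|U'|/2$ vertices of $U'$ that are $(\beta, \varepsilon, k)$-good for all $k \le d$, and \Cref{claim:copies_T} yields $\gtrsim |U'|^a r^b (q')^{e(H)}$ induced copies of $H$ through each such vertex, where $q'$ is the density of $G[U', R]$. Summing and invoking the moment inequality $|U'| \, d_\mathrm{avg}(U')^x \ge \delta^x m (qr)^x$ of \Cref{lem:reduce} at a suitable $x \ge \kappa$ recovers the target $\gamma_1 m^{a+1} r^b q^{e(H)}$.

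For the inductive step with $\ell \ge 2$, fix a vertex $a_0 \in A_0$ and set $H' := H - a_0$, a $d$-bounded bipartite graph with $\ell - 1$ vertices complete to $B$, to which the inductive hypothesis applies. The plan is to produce a large set $\cG \subseteq U$ of valid images of $a_0$ and, for each $u \in \cG$, apply the inductive hypothesis to the bipartite subgraph $G_u := G[U^*_u, R_u]$, where $U^*_u := U \setminus N_\Gamma(u)$ and $R_u := N_G(u) \cap R$. Concretely, we first apply \Cref{lem:reduce} to obtain a $\delta^2$-reduced subgraph $G[U', R]$ with the moment inequality at $x = b \ge \kappa$ (valid when $b \ge 2$; the degenerate $b = 1$ case admits a direct treatment). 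Then we use \Cref{lemma:drc_count_1} to retain the fraction of $U'$ that is $(\beta, \varepsilon, k)$-good for all $k \le d$, and the $(c,t)$-sparse condition applied to $(U, U)$ to discard at most $t$ further vertices and keep those with $|U^*_u| \ge (c/2)m$. For each surviving $u \in \cG$ we have $|R_u| \in [\delta^2 D', \delta^{-2} D']$ with $D' := d_\mathrm{avg}(U') \ge \delta q r$.

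To apply the inductive hypothesis to $G_u$ with $H'$, we verify its preconditions: the size condition $|R_u| \ge C'_{\ell-1} t(|U^*_u|/t)^{(\ell-1)/d}$ follows from $|R_u| \ge \delta^3 q r$, $|U^*_u| \le m$, and the input hypotheses on $q, r$, provided $C_\ell C'_\ell$ is sufficiently large. The density condition $q_u \ge C_{\ell-1}(t/|U^*_u|)^{1/d}$ is the heart of the argument, to be established via a double-counting bound $\sum_{u \in U} e_G(U^*_u, R_u) \gtrsim c\, q^2 m^2 r$ (obtained by applying $(c,t)$-sparseness to $N_G(r) \cap U$ for each $r \in R$) combined with the $\delta^2$-reducedness controlling $|R_u|$, which together imply that a constant fraction of $u \in \cG$ satisfies $q_u \gtrsim q$. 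The induction then supplies $\gtrsim \gamma_{\ell-1} |U^*_u|^{a+\ell-1} |R_u|^b q_u^{e(H) - b}$ copies of $H'$ in $G_u$ induced in $\Gamma$, each extending to an induced copy of $H$ in $G$ with $u$ mapped to $a_0$: the $G$-edges from $u$ to the image of $B \subseteq R_u$ are automatic, and the required $\Gamma$-non-edges from $u$ to the image of $A \setminus \{a_0\} \subseteq U^*_u$ hold by the definition of $U^*_u$.

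Finally, summing over $u \in \cG$ and using $|U^*_u| \gtrsim m$, $q_u \gtrsim q$, together with $\sum_{u \in \cG} |R_u|^b \gtrsim m(qr)^b$ from the moment inequality at $x = b$, yields at least $\gamma_\ell m^{a+\ell} r^b q^{e(H)}$ copies. The main obstacle is the density control $q_u \gtrsim q$ for enough good $u$ in the inductive step, which must hold both to satisfy the density precondition of the inductive hypothesis and to provide a nontrivial $q_u^{e(H)-b}$ factor in the count. A secondary subtlety is the regime where $|U'|$ is much smaller than $m$ (highly uneven $U$-side degrees); here the moment-preserving property of \Cref{lem:reduce} at exponent $x = b$ ensures that the larger $|R_u|^b$ factors of the fewer good $u$ compensate for their reduced count.
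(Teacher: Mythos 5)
Your overall structure matches the paper's: induct on $\ell$, regularize via \Cref{lem:reduce}, use \Cref{lemma:drc_count_1} to find good vertices $u$, strip one vertex of $A_0$ from $H$ to get $H'$, and apply the inductive hypothesis to a bipartite graph with $R$-side $N_G(u)$ and $U$-side contained in the $\Gamma$-non-neighborhood of $u$. The base case and the final moment-inequality bookkeeping are also in the right spirit (the paper applies the moment inequality at $x = e(H)/(a+\ell-1)$ rather than $x=b$, which avoids your awkward $b=1$ caveat, but this is a minor technical choice).

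However, there is a genuine gap in the inductive step, precisely at the point you flag as the ``heart of the argument.'' You take $U^*_u := U \setminus N_\Gamma(u)$, using all of $U$, whereas the paper uses $U' \setminus N_\Gamma(u)$, staying inside the regularized set $U'$. This matters: the inductive density precondition is $q_u \ge C_{\ell-1}(t/|U^*_u|)^{1/d}$, and the only lower bound you have on $e(G_u)$ comes from the $(\beta,\varepsilon,1)$-goodness of $u$ inside $G'=G[U',R]$, namely $e(G_u) \ge (1-\beta)|R_u|\,\varepsilon m' q'$. Dividing by $|U^*_u|\cdot|R_u|$ with $|U^*_u|$ potentially as large as $m$ gives $q_u \gtrsim m'q'/m$, which can be much smaller than $q$ when $\lvert U'\rvert \ll \lvert U\rvert$ (\Cref{lem:reduce} can indeed discard a large fraction of both vertices \emph{and} edges; only the moment $m'\,d_{\mathrm{avg}}(U')^x$ is preserved). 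The moment inequality at $x=b$ controls the final $\sum_u |R_u|^b$ but does not rescue the density \emph{precondition}, which must hold per-vertex before the inductive hypothesis can even be invoked. The paper avoids this entirely by keeping the $U$-side equal to $U'\setminus N_\Gamma(u) \subseteq U'$: then the denominator in $q''$ is at most $m'$, and the $(\beta,\varepsilon,1)$-goodness bound $e(G'') \ge (1-\beta)|N_{G'}(u)|\,\varepsilon m' q'$ directly yields $q'' \ge (1-\beta)\varepsilon q'$, after which the moment inequality at $x=d$ converts this into the required $q'' \ge C_{\ell-1}(t/m'')^{1/d}$.

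Your proposed substitute for this step — the double-counting identity $\sum_{u\in U} e_G(U^*_u,R_u) \gtrsim c\,q^2 m^2 r$ — is correct as stated, but it does not localize to the good set $\cG \subseteq U'$: in the unbalanced regime the edge mass in that sum can be carried entirely by vertices of $U\setminus U'$, so it gives no lower bound on $q_u$ for $u\in\cG$. The cleaner route, which is what the paper does, is to observe that $(\beta,\varepsilon,1)$-goodness \emph{already is} the per-vertex density control you need, with no averaging argument required.
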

\begin{proof}
    We prove the statement by induction on $\ell$. We let $\beta>0$ be sufficiently small (depending on $H$, $c$ and $d$) and let $\varepsilon>0$ be given by Lemma \ref{lemma:drc_count_1}. For $\ell\ge 1$, we choose the constants $C_\ell$ and $C'_\ell$ as follow.
    \begin{alignat*}{2}
        &C_\ell = 1/(2(1-\beta)\eps\delta)^{\ell}, \,\,\,\, C_\ell' = \delta^{-2\ell}, \,\,\,\, \gamma_\ell = \frac{\gamma \delta^{e(H)}}{2}((1-\beta)^{e(H)}\eps^{e(H)}\delta^{2b}/2)^{\ell-1},
    \end{alignat*}
    where $\gamma = \gamma(\delta,c,\beta,\eps,H)$ is defined in Claim \ref{claim:copies_T}.

    {\bf Base case:} $\ell = 1$. 
    
    Apply Lemma \ref{lem:reduce} to obtain a $\delta^2$-reduced induced subgraph $G' = G[U', R]$ with $|U'|=m'$. In particular, for $x \ge 1 + 1/v(H)$ we have
    $$
        m' \cdot d_\mathrm{avg}(U')^x \ge \delta^x m \cdot d_\mathrm{avg}(U)^x.
    $$
    Dividing each side by $r^x$, we get
    \begin{equation} \label{eq:reduced}
        m' (q')^x \ge \delta^x m q^x,
    \end{equation}
    where $q' = e(G') / (m' r)$ is the density of $G'$. By Lemma \ref{lemma:drc_count_1} and Claim \ref{claim:copies_T}, the number of copies of $H$ in $G'$ induced in $\Gamma$ is at least 
    \[
    \gamma \frac{m'}{2} (m')^ar^b (q')^{e(H)} = \frac{\gamma}{2} r^b \left(m' \cdot (q')^{e(H)/(a+1)} \right)^{a+1} \ge \frac{\gamma \delta^{e(H)}}{2} r^b \left(m q^{e(H) / (a+1)}\right)^{a+1} = \gamma_1 m^{a+1} r^b q^{e(H)}.
    \]
    The first inequality follows from \eqref{eq:reduced} applied with $x = e(H) / (a+1) \ge  1 + 1 / v(H)$.
    
    {\bf Inductive step:} Assume the conclusion of the lemma holds when $H$ has $\ell-1$ vertices complete to the other side. 
    
    Apply Lemma \ref{lem:reduce} to obtain a $\delta^2$-reduced induced subgraph $G' = G[U', R]$ with $|U'|=m'$. By Lemma \ref{lemma:drc_count_1}, at least $m'/2$ vertices $u\in U'$ are such that $u$ is $(\beta,\eps,k)$-good for all $k\le d$. In particular, for at least $(1-\beta)|N_{G'}(u)|$ vertices $x\in N_{G'}(u)$, we have
    $$
        |N_{G'}(x)\setminus N_\Gamma(u)| \ge \eps m' q',
    $$
    where $q' = e(G') / (m'r)$.

    Let $H'$ be the graph obtained from $H$ by removing one of $\ell$ vertices in $A$ complete to $B$. Let $G''$ be the induced subgraph of $G$ between $N_{G'}(u)$ and $U' \setminus N_\Gamma(u)$. We then have 
    \begin{align*}
        e(G'') &= \sum_{x\in N_{G'}(u)}|N_{G'}(x)\setminus N_\Gamma(u)| \ge (1-\beta) |N_{G'}(u)| \cdot \eps m' q',
    \end{align*}
    thus 
    \begin{equation} \label{eq:q_density}
        q'' \ge (1 - \beta) \eps q'. 
    \end{equation}
    Towards obtaining more precise estimate on the density $q''$ of $G''$, we have
    \begin{align*}
        \frac{e(G'')}{|N_{G'}(u)|(m')^{1-1/d}} &\ge (1-\beta) \eps \frac{m' q'}{(m')^{1-1/d}} \\
        &\ge (1-\beta)\eps \frac{d_\mathrm{avg}(U')(m')^{1/d}}{r} \\
        &= (1-\beta)\eps \frac{d_\mathrm{avg}(U)m^{1/d}}{r} \left(\frac{d_\mathrm{avg}(U')^d m'}{d_\mathrm{avg}(U)^d m}\right)^{1/d} \\
        &\ge (1-\beta)\eps\delta \frac{d_\mathrm{avg}(U)m^{1/d}}{r} = (1 - \beta) \eps \delta q m^{1/d} \ge C_\ell (1 - \beta) \eps \delta t^{1/d},
    \end{align*}
    where the third inequality follows from the property of Lemma \ref{lem:reduce} applied with $x = d$. Hence, the edge density $q''$ of $G''$ satisfies $q'' \ge C_\ell (1-\beta)\eps\delta (m')^{-1/d} t^{1/d}$. Our choice of $C_\ell$ guarantees that $C_\ell (1-\beta)\eps \delta \ge C_{\ell-1}$. Moreover, 
    \[
    |N_{G'}(u)| \ge \delta d_\mathrm{avg}(U') \ge \delta^{2} d_\mathrm{avg}(U) = \delta^{2} r q \ge \delta^{2}  \cdot C'_\ell t(m/t)^{\ell/d} \cdot C_\ell (t/m)^{1/d},
    \]
    which is at least $C'_{\ell-1}t(m/t)^{(\ell-1)/d}$ for our choice of $C'_\ell$. 
    Then, by the inductive hypothesis, we have that the number of copies of $H'$ in $G''$ is at least 
    \begin{align*}
        &\gamma_{\ell-1} (m')^{a+\ell-1} |N_{G'}(u)|^{b} (q'')^{e(H)-b} \\
        &\stackrel{\eqref{eq:q_density}}{\ge} \gamma_{\ell-1} (1 - \beta)^{e(H)} \eps^{e(H)} \cdot (m')^{a+\ell-1} \cdot |N_{G'}(u)|^{b} \cdot (q')^{e-b} \\
        &\ge \gamma_{\ell-1} (1 - \beta)^{e(H)} \eps^{e(H)} \cdot (m')^{a+\ell-1} \cdot \delta^{2b} d_\mathrm{avg}(U')^b  \cdot (d_\mathrm{avg}(U')/r)^{e(H)-b}\\
        &\ge \gamma' \cdot m^{a+\ell-1} \cdot d_\mathrm{avg}(U)^{e(H)} r^{b-e(H)} \\
        &= \gamma' m^{a+\ell-1}r^bq^{e(H)},
    \end{align*}
    where $\gamma' = \gamma_{\ell-1} (1 - \beta)^{e(H)} \eps^{e(H)} \delta^{2b}$. Summing over vertices $u\in U'$ such that $N_{G'}(u)$ is $(\beta,\eps,k)$-good for all $k\le d$, we obtain that the number of copies of $H$ in $G$ which are induced in $\Gamma$ is at least 
    $$
        (m/2) \gamma' m^{a+\ell-1}r^bq^{e(H)} \ge \gamma_\ell  m^{a+\ell}r^bq^{e(H)}.
    $$
    This completes the inductive step. 
\end{proof}

\section{Trees}\label{sec:tree}

The proof of Theorem \ref{thm:tree} is based on a delicate inductive argument which `glues' induced copies of subtrees of $T$. To this end, we start by introducing necessary notions. We only consider labelled copies of $T$, which we identify with an injective function $\phi \colon V(T) \rightarrow V(G)$. For a given copy $\phi$ of $T$ and a leaf $v \in V(T)$, we denote by $\phi - v$ a copy of $T - v$ given by $\phi\restriction_{V(T) - v}$.

\noindent {\bf Unique neighbors.} Given graphs $G$ and $\Gamma$ on the same vertex set, we say that a subset of vertices $S \subset V(G)$ is \emph{$c$-unique} if for every $v \in S$ we have
$$
    |N_G(v) \setminus N^+_\Gamma(S - v)| \ge c^{|S|-1} |N_G(v)|.
$$
We say that a copy $\phi$ of $T$ is $c$-unique if the set $\phi(T)$ is $c$-unique.

\noindent {\bf Extending $T - v$ to $T$.} Let $T$ be a tree with at least two vertices, $v \in V(T)$ a leaf in $T$ and $h_v \in V(T)$ the unique neighbor of $v$. Suppose $\lambda_{T - v}$ and $\lambda_T$ are probability measures over copies of $T - v$ and $T$, respectively, in some graph $G$. We say that $\lambda_T$ \emph{$(K,\eps, \kappa)$-extends} $\lambda_{T - v}$, for $K, \eps, \kappa > 0$, if the following two properties hold:
 \begin{equation} \label{prop:upper}
    \lambda_{T}(\phi) \le  \frac{K \lambda_{T - v}(\phi - v)}{ \deg_G(\phi(h_v))} \quad \text{for every $\phi \in \mathrm{supp}(\lambda_{T})$}
\end{equation}
and
\begin{equation} \label{prop:lower}
    \Pr_{\phi' \sim \lambda_{T - v}} \left[ \lambda_{T}(\phi') \ge \eps  \lambda_{T - v}(\phi') \right] \ge 1 - \kappa,
\end{equation}
where
$$
\lambda_T(\phi') = \lambda_T(\{ \phi \in \mathrm{supp}(\lambda_T) \colon \phi - v = \phi'\}).
$$
Note that \eqref{prop:upper} implies
\begin{equation} \label{prop:upper_phi-v}
    \lambda_T(\phi') \le K \lambda_{T - v}(\phi') \quad \text{ for every } \phi' \in \mathrm{supp}(\lambda_{T - v}).
\end{equation}

The following two lemmas are indirectly related to these properties, thus we state them now.

\begin{lemma} \label{lemma:good_vertices}
    Let $\Gamma$ be a $(c, t)$-sparse graph. Suppose $\pi_1$ and $\pi_2$ are probability measures over $V(\Gamma)$ such that $\pi_i(v) < \gamma / t$ for all $v \in V(\Gamma)$ and $i \in \{1, 2\}$, for some $0 < \gamma < 1$. Define the \emph{good} set of vertices with respect to $\pi_2$ as
    $$
        D(\pi_2) = \{x \in V(\Gamma) \colon \pi_2(\{ y \in V(\Gamma) \colon (x, y) \not \in \Gamma\}) \ge c\}.
    $$
    Then
    $$
        \Pr_{x \sim \pi_1}\left[ x \in D(\pi_2) \right] > 1 - \gamma.
    $$
\end{lemma}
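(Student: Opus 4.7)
The plan is to argue by contradiction. Suppose $\pi_1(B) \ge \gamma$, where $B := V(\Gamma) \setminus D(\pi_2)$, and I aim to derive a contradiction from the $(c,t)$-sparseness of $\Gamma$. The first observation is that $|B| > t$: since $\pi_1(B) = \sum_{v \in B} \pi_1(v) < |B| \cdot \gamma/t$, the assumption $\pi_1(B) \ge \gamma$ forces $|B| > t$, so $B$ is admissible for invoking $(c,t)$-sparseness.

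Next I will use a layer-cake decomposition of $\pi_2$. For $\tau \in (0, \gamma/t)$ set $L_\tau := \{v \in V(\Gamma) : \pi_2(v) \ge \tau\}$; then $\pi_2(A) = \int_0^{\gamma/t} |A \cap L_\tau| \, d\tau$ for any $A \subseteq V(\Gamma)$, and in particular $\int_0^{\gamma/t} |L_\tau| \, d\tau = 1$. Call $\tau$ \emph{good} if $|L_\tau| \ge t$ and \emph{bad} otherwise. Since $\tau |L_\tau| \le \sum_{v \in L_\tau} \pi_2(v) \le 1$ gives $|L_\tau| \le 1/\tau$, the set of bad thresholds is contained in an interval of Lebesgue length at most $\gamma/t$, and consequently $\int_{\text{bad}} |L_\tau| \, d\tau \le t \cdot (\gamma/t) = \gamma$.

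I will then estimate $\Sigma := \sum_{x \in B} \pi_1(x) \pi_2(N_\Gamma(x))$ from two sides. From the definition of $B$, $\pi_2(N_\Gamma(x)) > 1 - c$ for $x \in B$, so $\Sigma > (1-c)\pi_1(B) \ge (1-c)\gamma$. For the upper bound I exchange sum and integral to rewrite $\Sigma = \int_0^{\gamma/t} \sum_{x \in B} \pi_1(x) |N_\Gamma(x) \cap L_\tau| \, d\tau$, and then bound the inner $\pi_1$-weighted edge sum via $(c,t)$-sparseness. For good $\tau$, the sparseness condition applied to $(B, L_\tau)$ (both of size $\ge t$) yields $e(B, L_\tau) \le (1-c)|B||L_\tau|$, which combined with $\pi_1(v) < \gamma/t$ gives a weighted upper bound of order $(1-c)(\gamma/t)|B||L_\tau|$; for bad $\tau$, enlarging $L_\tau$ to a size-$t$ superset yields an analogous bound. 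Summing across $\tau$ and inserting $|B| \ge \pi_1(B) \cdot t/\gamma$ from the first step is designed to produce an upper bound that, after cancellation, contradicts $\Sigma > (1-c)\pi_1(B)$.

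The main obstacle I expect is that the good-region and bad-region estimates leave a slack of order $\gamma \cdot \pi_1(B)$ that precisely matches the margin available from the lower bound. Closing this gap relies on exploiting both strict inequalities in the setup --- $\pi_i(v) < \gamma/t$ and $\pi_2(V \setminus N_\Gamma(x)) < c$ --- and on carefully choosing between $\pi_1$-weighted and unweighted versions of the $(c,t)$-sparseness bound in the two regions, so that the contribution from the bad thresholds is strictly dominated rather than merely matched.
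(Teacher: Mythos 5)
The central difficulty you flag at the end is a genuine obstruction, not a matter of bookkeeping: the layer-cake decomposition of $\pi_2$ cannot close the gap, and the remedies you suggest (exploiting strict inequalities, switching between weighted and unweighted sparseness bounds) do not rescue it.

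To see that the slack is macroscopic, take $B$ with $|B| \ge t$, pass to the uniform distribution on $B$ (which is the cleanest version of your argument), and write $m = \int_{\tau\text{ bad}} |L_\tau|\,d\tau$ for the mass carried by small level sets, so $0 \le m < \gamma$. For good $\tau$ you get $\tfrac{e(B,L_\tau)}{|B|} \le (1-c)|L_\tau|$, and for bad $\tau$ the best available bound is the trivial $\tfrac{e(B,L_\tau)}{|B|} \le |L_\tau|$, since $(c,t)$-sparseness places no constraint on the bipartite density between $B$ and a set of size less than $t$. Integrating, using $\int_{\text{good}}|L_\tau|\,d\tau = 1-m$, gives
\[
\frac{1}{|B|}\sum_{x\in B} \pi_2\bigl(N_\Gamma(x)\bigr)
\;\le\; (1-c)(1-m) + m \;=\; 1 - c + cm,
\]
while the assumed lower bound is strictly greater than $1-c$. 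These two estimates are \emph{consistent} whenever $m > 0$, so there is no contradiction. Moreover the extremal scenario is realizable: $\pi_2$ may place mass just under $\gamma/t$ on each of $t-1$ vertices all adjacent to everything in $B$ (a set of size less than $t$ is unconstrained by sparseness) and spread the remaining mass so that the sparseness bound is tight on the light part. In that case $m$ is genuinely of order $\gamma$, the upper bound equals $1-c+cm$, and the strict inequalities in the hypotheses ($\pi_i(v) < \gamma/t$, $\pi_2(\overline{N_\Gamma(x)}) < c$) contribute only an arbitrarily small slack, far below the order-$\gamma$ gap you need to close. Replacing $\pi_1$ by its $\gamma/t$ pointwise bound, or keeping it weighted and layer-caking both measures, changes the constants but not the conclusion: you are left with an additive $O(\gamma)$ error from the small level sets.

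The structural problem is that the layer-cake decomposition expresses $\pi_2$ as a mixture of uniform distributions on level sets, and some of those sets are smaller than $t$. The paper's proof replaces this with the decomposition from pseudorandomness (Vadhan's lemma, \cite[Lemma 6.10]{V}): since $\max_v \pi_2(v) < \gamma/t$, $\pi_2$ is a convex combination of uniform distributions on sets of size exactly $\lfloor t/\gamma\rfloor \ge t$. Then \emph{every} component is admissible for $(c,t)$-sparseness, and averaging against uniform on a size-$\ge t$ subset $B_1 \subseteq B$ gives $\E\bigl[\mathbb{I}[(x,y)\in\Gamma]\bigr] \le 1-c$ with no error term, contradicting the lower bound $>1-c$. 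If you want to keep a decomposition-based argument you should switch to this one; the level-set decomposition is the wrong tool here because it does not control the support size of its components.
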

\begin{proof}
	Assume for the sake of contradiction that $\Pr_{x \sim \pi_1}\left[ x \in D(\pi_2) \right] \le 1 - \gamma$. Since $\pi_1(v) < \gamma/t$ for all $v$, there is a set $B_1\subseteq \Gamma$ of size at least $t$ for which $\pi_2(\{y\in V(\Gamma): (x,y)\notin \Gamma\}) < c$, and hence $\pi_2(\{y\in V(\Gamma): (x,y)\in \Gamma\}) > 1-c$, for all $x\in B_1$. Thus, letting $U_1$ denote the uniform distribution on $B_1$, 
\[
	\mathbb{E}_{x_1\sim U_1, x_2\sim \pi_2} [\mathbb{I}((x,y)\in \Gamma)] > 1-c.
\]

    Recall the standard fact that, any distribution $P$ on a finite set $V$ with $\max_{v\in V} P(v) \le \theta$ can be written as a convex combination of distributions, each of which is uniform over its support (which is a subset of $V$) of size $\lfloor 1/\theta\rfloor$ (see \cite[Lemma 6.10]{V}). Applying this fact to $\pi_2$ with $\theta = \gamma/t$, and by averaging, we obtain that there is a subset $B_2\subseteq \Gamma$ of size $\lfloor t/\gamma\rfloor \ge t$ such that, for $U_2$ the uniform distribution on $B_2$, 
\[
	\mathbb{E}_{x_1\sim U_1, x_2\sim U_2} [\mathbb{I}((x,y)\in \Gamma)] > 1-c.
\]
However, this contradicts the $(c,t)$-sparse assumption on $\Gamma$. 	
\end{proof}

\begin{lemma} \label{lemma:good_vertices_2}
    Let $\Gamma$ be a $(c, t)$-sparse graph. Suppose $\pi_1$ and $\pi_2$ are probability measures over $V(\Gamma)$ such that $\pi_i(v) < \gamma / t$ for all $v \in V(\Gamma)$ and $i \in \{1, 2\}$, for some $0 < \gamma < 1$. Moreover, let $\mathcal{S} = \{S_v \colon v \in \mathrm{supp}(\pi_2) \}$ be a family of subsets of $V(\Gamma)$, each of size $|S_v| \ge t$. Denote with $D(\pi_2, \mathcal{S})$ the \emph{good} set of vertices with respect to $\pi_2$ and the family $\mathcal{S}$, defined as
    $$
        D(\pi_2, \mathcal{S}) = \{x \in V(\Gamma) \colon \pi_2(\{ y \in V(\Gamma) \colon |S_y \setminus N_\Gamma(x)| < c |S_y| \}) < \sqrt{\gamma} \}.
    $$
    Then
    $$
        \Pr_{x \sim \pi_1}\left[ x \in D(\pi_2, \mathcal{S}) \right] > 1 - \sqrt{\gamma}.
    $$
\end{lemma}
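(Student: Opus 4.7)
I would proceed by contradiction, in the spirit of Lemma \ref{lemma:good_vertices}, but exploiting the extra $\sqrt{\gamma}$ threshold built into the definition of $D(\pi_2, \mathcal{S})$ to get a two-level averaging. Suppose for contradiction that $\pi_1(X_B) \ge \sqrt{\gamma}$, where $X_B := V(\Gamma) \setminus D(\pi_2, \mathcal{S})$. Since $\pi_1(v) < \gamma/t$ for every $v$, this forces $|X_B| > t/\sqrt{\gamma}$. For each $x \in X_B$ set $Y_x := \{y : |S_y \setminus N_\Gamma(x)| < c|S_y|\}$; the definition of $X_B$ gives $\pi_2(Y_x) \ge \sqrt{\gamma}$.

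The crucial step is a Fubini swap followed by a single weighted averaging. For each $y \in \mathrm{supp}(\pi_2)$ set $T_y := \{x \in X_B : y \in Y_x\}$, so that
$$\sum_y \pi_2(y)\,|T_y| \;=\; \sum_{x \in X_B} \pi_2(Y_x) \;\ge\; \sqrt{\gamma}\,|X_B|.$$
Hence some $y^\star \in \mathrm{supp}(\pi_2)$ satisfies $|T_{y^\star}| \ge \sqrt{\gamma}\,|X_B| > t$. By construction every $x \in T_{y^\star}$ obeys $|S_{y^\star} \cap N_\Gamma(x)| > (1-c)|S_{y^\star}|$, whence $e(T_{y^\star}, S_{y^\star}) > (1-c)\,|T_{y^\star}|\,|S_{y^\star}|$ with $|T_{y^\star}|, |S_{y^\star}| \ge t$, contradicting the $(c,t)$-sparseness of $\Gamma$.

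The only delicate point is the size estimate $|T_{y^\star}| > t$: the two factors of $\sqrt{\gamma}$ balance precisely, once in boosting $|X_B|$ beyond $t/\sqrt{\gamma}$ and once in being absorbed when projecting onto a single coordinate $y^\star$. In contrast with the proof of Lemma \ref{lemma:good_vertices}, no explicit decomposition of $\pi_1$ or $\pi_2$ into uniform sub-distributions is needed here, because the $(c,t)$-sparseness is applied directly to the concrete pair of vertex subsets $(T_{y^\star}, S_{y^\star})$ supplied by the averaging.
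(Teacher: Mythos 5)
Your proof is correct and is essentially the contrapositive form of the paper's argument: where the paper applies $(c,t)$-sparseness pointwise (for each $y$, at most $t$ vertices $x$ satisfy $x \triangleright y$), converts this to $\Pr_{x\sim\pi_1,\,y\sim\pi_2}[x \triangleright y] < \gamma$, and finishes with Markov's inequality, you instead assume $\pi_1(X_B) \ge \sqrt{\gamma}$, Fubini-swap to locate a single $y^\star$ via pigeonhole, and then apply sparseness once to the concrete witnessing pair $(T_{y^\star}, S_{y^\star})$. Both arguments hinge on the same sparseness fact and the same two-level averaging, so the approach is the same in substance; the paper's phrasing via Markov is marginally more direct, while yours makes the witnessing dense bipartite pair explicit.
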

\begin{proof}
    For $x, y \in V(\Gamma)$, write $x \triangleright y$ as a shorthand for $|S_y \setminus N_\Gamma(x)| < c |S_y|$. As $\Gamma$ is $(c, t)$-sparse and $|S_y| \ge t$, there are at most $t$ vertices $x$ such that $x \triangleright y$, for any $y \in V(\Gamma)$. Therefore, $\Pr_{x,y}[x \triangleright y] < \gamma$ with $x \sim \pi_1$ and $y \sim \pi_2$ sampled independently. Let $B = V(G) \setminus D(\pi_2, \mathcal{S}) = \{x\in V(\Gamma): \pi_2(\{y\in V(\Gamma): x \triangleright y\})\ge \sqrt{\gamma}\}$. By Markov's inequality, 
    $$ 
        \Pr_{x\sim \pi_1}[x\in B] \le \frac{\Pr_{x,y}[x\triangleright y]}{\sqrt{\gamma}} < \sqrt{\gamma}.
    $$%
    Thus, 
    $$
        \Pr_{x \sim \pi_1}\left[ x \in D(\pi_2, \mathcal{S}) \right] > 1 - \sqrt{\gamma}.
    $$
\end{proof}

\noindent {\bf Peeling a tree.} Let $T$ be a (labelled) tree, and let $\mathcal{S}(T)$ denote the family of all subtrees of $T$. A mapping $\nu \colon \mathcal{S}(T) \rightarrow 2^{V(T)}$ is \emph{$T$-peeling} if the following holds: (i) $\nu(\{v\}) = \{v\}$ for $v \in V(T)$, and (ii) for $|T'| \ge 2$ we have $\nu(T') = \{u, w\}$ where $u$ and $w$ are leaves in $T'$ such that $u \in \nu(T' - w)$ and $w \in \nu(T' - u)$. 

\begin{lemma} \label{lemma:peeling}
    For every tree $T$ there exists a $T$-peeling mapping.
\end{lemma}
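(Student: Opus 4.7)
My plan is to give an explicit construction based on a fixed linear ordering of the vertices. Let $\prec$ be any total order on $V(T)$. For each subtree $T'\subseteq T$, I set $\nu(\{v\})=\{v\}$ when $|V(T')|=1$, and otherwise let $\nu(T')$ consist of the two $\prec$-smallest leaves of $T'$. Condition (i) in the definition of $T$-peeling is immediate, and the two chosen vertices are leaves of $T'$ by construction, so it remains only to verify the consistency relation in condition (ii).

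Write the leaves of $T'$ in $\prec$-order as $\ell_1 \prec \ell_2 \prec \dots \prec \ell_k$ (with $k\ge 2$), so $\nu(T')=\{\ell_1,\ell_2\}$. The key observation is that deleting a single leaf $v$ from $T'$ modifies the leaf set in a very restricted way: one loses $v$, and one gains at most one new leaf, namely the unique neighbor $p$ of $v$ in $T'$, and only when $p$ had degree exactly $2$ in $T'$. Applying this with $v=\ell_2$, the leaves of $T'-\ell_2$ are $\{\ell_1,\ell_3,\dots,\ell_k\}$ together with at most one additional vertex. Among the original leaves of $T'$ other than $\ell_2$, the vertex $\ell_1$ has the smallest $\prec$-label, so in $T'-\ell_2$ at most one leaf (the potential new leaf) can precede $\ell_1$ in the $\prec$-order. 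Hence $\ell_1$ is among the two $\prec$-smallest leaves of $T'-\ell_2$, which gives $\ell_1\in \nu(T'-\ell_2)$. The symmetric argument, applied with $v=\ell_1$, yields $\ell_2\in \nu(T'-\ell_1)$.

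I do not expect any serious obstacle with this approach; the main pitfall is to avoid constructions that look natural but fail. For example, defining $\nu(T')$ to be the pair of endpoints of a longest path in $T'$ breaks down in symmetric subtrees such as a spider with three legs of equal length, where several distinct leaf pairs realize the diameter and any local tie-breaking rule can easily become inconsistent with the peeling relation on nested subtrees. Fixing a single global linear order on $V(T)$ sidesteps this, since the ``two $\prec$-smallest leaves'' criterion is stable under removing one leaf — precisely the stability that condition (ii) requires.
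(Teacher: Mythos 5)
Your proof is correct, and it takes a genuinely different route from the paper's. The paper argues by induction on $|T|$: pick a leaf $v$ of $T$, recursively obtain a $(T-v)$-peeling $\nu_v$, reuse $\nu_v$ on subtrees avoiding $v$, and on subtrees $T'$ containing $v$ set $\nu(T') = \{v, w\}$ with $w \in \nu_v(T'-v)$ chosen not adjacent to $v$ (such a $w$ exists when $|T'|\ge 3$ since $\nu_v(T'-v)$ has two elements, at most one of which can be the unique $T'$-neighbour of $v$; the case $|T'|=2$ is handled separately by the base case). Your construction is instead explicit and non-recursive: fix a global linear order $\prec$ and always take the two $\prec$-smallest leaves, and then verify the consistency condition directly using the single combinatorial fact that deleting one leaf creates at most one new leaf. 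This buys you a cleaner, more transparent verification with no dependence on the recursion's bookkeeping, at the cost of nothing; the only minor things worth stating explicitly are the observations your argument uses implicitly — that when $|T'|\ge 3$ no two leaves of $T'$ are adjacent, so all leaves of $T'$ other than the deleted one remain leaves of $T'-v$, and that the $|T'|=2$ case reduces to condition (i). Both proofs are short and valid.
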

\begin{proof}
    We prove the lemma by induction on the size of $T$. For $|T| \le 2$, the claim trivially holds. Consider some $|T| \ge 3$, and pick a leaf $v \in T$. Set $T_v = T - v$, and let $\nu_v$ be a $T_v$-peeling mapping. For a subtree $T' \subseteq T$ which does not contain $v$ we set $\nu(T') = \nu_v(T')$. For a subtree $T' \subseteq T$ which contains $v$, take $w \in \nu_v(T' - v)$ which is not adjacent to $v$ and set $\nu(T) = \{v, w\}$.
\end{proof}

We are now ready to prove Theorem \ref{thm:tree}.

\begin{proof}[Proof of Theorem \ref{thm:tree}]
    By sequentially removing vertices of degree less than $Ct$ in $G$, we eventually end up with a graph with $n' > 1$ vertices, at least $Ctn'$ edges, and minimum degree at least $Ct$. Therefore, without loss of generality we may assume that $G$ has minimum degree at least $Ct$.

    Consider a $T$-peeling mapping $\nu$. We show that for each tree $T' \subseteq T$ there exists a probability measure $\lambda_{T'}$ over $c$-unique copies of $T'$ in $G$ which are induced in $\Gamma$, such that $\lambda_{T'}$ $(K_\ell, \eps_\ell, \kappa_\ell)$-extends $\lambda_{T' - v}$ for each $T' \subseteq T$ with at least two vertices and $v \in \nu(T')$, where $\ell = |V(T')|$. As the parameters depend only on the size of a tree, we simply write $\lambda_{T'}$ extends $\lambda_{T' - v}$. Here $K_i > 1$ and $0 < \eps_i, \kappa_i < 1$, for $2 \le i \le |V(T)|$, are defined as follows:
    \begin{alignat*}{2}
        &K_2 = 2, \eps_2 = 1/3, \kappa_{|V(T)|} = 1/2 \\
        &\eps_i = c / (2 K_{i-1}), K_i = 2 K_{i-1} / (\eps_{i-1}^2 c) && \quad \text{ for } i \ge 3 \\
        &\kappa_i = \kappa_{i+1} / (2 K_i) && \quad \text{ for } 2 \le i < |V(T)|,
    \end{alignat*}
    and $C$ is chosen to be sufficiently large. We prove the existence of such measures by induction on the size of $T'$, handling the base cases $T'$ has one or two vertices first.
    
    \medskip
    \textbf{$T'$ is a single vertex.} Let $T' = \{v\}$. We define $\lambda_{v}$ as
    $$
        \lambda_{v}(x) = \frac{\deg_G(x)}{2 e(G)}.
    $$

    \medskip
    \textbf{$T'$ is an edge.} Let $T'$ be an edge $\{u, w\}$, and note that necessarily $\nu(T') = \{u, w\}$ (recall that $T$ is a labelled tree, so we distinguish between $u$ and $w$). Consider the following process: Sample $x_u \sim \lambda_{u}$, and then sample $x_w \in N_G(x_u)$ uniformly at random from $N_G(x_u)$. If $|N_G(x_u) \setminus N_\Gamma(x_w)| \ge c |N_G(x_u)|$ and $|N_G(x_w) \setminus N_\Gamma(x_u)| \ge c |N_G(x_w)|$, output a copy $\phi = \{u \mapsto x_u, w \mapsto x_w\}$ of $T'$ and otherwise reject. When we output, $\phi$ is a $c$-unique copy of $T'$, and as $T'$ contains a single edge it is trivially induced in $\Gamma$. Define $\lambda_{T'}$ to be the corresponding output distribution. We will make use of the fact that first sampling $x_w \sim \lambda_w$ and then $x_u \in N_G(x_w)$ yields the same probability distribution. Indeed, the distribution can alternatively be described as sampling a uniformly random edge $\{x_u,x_w\}$ in $G$, then accept if $|N_G(x_u) \setminus N_\Gamma(x_w)| \ge c |N_G(x_u)|$ and $|N_G(x_w) \setminus N_\Gamma(x_u)| \ge c |N_G(x_w)|$. 

    For each $x \in V(\Gamma)$, as $\Gamma$ is $(c, t)$-sparse and $|N_G(x)| \ge Ct$, we have $|N_G(x) \setminus N_\Gamma(y)| \ge c |N_G(x)|$ for all but at most $t$ vertices $y \in N_G(x)$. As we can sample $T'$ starting with either $x_u$ or $x_w$, the above claim (with $x=x_u$ or $x=x_w$)  implies that $\phi$ is $c$-unique with probability at least $1 - 2 t / (Ct) > 1 - \kappa_2/2$. In other words, the probability $p_{\mathrm{succ}}$ of successfully outputting a copy of $T'$ is at least $1 - \kappa_2/2 > 1/2$. We now verify that $\lambda_{T'}$ $(K_2, \eps_2, \kappa_2)$-extends $\lambda_{T' - u} = \lambda_w$ and $\lambda_{T' - w} = \lambda_u$.
    
    Property \eqref{prop:upper} states that for every (ordered) edge $e = (x_u,x_w) \in \mathrm{supp}(\lambda_{T'})$ we have
    $$
        \lambda_{T'}(e) = \frac{1}{p_\mathrm{succ}} \cdot \frac{\lambda_{u}(x_u) }{\deg_G(x_u)} \le \frac{2 \lambda_u(x_u)}{\deg_G(x_u)},
    $$
    which follows from the lower bound on the success probability and the definition of the process. The analogous statement holds for $w$ in place of $u$.

    Property \eqref{prop:lower} states that
    \begin{equation} \label{eq:x_u_lower}
        \Pr_{x_u \sim \lambda_u} \left[ \lambda_{T'}(u \mapsto x_u) \ge \lambda_u(x_u) / 3 \right] \ge 1 - \kappa_2,
    \end{equation}
    where $\lambda_{T'}(u \mapsto x_u) = \lambda_{T'}(\{(x_u, x_w) \in \mathrm{supp}(\lambda_{T'})\})$. Due to symmetry, this implies the analogous statement for $\lambda_w$. Let $B \subseteq V(G)$ denote the set of vertices such that for each $x \in B$ there are less than $\deg_G(x)/2$ vertices $y \in N_G(x)$ for which $|N_G(y) \setminus N_\Gamma(x)| \ge c |N_G(y)|$. For each $x \not \in B$, there are at least $\deg_G(x)/2 - t > \deg_G(x)/3$ vertices $y \in N_G(x)$ such that $(x,y)$ is $c$-unique. Therefore, for $x_u \not \in B$ we have
    $$
        \lambda_{T'}(u \mapsto x_u) \ge \sum_{\substack{y \in N_G(x_u) \\ (x,y) \text{ $c$-unique}}} \lambda_{u}(x_u) / \deg_G(x_u) \ge \lambda_u(x_u) / 3.
    $$
    To prove \eqref{eq:x_u_lower}, it suffices to show $\lambda_u(B) \le \kappa_2$. Suppose otherwise, then
    \begin{align*}
        1 - p_{\mathrm{succ}} &= \Pr_{x_u, x_w}\left[ (x_u, x_w) \text{ not } c\text{-unique} \right] \\
        & \ge \Pr_{x_u \sim \lambda_u}(x_u \in B)  \Pr_{x_u, x_w}\left[ (x_u, x_w) \text{ not } c\text{-unique} \mid x_u \in B \right] > \kappa_2 / 2,
    \end{align*}
    where the joint probability $(x_u, x_w)$ is as given by the process. This is a contradiction with the previously established lower bound on $p_{\mathrm{succ}}$.

    \medskip
    \textbf{$T'$ has at least three vertices.} Suppose now $T'$ has $\ell \ge 3$ vertices and the statement holds for every proper subtree $T'' \subset T'$. Let $\nu(T') = \{u, w\}$, and let $h_u$ and $h_w$ be unique neighbors of $u$ and $w$ in $T'$. Note that $h_u$ and $h_w$ could be the same vertex. Set $R = T' - \nu(T)$, $R_u = T' - w$ and $R_w = T' - u$. By the inductive assumption and the definition of a $T$-peeling mapping, $\lambda_{R_u}$ and $\lambda_{R_w}$ extend $\lambda_R$.
    
    Consider the following process:
    \begin{enumerate}[(I)]
        \item \label{step:R} Sample $\rho \sim \lambda_R$. If $\lambda_{R_u}(\rho) < \eps_{\ell-1} \lambda_R(\rho)$ or $\lambda_{R_w}(\rho) < \eps_{\ell-1} \lambda_R(\rho)$, reject.
        
        \item \label{step:x} Sample $\rho_u \sim \lambda_{R_u} \mid \rho$, that is, $\rho_u$ is sampled according to $\lambda_{R_u}$ conditioned on agreeing with $\rho$ on $R$. Similarly, independently sample $\rho_w \sim \lambda_{R_w} \mid \rho$. Let $x_u = \rho_u(u)$ and $x_w = \rho_w(w)$, and define $\phi$ to be a mapping given by $\rho \cup \{u \mapsto x_u, w \mapsto x_w\}$. Reject if any of the following events happen: (i) $x_u = x_w$, (ii) $\phi$ is not $c$-unique, or
        (iii) $x_u$ and $x_w$ are adjacent in $\Gamma$.
        
        \item Output the copy $\phi$ of $T'$.
    \end{enumerate}    
    Note that if we did not reject in step \ref{step:R} then $\lambda_{R_v}(\rho) > 0$ implies $\lambda_{R_v} \mid \rho$ is defined, for $v \in \{u, w\}$. Events (i)--(iii) ensure that $\phi$ is a $c$-unique copy of $T'$ in $G$ which is induced in $\Gamma$, thus we define $\lambda_{T'}$ to be the corresponding output distribution. It remains to show that the probability $p_{\mathrm{succ}}$ of outputting a copy of $T'$ in step (3) is non-zero and $\lambda_{T'}$ extends $\lambda_{T' - u} = \lambda_{R_w}$ and $\lambda_{T' - w} = \lambda_{R_u}$. 
    
    For brevity, throughout the proof we denote with $\pi_{v, \rho} = \lambda_{R_v} \mid \rho$ the probability measure corresponding to $x_v$ with respect to $\rho$, for $v \in \{u, w\}$.

    \begin{claim} \label{claim:pi}
        Suppose $\lambda_{R_u}(\rho) \ge \eps_{\ell-1} \lambda_R(\rho)$. Then for every $x \in N_G(\rho(h_u))$ we have 
        $$
            \pi_{u, \rho}(x) \le \frac{L}{\deg_G(\rho(h_u))} \le \frac{L}{C t}.
        $$
        where $L = K_{\ell-1} \eps_{\ell-1}^{-1}$. The analogous statement holds for replacing $u$ in each instance by $w$.
    \end{claim}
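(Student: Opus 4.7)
The plan is to unfold the definition of the conditional measure $\pi_{u,\rho}$ and apply the upper-bound property \eqref{prop:upper} of the extension $\lambda_{R_u}$ over $\lambda_R$. Writing $\rho_u^x := \rho \cup \{u \mapsto x\}$, by the definition of conditional probability,
$$
    \pi_{u,\rho}(x) = \frac{\lambda_{R_u}(\rho_u^x)}{\lambda_{R_u}(\rho)},
$$
where in the denominator $\lambda_{R_u}(\rho)$ stands for $\lambda_{R_u}(\{\rho_u \in \mathrm{supp}(\lambda_{R_u}) : \rho_u - u = \rho\})$, as in the notation of the paper.

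For the numerator, since $\rho_u^x - u = \rho$ and $\rho_u^x(h_u) = \rho(h_u)$, the extension property \eqref{prop:upper} applied to the pair $(\lambda_{R_u}, \lambda_R)$ yields
$$
    \lambda_{R_u}(\rho_u^x) \le \frac{K_{\ell - 1} \, \lambda_R(\rho)}{\deg_G(\rho(h_u))}.
$$
For the denominator, the hypothesis of the claim is precisely $\lambda_{R_u}(\rho) \ge \eps_{\ell-1} \lambda_R(\rho)$. Combining the two gives
$$
    \pi_{u,\rho}(x) \le \frac{K_{\ell - 1} \lambda_R(\rho) / \deg_G(\rho(h_u))}{\eps_{\ell - 1} \lambda_R(\rho)} = \frac{K_{\ell - 1} \eps_{\ell-1}^{-1}}{\deg_G(\rho(h_u))} = \frac{L}{\deg_G(\rho(h_u))}.
$$

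The second inequality is immediate: at the start of the proof we reduced to the case where $G$ has minimum degree at least $Ct$, so $\deg_G(\rho(h_u)) \ge Ct$ and hence $\pi_{u,\rho}(x) \le L/(Ct)$. The symmetric statement for $w$ follows by the same argument, swapping the roles of $u$ and $w$ throughout. There is no substantial obstacle here; the claim is essentially a bookkeeping consequence of the extension properties, and its purpose is to verify that the distributions $\pi_{u,\rho}$ and $\pi_{w,\rho}$ have the small-mass condition needed to later invoke \Cref{lemma:good_vertices} and \Cref{lemma:good_vertices_2} in the analysis of step \ref{step:x}.
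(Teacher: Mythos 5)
Your proof is correct and matches the paper's argument essentially verbatim: both unfold the conditional probability $\pi_{u,\rho}(x) = \lambda_{R_u}(\rho \cup \{u \mapsto x\}) / \lambda_{R_u}(\rho)$, bound the numerator via property \eqref{prop:upper} with $\phi - u = \rho$ and $\phi(h_u) = \rho(h_u)$, bound the denominator by the hypothesis $\lambda_{R_u}(\rho) \ge \eps_{\ell-1}\lambda_R(\rho)$, and finish with the minimum-degree reduction $\deg_G \ge Ct$. No meaningful divergence.
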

    \begin{proof}
        Consider some $x \in N_G(\rho(h_u))$ (otherwise $\pi_{u, \rho}(x) = 0$) and let $\rho_x = \rho \cup \{u \mapsto x\}$. Then, as $\lambda_{R_u}$ extends $\lambda_{R}$, we have
        $$
            \pi_{u, \rho}(x) = \frac{\lambda_{R_u}(\rho_x)}{\lambda_{R_u}(\rho)} \stackrel{\eqref{prop:upper}}{\le} \frac{ K_{\ell-1} \lambda_{R}(\rho) }{\deg_G(\rho(h_u)) } \cdot \frac{1}{\lambda_{R_u}(\rho)},
        $$
        thus the claim follows from $\lambda_{R_u}(\rho) \ge \eps_{\ell-1} \lambda_{R}(\rho)$ and the minimum degree assumption.
    \end{proof}
    
    \noindent {\bf Success probability $p_{\mathrm{succ}}$.} By \eqref{prop:lower}, the probability of rejecting in step \ref{step:R} is at most $2 \kappa_{\ell-1}$. Let us condition on $\rho$ not being rejected, that is, $\lambda_{R_u}(\rho) \ge \eps_{\ell-1} \lambda_R(\rho)$ and $\lambda_{R_w}(\rho) \ge \eps_{\ell-1} \lambda_R(\rho)$. By Claim \ref{claim:pi} we have $x_u = x_w$ with probability at most $L / C$. Next, as $\rho_u$ is a $c$-unique copy of $R_u$ and $\Gamma$ is $(c, t)$-sparse, for all but at most $(\ell-1)t$ choices of $x_w \in N_G(\rho(h_w))$ we have
    $$
        |N_G(\rho_u(v)) \setminus \left( N_\Gamma^+(\rho_u(R_u - v)) \cup N_\Gamma(x_w) \right)| \ge c^{\ell-1} |N_G(\rho_u(v))| \quad \text{for every } v \in R_u.
    $$
    We implicitly assume here $C > c^{-\ell}$, so that the right hand side of the previous inequality is at least $t$. Analogous statement holds for $x_u \in N_G(\rho(h_u))$. By Claim \ref{claim:pi} and the fact that $x_u$ and $x_w$ are sampled independently, we conclude that $\phi$ is not $c$-unique with probability at most $2 (\ell-1) L / C$. Finally, by Lemma \ref{lemma:good_vertices} applied with $\pi_{u, \rho}$ (as $\pi_1$) and $\pi_{w, \rho}$ (as $\pi_2$), $x_u$ and $x_w$ are not adjacent in $\Gamma$ with probability at least $(1 - L/C) c$. All together, for sufficiently large $C$ we have $p_{\mathrm{succ}} > c/2$.

    \medskip
    Next, we verify that $\lambda_{T'}$ $(K_{\ell}, \eps_{\ell}, \kappa_{\ell})$-extends $\lambda_{R_u}$ and $\lambda_{R_w}$. The two cases are identical, thus we only do it for $\lambda_{R_u}$.

    \noindent {\bf Property \eqref{prop:upper}.} Consider some $\phi \in \mathrm{supp}(\lambda_{T'})$, and let $x_u = \phi(u)$ and $x_w = \phi(w)$. Then $\rho = \phi - \{u, w\}$ is not rejected in step \ref{step:R}, as otherwise $\phi$ would not be produced by the procedure. We have
    \begin{align*}
        \lambda_{T'}(\phi) &= \frac{1}{p_{\mathrm{succ}}} \cdot \lambda_{R}(\rho) \pi_{u, \rho}(x_u) \pi_{w, \rho}(x_w) \\
        &\le \frac{2}{c} \cdot \lambda_R(\rho) \frac{\lambda_{R_u}(\phi - w)}{\lambda_{R_u}(\rho)} \pi_{w, \rho}(x_w) \le \frac{2}{c \eps_{\ell-1}} \lambda_{R_u}(\phi - w) \pi_{w, \rho}(x_w),
    \end{align*}
    where the last inequality follows from $\lambda_{R_u}(\rho) \ge \eps_{\ell-1} \lambda_R(\rho)$. Claim \ref{claim:pi} applied on $\pi_{w, \rho}(x_w)$ gives \eqref{prop:upper}.
    
    \noindent {\bf Property \eqref{prop:lower}.} Consider a copy $\rho_u \in \mathrm{supp}(\lambda_{R_u})$ of $R_u$. Set $x_u = \rho_u(u)$ and let $\rho = \phi_u - u$ be a corresponding copy of $R$. For $x \in \mathrm{supp}(\pi_{w, \rho})$ define $S_x = N_G(x) \setminus N^+_\Gamma(\rho(R))$, and denote the family of all such subsets by $\mathcal{S}$. By the fact that $\rho \cup \{w \mapsto x\}$ is $c$-unique for every $x \in \mathrm{supp}(\pi_{w, \rho})$, we have $|S_x| \ge t$. Suppose 
    \begin{equation} \label{eq:good_phi_u}
        \lambda_{R_u}(\rho) \ge \eps_{\ell - 1} \lambda_R(\rho) \quad \text{ and } \quad x_u \in D(\pi_{w, \rho}) \cap D(\pi_{w, \rho,}, \mathcal{S}),
    \end{equation}
    where $D(\pi_{w, \rho})$ is as defined in Lemma \ref{lemma:good_vertices}, and $D(\pi_{w, \rho}, \mathcal{S})$ as defined in Lemma \ref{lemma:good_vertices_2}. Then, along similar lines as when estimating $p_{\mathrm{succ}}$, we have
    \begin{equation} \label{eq:rho_uw_supp}
        \Pr_{x_w \sim \pi_{w, \rho}}\left[ \rho_u \cup \{w \mapsto x_w\} \in \mathrm{supp}(\lambda_{T'}) \right] \ge c - \frac{\ell t L}{Ct} > c / 2,
    \end{equation}
    for $C$ sufficiently large. Briefly, $c$ is the bound on the probability that $x_w$ and $x_u$ are not adjacent in $\Gamma$ (coming from $x_u \in D(\pi_{w,\rho})$), and the remaining term takes into account the probability that $x_u$ and $x_w$ are distinct and $\phi_u \cup \{w \mapsto x_w\}$ is $c$-unique (recall that $\rho_u$ is $c$-unique by the definition). Therefore,
    \begin{align*}
        \lambda_{T'}(\rho_u) &\ge \lambda_{R}(\rho) \pi_{u, \rho}(x_u) \pi_{w, \rho}(\{ x_w \colon \rho_u \cup \{w \mapsto x_w\} \in \mathrm{supp}(\lambda_{T'}) \}) \\
        &\stackrel{\eqref{eq:rho_uw_supp}}{\ge} \lambda_R(\rho) \frac{\lambda_{R_u}(\rho_u)}{\lambda_{R_u}(\rho)} \cdot c/2 \stackrel{\eqref{prop:upper_phi-v}}{\ge} \frac{c}{2 K_{\ell-1}} \lambda_{R_u}(\rho_u).
    \end{align*}
    In the last inequality we used the assumption that $\lambda_{R_u}$ extends $\lambda_R$. To summarise, if $\rho_u$ satisfies \eqref{eq:good_phi_u} then $\lambda_{T'}(\rho_u) \ge \eps_{\ell} \lambda_{R_u}(\rho_u)$ for $\eps_\ell = c / (2 K_{\ell-1})$. We estimate the probability that $\rho_u$ does not satisfy \eqref{eq:good_phi_u}:
    \begin{multline*}
        \Pr_{\rho_u \sim \lambda_{R_u}}\left[ \lambda_{R_u}(\rho) < \eps_{\ell - 1} \lambda_R(\rho) \lor x_u \not \in D(\pi_{w, \rho}) \cap D(\pi_{w, \rho,}, \mathcal{S}) \right] = \\
        \Pr_{\rho_u \sim \lambda_{R_u}}\left[ \lambda_{R_u}(\rho) < \eps_{\ell-1} \lambda_{R}(\rho) \right] + \Pr_{\rho_u \sim \lambda_{R_u}}\left[ x \not \in D(\pi_{w, \rho}) \cap D(\pi_{w, \rho,}, \mathcal{S}) \mid \lambda_{R_u}(\rho) \ge \eps_{\ell-1} \lambda_{R}(\rho) \right].
    \end{multline*}
    By \eqref{prop:lower} and \eqref{prop:upper_phi-v}, the first term is at most $\kappa_{\ell-1} K_{\ell-1}$. The second term is at most $L / C + \sqrt{L / C}$ by Lemma \ref{lemma:good_vertices} and Lemma \ref{lemma:good_vertices_2}. For $C$ sufficiently large and $\kappa_{\ell-1}$ sufficiently small with respect to $K_{\ell-1}$, we get a desired upper bound on $\rho \sim \lambda_{R_u}$ failing \eqref{prop:lower}. This finishes the proof.
\end{proof}

\section{Applications of Theorem \ref{thm:drc} for certain hereditary families} \label{sec:string}

In this section, we prove Theorem \ref{string} which states that $\textrm{ex}(\Gamma,\mathcal{S})=O_c(tn)$ for $\Gamma$ a $(c,t)$-sparse graph and $\mathcal{S}$ the family of string graphs. The following lemma from \cite{FPS} was proved using an iterative application of  Lee's separator lemma for string graphs. 

\begin{lemma}\label{denseinduced} (Lemma 3, page 224 of \cite{FPS})
For each $\varepsilon>0$ there is $C=C(\varepsilon)$ such that 
every string graph which has average degree $d$ has an induced subgraph on at most 
$Cd$ vertices which has average degree at least $(1-\varepsilon)d$. 
\end{lemma}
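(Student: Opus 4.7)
The plan is to prove this densification lemma by iteratively applying Lee's separator theorem for string graphs, which guarantees that every string graph with $m$ edges admits a balanced vertex separator of size $O(\sqrt{m})$ whose removal splits the remaining vertices into two sides each of size at most $2/3$ of the total. The idea is to repeatedly cut the current induced subgraph along such a separator and recurse on the denser side, stopping once the vertex count drops below $Cd$.

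Concretely, starting from $H := G$, I would iterate as follows: while $|V(H)| > Cd$, apply Lee's theorem to $H$ to obtain a separator $S$ of size $O(\sqrt{e(H)})$ with sides $A, B$ of size at most $2|V(H)|/3$, and replace $H$ by whichever of the induced subgraphs $H[A \cup S]$ or $H[B \cup S]$ has larger average degree. Every edge of $H$ appears in at least one of these two subgraphs (edges inside $H[S]$ appear in both), so by averaging the denser one $H'$ satisfies
\[
\frac{e(H')}{|V(H')|} \ \ge\ \frac{e(H)}{|V(H)| + |S|},
\]
and hence its average degree is at least the average degree of $H$ divided by $(1 + |S|/|V(H)|)$. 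Each iteration shrinks $|V(H)|$ to at most $2|V(H)|/3 + |S|$, which is itself at most $3|V(H)|/4$ whenever $|V(H)|$ is large compared to $\sqrt{e(H)}$; this is ensured while $|V(H)| > Cd$ for $C = C(\eps)$ chosen sufficiently large. The iteration therefore terminates after $L = O(\log(n/(Cd)))$ steps in an induced subgraph with at most $Cd$ vertices.

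The main obstacle is bounding the cumulative multiplicative loss in average degree $\prod_{i=0}^{L-1}(1 + |S_i|/|V(H_i)|) \le 1/(1 - \eps)$, equivalently $\sum_i |S_i|/|V(H_i)| = O(\eps)$. A naive bound using only $|S_i| = O(\sqrt{e(H_i)}) \le O(\sqrt{m})$ is insufficient: combined with the geometric decay of $|V(H_i)|$ from $n$ down to $\sim Cd$, it yields only $O(\sqrt{m}/(Cd))$ for the sum, a quantity that depends on $n$. The fix is an amortized analysis exploiting the invariant that the average degree stays within a factor $(1 - \eps)$ of $d$ throughout, which forces $e(H_i) \le |V(H_i)| \cdot d/2$ to decay geometrically in step with $|V(H_i)|$; consequently $|S_i| = O(\sqrt{e(H_i)})$ also shrinks geometrically (at rate $\sqrt{2/3}$), and $\sum_i |S_i|/|V(H_i)|$ becomes a convergent geometric series of order $O(1/\sqrt{C})$. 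Choosing $C = C(\eps)$ large enough then makes this sum at most $\eps$, independent of $n$, giving the lemma.
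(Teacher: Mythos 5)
The paper does not prove this lemma; it cites it verbatim as Lemma 3 of \cite{FPS}, and the only hint it gives is that the proof there is ``an iterative application of Lee's separator lemma for string graphs.'' Your high-level strategy --- repeatedly applying Lee's separator and recursing on the denser side --- therefore matches the cited route in spirit.

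However, your amortized analysis has a genuine gap in the step where you claim ``the invariant that the average degree stays within a factor $(1-\eps)$ of $d$ throughout \ldots\ forces $e(H_i) \le |V(H_i)|\cdot d/2$.'' The invariant you actually maintain is a \emph{lower} bound on the average degree of $H_i$; it says nothing in the other direction, and indeed the average degree can grow without bound relative to $d$ as you pass to denser sides. A concrete obstruction: take $G$ to be a clique $K_k$ together with $n-k$ isolated vertices, so $d=k(k-1)/n$. The recursion concentrates on the clique, $e(H_i)$ stays close to $\binom{k}{2}$ while $|V(H_i)|$ shrinks toward $k$, and $d_i$ climbs toward $k\gg d$. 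Once $|V(H_i)|$ reaches $\Theta(\sqrt{e(H_i)})$, Lee's separator has size $\Theta(|V(H_i)|)$ and the recursion stalls at $|V(H_i)|\approx k$, which can be vastly larger than $Cd\approx Ck^2/n$. At that point neither the geometric shrinkage of $|V(H_i)|$ nor the claimed geometric decay of $|S_i|$ holds, so the sum $\sum_i |S_i|/|V(H_i)|$ is not controlled by your argument. Since $e(H_i)\le m$ is all that monotonicity gives, the naive bound you yourself dismissed is effectively what your argument proves. A correct proof needs a separate ingredient in the high-density regime --- for instance, once $H_i$ has edge density $\Omega(1)$, one should extract a bounded-size induced subgraph of the required average degree directly (e.g.\ via the Fox--Pach theorem that dense string graphs contain large complete bipartite subgraphs, whose small induced $K_{a,a}$'s already have the right average degree) --- or a more careful charging scheme that exploits the actual drop in $e_i$, rather than merely $e_i\le m$, at steps where the density does not increase.
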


Lemma \ref{denseinduced} shows that string graphs have dense induced subgraphs with average degree close to that of the original graph. Together with Corollary \ref{cor:deg_d}, this implies Theorem \ref{string}.

\begin{proof}[Proof of Theorem \ref{string}]
Let $\Gamma$ be a $(c,t)$-sparse graph on $n$ vertices and $G \subseteq \Gamma$ be a subgraph which is a string graph with the maximum number of edges. Let $d$ be the average degree of $G$. We may suppose suppose $d \geq At$ for some large constant $A$ dependent on $c$ as otherwise we are done. Applying Lemma \ref{denseinduced} with $\varepsilon=1/2$, there is an induced subgraph $G'$ of $G$ with average degree at least $d/2$ and with at most $Cd$ vertices. As $G'$ is a subgraph of $\Gamma$, $G'$ is $(c,t)$-sparse. As $G'$ is an induced subgraph of $G$, $G'$ is a string graph. From Corollary \ref{cor:deg_d} applied to $G'$ as $\Gamma$ (noting that $G'$ is a string graph and hence does not contain any induced copy of the bipartite graph $B$ which is the $1$-subdivision of $K_5$), we get that $G'$ has small edge density, contradicting that its edge density is at least $(d/2)/(Cd)=1/(2C)$. 
\end{proof}

As an application of Theorem \ref{string}, we determine the extremal number within a constant factor when $\mathcal{P}$ is the property of being an incomparability graph of a partially ordered set, in which case for $\Gamma=G(n,p)$ with $0<p<1$ fixed, with high probability we have $\mathrm{ex}(\Gamma,\mathcal{P})=\Theta(n\log n)$. Indeed, by \cite{GRU, L, PT}, any incomparability graph is a string graph. Thus, $\mathrm{ex}(\Gamma,\mathcal{P})=O(n\log n)$ by Theorem \ref{string}. On the other hand, we saw that $\Gamma$ contains a subgraph $G$ with $\Theta(n\log n)$ edges, which is a disjoint union of cliques of size $\Theta(\log n)$. Such a graph $G$ is an incomparability graph of a partially ordered set. The same argument shows that if $\Gamma = P_q$ is the Paley graph for a square $q$, then $\ex(\Gamma, \mathcal{P}) = \Theta(q^{3/2})$. 

In contrast, if $\mathcal{P}$ is the family of comparability graphs (complements of incomparability graphs) or perfect graphs, we have with high probability, for $\Gamma = G(n,p)$, $\mathrm{ex}(\Gamma,\mathcal{P})$ is asymptotically $pn^2/4$ by the theorem of Alon, Krivelevich and Samotij \cite{AKS23}, as these families contain all bipartite graphs, but there are graphs of chromatic number $3$ like the five-cycle which are not in the family.

\section{Concluding remarks} \label{sec:concluding}

{\bf Improved bound for Paley graphs.} It is conjectured that the clique number of Paley graphs with a prime number $q$ of vertices is much smaller than the upper bound $\sqrt{q}$. This bound has only been improved recently by a constant factor \cite{DSW,HP}. An improvement on the bound in Theorem \ref{paley} when $q$ is not a square would imply the clique number of the Paley graph is $o(\sqrt{q})$. Furthermore, the Paley Graph Conjecture implies that such graphs should be $(c,q^{\varepsilon})$-sparse for any fixed $c<1/2$ and $\varepsilon>0$ and $q$ sufficiently large (see \cite{Yip} for more). Together with our results, this would imply for $n$ prime and $\Gamma$ a Paley graph on $n$ vertices that 
$\mathrm{ex}(\Gamma,\mathcal{P}_{K_{s,r}})=n^{2-1/s+o(1)}$ for fixed $r$ and $s$ with $r$ sufficiently large in $s$ and $n \to \infty$.

\medskip
{\bf Even cycles.} Corollary \ref{lowercor-fam} shows that for general graphs $H$ and $\Gamma$, we can lower bound $\mathrm{ex}(\Gamma,\mathcal{P}_H)$ via the extremal numbers of the family of clique quotients of $H$. The corollary is interesting when $H$ is bipartite. For some $H$, we understand $\mathcal{F}_H$ quite well. For example, if $H=C_{2\ell}$ is the even cycle on $2\ell$ vertices, then $\mathcal{F}_H$ consists of all cycles of length at least $\ell$ and at most $2\ell$. It is known \cite{AHL} that if a graph on $n$ vertices has girth larger than $2\ell$ (so the shortest cycle has more than $2\ell$ vertices), then it has at most $\frac{1}{2}n^{1+1/\ell}+\frac{1}{2}n$ edges. It is conjectured that this upper bound is tight up to a constant factor, and this conjecture holds for $\ell \in \{2,3,5\}$, see \cite{FS} for more on this problem. 
We thus conjecture the following: 

\begin{conjecture}\label{gnpcycle}
For every $c > 0$ and integer $\ell$ there exists $C > 1$, such that if $\Gamma$ is a $(c, t)$-sparse graph then $\mathrm{ex}(\Gamma, \mathcal{P}_{C_{2\ell}}) \le C t^{1 - 1/\ell}n^{1 + 1/\ell}$.
\end{conjecture}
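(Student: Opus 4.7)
The plan is to adapt the Bondy--Simonovits moment proof of $\mathrm{ex}(n, C_{2\ell}) = O(n^{1+1/\ell})$ to the induced, $(c,t)$-sparse setting by combining path counting with the distributional gluing framework of Section~\ref{sec:tree}.

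First, by iteratively deleting vertices of degree below $d_0 := \tfrac{1}{2} C t^{1-1/\ell} n^{1/\ell}$, I reduce to the case where $G$ has minimum degree at least $d_0$, since at most half of the edges are lost in the process. Next, I would inductively construct, for $i = 0, 1, \ldots, \ell$, a probability distribution $\lambda_{P_i}$ supported on induced copies of the path $P_i$ in $G$ that are induced in $\Gamma$, satisfying $(K, \varepsilon, \kappa)$-extension consistency relations analogous to those of Theorem~\ref{thm:tree}. The gluing step $\lambda_{P_i} \to \lambda_{P_{i+1}}$ is essentially the edge-extension case already handled in the base case of that theorem's proof, and the minimum degree assumption provides the slack needed at each iteration. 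Consequently, a typical path in $\mathrm{supp}(\lambda_{P_\ell})$ has weight at most $O(1/(n \, d_0^\ell))$.

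Projecting $\lambda_{P_\ell}$ onto the ordered endpoint pair and averaging, I find a pair $(u,v)$ connected by at least $N = \Omega(d_0^\ell/n) = \Omega(C^\ell t^{\ell-1})$ induced copies of $P_\ell$ in $G$ that are induced in $\Gamma$. I would then sample two such paths $P^{(1)}, P^{(2)}$ independently conditional on $(u,v)$ and analyze the two events (i) $P^{(1)}$ and $P^{(2)}$ share no internal vertex, and (ii) no edge of $\Gamma$ lies between their interiors. For (ii), at each of the $(\ell-1)^2$ pairs of internal positions, $(c,t)$-sparseness of $\Gamma$ applied to the two marginal supports (each of size $\ge t$) forces the non-edge density to be at least $c$, and by independence of the two samples the joint non-chord probability is at least $c^{(\ell-1)^2}$; event (i) is handled analogously via a collision-probability estimate stemming from the spread of the marginals. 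With positive probability, $P^{(1)} \cup P^{(2)}$ is then an induced $C_{2\ell}$ in $G$ that is induced in $\Gamma$.

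\textbf{Main obstacle.} The delicate point is maintaining, conditional on the endpoints $(u,v)$, marginals of $\lambda_{P_\ell}$ at every internal position that are supported on sets of size $\ge t$. In the tree framework of Section~\ref{sec:tree} each vertex has a unique predecessor, which keeps marginals naturally spread; for cycle gluing we must effectively glue along the backbone from both ends and maintain the spreading uniformly across all $\ell$ interior positions simultaneously. The multiplicative losses compound across the $\ell$ gluing steps, and a careful accounting of the interaction between these losses and the $(c,t)$-sparseness cost at the final chord-avoidance step is what should yield the exponent $1 - 1/\ell$; sharpening this balance would give a correspondingly sharper bound, while a loose accounting would produce a weaker exponent on $t$.
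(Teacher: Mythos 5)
This statement is labelled as a \emph{conjecture} in the paper (Conjecture~\ref{gnpcycle}); the authors explicitly pose it as open and give no proof, so there is no in-paper argument to compare against, and any complete proof here would in fact resolve an open problem. What you have written is best read as a plan of attack, and you have honestly flagged the place where it is incomplete: you do not actually establish that, conditional on the endpoint pair $(u,v)$, the marginals of $\lambda_{P_\ell}$ at each internal position are spread enough (maximum probability $O(1/t)$) for Lemma~\ref{lemma:good_vertices}-type arguments to control the $\Gamma$-chords between the two sampled paths. The framework of Section~\ref{sec:tree} provides exactly one-sided control: property~\eqref{prop:upper} bounds the conditional law of a \emph{leaf} given the rest of the tree, and the gluing step in the proof of Theorem~\ref{thm:tree} always conditions on a smaller subtree and adds a pendant vertex. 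There is no analogous consistency relation that bounds, say, the conditional distribution of a middle vertex of a path given \emph{both} endpoints; conditioning on $(u,v)$ can in principle collapse a middle marginal onto a set of size $o(t)$, and then the $(c,t)$-sparse hypothesis gives you nothing for the chord-avoidance step. Simply knowing $N = \Omega(C^\ell t^{\ell-1})$ does not fix this, since those paths could all funnel through a small set at some interior position.

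There is also a softer point worth noting: even granting the spread, your estimate $c^{(\ell-1)^2}$ for the joint non-chord probability treats the $(\ell-1)^2$ position pairs as if they could be handled one at a time with a fresh factor of $c$, but the two internal vertex sequences are each internally correlated (a single path is one joint sample, not $\ell-1$ independent draws), so you would need an inductive ``peeling'' akin to the $c$-unique bookkeeping in Section~\ref{sec:tree} rather than a product of $c$'s. None of this is fatal to the \emph{idea} — two independently sampled paths between a heavy pair, combined with sparseness-driven chord avoidance, is a natural route and the target bound $Ct^{1-1/\ell}n^{1+1/\ell}$ is exactly what your $d_0$ calibration produces — but as written the proposal stops precisely at the new technical ingredient that would be required, which is why the paper leaves this as a conjecture.
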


For many bipartite $H$, like complete bipartite graphs $K_{s,r}$ with $2 \leq s \leq r$ and $r \geq 3$, every clique quotient of $H$ apart from $H$ is non-bipartite, and hence, for such graphs $H$, the construction used to prove Corollary \ref{lowercor-fam} gives a natural bound of 
$$\mathrm{ex}(\Gamma,\mathcal{P}_{H}) \geq \frac{1}{2}e(\Gamma)\mathrm{ex}(\chi(\overline{\Gamma}),H)/{\chi(\overline{\Gamma}) \choose 2}.$$ It seems challenging to prove a matching upper bound in general.

\medskip
{\bf Induced subdivisions.} Given a graph $H$, let us denote by $\mathcal{D}_H$ the property of not containing a \emph{subdivision} of $H$ as an induced subgraph. In the context of induced Tur\'an-type problem, this family of graphs has received considerable attention recently  \cite{BBCD,DGHMS,GM,KO}. In particular, a result of Hunter, Milojevi\'c, Sudakov, and Tomon \cite{HMST} states that if $G$ has $C t^{O(|V(H)|)}n$ edges and no $K_{t,t}$ as a subgraph (not necessarily induced), then $G$ contains an induced subdivision of $H$. Recasting the problem in our setup, it is natural to believe that the following is true. 

\begin{conjecture} \label{subdivision}
    For every $c > 0$ and a graph $H$, there exists $C > 1$ such that if $\Gamma$ is a $(c, t)$-sparse graph then $\mathrm{ex}(\Gamma, \mathcal{D}_{H}) \le C tn$.
\end{conjecture}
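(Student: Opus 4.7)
The plan is to generalize the distribution-gluing argument from Theorem \ref{thm:tree} from induced trees to induced subdivisions. As in that proof, first preprocess $G$ by iteratively removing vertices of degree below $C't$ to obtain a subgraph $G'$ with minimum degree at least $C't$ for a sufficiently large constant $C' = C'(c, H)$; this preserves at least $Ctn/2$ edges provided $C$ is chosen large relative to $C'$. Fix a sufficiently large constant $L = L(c, H)$ and let $H^*$ be the $L$-subdivision of $H$. An induced (in $\Gamma$) copy of $H^*$ in $G'$ witnesses an induced subdivision of $H$, so the goal is to embed $H^*$. Fix a spanning tree $T_H$ of $H$, let $T^* \subseteq H^*$ be the corresponding subdivided spanning tree, and enumerate the non-tree edges as $e_1, \ldots, e_k$ (with $k \le |E(H)|$).

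Since $T^*$ is itself a tree, Theorem \ref{thm:tree} and its proof yield a probability distribution $\lambda_0$ supported on $c$-unique induced (in $\Gamma$) copies of $T^*$ in $G'$ satisfying the extension properties \eqref{prop:upper} and \eqref{prop:lower}. The plan is to inductively construct distributions $\lambda_1, \ldots, \lambda_k$, where $\lambda_i$ is supported on induced (in $\Gamma$) copies of the partial subdivision $T^* \cup P_1 \cup \cdots \cup P_i$, with $P_j$ a path of length $L$ subdividing $e_j$. Each extension $\lambda_{i-1} \to \lambda_i$ attaches a single induced path $P_i$ between two already-embedded branch vertices $b_u, b_v$, such that (a) $P_i$ is induced in $\Gamma$, (b) the internal vertices of $P_i$ have no $\Gamma$-adjacency with any previously embedded vertex, and (c) the internal vertices are disjoint from everything previously embedded.

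To realize each extension, I propose to grow two induced half-paths of length $L/2$ rooted at $b_u$ and $b_v$, using the peeling machinery of Theorem \ref{thm:tree} adapted to a specified starting vertex; each half-path lives in $G' \setminus W$, where $W$ is the set of already-embedded vertices together with vertices whose $\Gamma$-neighborhood we must avoid. The two half-paths are then glued by invoking the $(c,t)$-sparse condition, via Lemmas \ref{lemma:good_vertices} and \ref{lemma:good_vertices_2}, to show that the endpoints of the two half-paths are $\Gamma$-adjacent with non-negligible probability, conditional on the favorable events in properties \eqref{prop:upper}--\eqref{prop:lower} for each half-path distribution.

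The \emph{main obstacle} is this rooted-endpoint connection step. The peeling argument of Theorem \ref{thm:tree} fundamentally exploits the freedom to place leaves anywhere, and when both endpoints of a path are fixed in advance one must design a two-sided analogue of Lemma \ref{lemma:peeling} so that the two growing half-paths admit the required consistency relations and can be merged. Moreover, the constant $L$ must be chosen sufficiently large (in terms of $c$, $|V(H)|$, and the constants from Theorem \ref{thm:tree}) so that the cumulative failure probability over all $k$ ear extensions remains bounded away from one, and so that each half-path grown inside $G' \setminus W$ still has enough ``room'' given that $|W|$ grows (by a bounded function of $H$) with the step index $i$. If these pieces can be carried out carefully, the argument delivers an induced copy of $H^*$ in $G'$, yielding $\mathrm{ex}(\Gamma, \mathcal{D}_H) \le Ctn$ with $C = C(c, H)$ as conjectured.
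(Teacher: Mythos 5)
This statement is Conjecture \ref{subdivision} in the paper; it is presented as an open problem and the authors supply no proof, so there is no argument in the paper to compare your attempt against. Your proposal is also not a complete proof: the ``main obstacle'' you flag is precisely where the argument breaks down, and you leave it unresolved with the hedge ``if these pieces can be carried out carefully.''

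The gap is structural rather than merely technical. The peeling and extension machinery of Theorem \ref{thm:tree}, via Lemmas \ref{lemma:good_vertices} and \ref{lemma:good_vertices_2}, uses the $(c,t)$-sparse hypothesis to certify \emph{non-adjacency} in $\Gamma$ between a freshly sampled leaf and the previously embedded vertices. This is a one-sided condition: an upper bound on edge density between large sets. When you close a cycle by gluing two half-paths, you instead need the two final endpoints $x_u$ and $x_w$ to be \emph{adjacent} in $G$, with $x_u$ and $x_w$ each drawn from distributions that (by Claim \ref{claim:pi}-type bounds) place at most $O(1/t)$ mass on any single vertex and are spread over sets whose size is typically $\gg t$. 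The $(c,t)$-sparse assumption supplies no lower bound on adjacency, so there is no analogue of Lemma \ref{lemma:good_vertices} certifying that $\Pr[x_u \sim_G x_w]$ is bounded away from zero. Conditioning the last step of the second half-path to land inside $N_G(x_u)$ does not rescue the argument: $|N_G(x_u)|$ may be only $\Theta(t)$, which is too small relative to the supports needed for the peeling and reduction lemmas, and the common neighborhood $N_G(\mathrm{prev}) \cap N_G(x_u)$ at the gluing vertex can be empty with no contradiction to $(c,t)$-sparseness. Without a new idea that controls the joint law of the two endpoint distributions to force them into each other's $G$-neighborhoods --- something the tree argument never needed, since leaves are free --- your outline does not establish the conjecture.
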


\medskip
{\bf Multicolor induced Ramsey numbers.} For a graph $H$ and positive integer $r$, the induced Ramsey number $r_{\textrm{ind}}(H,r)$ is the minimum $N$ for which there is a graph $G$ on $N$ vertices such that in every $r$-edge-coloring of $G$ there is a monochromatic induced copy of $H$. The existence of induced Ramsey numbers was proved in the early 1970s, and it has remained a challenging open problem to estimate these numbers. As shown in \cite{FS09}, like for Ramsey numbers, if $H$ is non-bipartite, then $$2^r \leq r_{\textrm{ind}}(H,r) \leq r^{C(H)r}$$ for some constant $C=C(H)$. 

One of the consequences of our results implies that for bipartite $H$, the multicolor induced Ramsey number has polynomial growth in the number of colors, similar to Ramsey numbers. Further, we can determine the induced Ramsey number for certain fixed bipartite graphs $H$ up to a lower order factor. 

\begin{theorem}\label{inducedRamsey}
Let $d$ be a positive integer, and fix a $d$-bounded bipartite graph $H$ which contains a $K_{d,s}$ with $s$ sufficiently large. Then $$r_{\textrm{ind}}(H,r)=\tilde{\Theta}(r^d).$$
\end{theorem}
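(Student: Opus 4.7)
The plan is to prove matching bounds on $r_{\text{ind}}(H,r)$ up to polylogarithmic factors in $r$, with the upper bound coming from applying Theorem \ref{thm:drc} to a dense random host graph and the lower bound from reducing to the ordinary multicolor Ramsey number of $K_{d,s}$.

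For the upper bound, I would take $N = A r^d \log r$ with $A = A(H)$ sufficiently large and use a typical sample $\Gamma$ of $G(N, 1/2)$ as the host graph. As noted in the introduction, with high probability $\Gamma$ has $(1-o(1))\binom{N}{2}$ edges and is $(1/4, t)$-sparse for some $t = O(\log N) = O(\log r)$. In any $r$-edge-coloring of $\Gamma$, pigeonhole produces a color class $G_c \subseteq \Gamma$ with at least $N^2/(8r)$ edges. Choosing $A$ large enough in terms of the constant $C = C(H)$ from Theorem \ref{thm:drc} ensures
\[
    \frac{N^2}{8r} \;\geq\; C\, t^{1/d} N^{2-1/d}.
\]
Theorem \ref{thm:drc} then produces a copy of $H$ in $G_c$ which is induced in $\Gamma$; since $G_c \subseteq \Gamma$, the non-edges of $\Gamma$ on the image vertex set are automatically non-edges of $G_c$, and so this is a monochromatic induced copy of $H$ in $\Gamma$, yielding $r_{\text{ind}}(H,r) = O(r^d \log r)$.

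For the lower bound, I would exploit the fact that any monochromatic induced copy of $H$ in an $r$-colored graph $G$ contains, in particular, a monochromatic (not necessarily induced) copy of $K_{d,s}$ as a subgraph, since $H \supseteq K_{d,s}$ by assumption. By the standard reduction (given any candidate host graph $G^*$ on $N$ vertices, restrict an $r$-coloring of $E(K_N)$ to $E(G^*)$), this yields $r_{\text{ind}}(H,r) \geq r(H,r) \geq r(K_{d,s}, r)$. The bound $r(K_{d,s}, r) = \tilde\Omega(r^d)$, valid for $s$ sufficiently large in $d$, is then invoked from the literature; it can be established, for instance, by taking appropriate shifts of the projective norm graphs of Kollár-Rónyai-Szabó and Alon-Rónyai-Szabó, which achieve the Kővári-Sós-Turán bound for $K_{d,s}$-free graphs.

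The main obstacle is the lower bound. A naive random $r$-edge-coloring of $K_N$ avoids monochromatic $K_{d,s}$ only up to $N \asymp r^{(ds-1)/(d+s)}$, an exponent that approaches $d$ from below as $s \to \infty$ but never reaches it, so one truly needs an algebraic or Lovász-Local-Lemma-based construction to obtain $\tilde\Omega(r^d)$. The upper bound, by contrast, reduces to a one-line pigeonhole step plus a direct invocation of Theorem \ref{thm:drc}, and the only calculation is the comparison between $N^2/(8r)$ and $C t^{1/d} N^{2-1/d}$, which is immediate for $N \asymp r^d \log r$.
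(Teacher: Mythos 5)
Your upper bound is essentially the paper's: the paper packages the argument via the ``induced $(\varepsilon,H)$ density property'' of Dudek, Frankl, and R\"odl, but that is exactly your pigeonhole-plus-\Cref{thm:drc} argument applied to $\Gamma = G(N,1/2)$, so the two are the same up to presentation. Your comparison $N^2/(8r) \geq Ct^{1/d}N^{2-1/d}$ with $t = O(\log N)$ is the right computation, and the observation that a $G_c$-copy of $H$ that is $\Gamma$-induced is automatically a monochromatic induced copy in the host is correct.

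For the lower bound you have the right skeleton --- the chain $r_{\mathrm{ind}}(H,r) \geq r(H,r) \geq r(K_{d,s},r)$ is valid, and you correctly diagnose that a random $r$-coloring only reaches exponent $(ds-1)/(d+s) < d$. But the step you relegate to ``invoked from the literature,'' namely $r(K_{d,s},r) = \tilde{\Omega}(r^d)$, is exactly where the remaining work sits, and your proposed mechanism is shakier than the paper's. The paper applies the symmetric hypergraph theorem (Graham--Rothschild--Spencer, Ch.~4.4): for any graph $F$ one can color $E(K_n)$ with $O\bigl(\frac{n^2}{\mathrm{ex}(n,F)}\log n\bigr)$ colors so that each color class is $F$-free, and then feeds in $\mathrm{ex}(n,H) \geq \mathrm{ex}(n,K_{d,s}) = \Omega(n^{2-1/d})$. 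This converts \emph{any} extremal lower bound into a multicolor Ramsey lower bound with only a $\log$ loss, with no structural assumption on the extremal graph. Your proposal to instead take ``appropriate shifts'' of the projective norm graphs assumes a Cayley-type translation structure that the norm graph does not obviously have: the adjacency $N(x+y)=ab$ on $\bF_{p^{d-1}}\times\bF_p^*$ is not preserved by additive/multiplicative translation in a way that yields a clean edge-partition of $K_n$ into $K_{d,s}$-free translates, and even if each ``shift'' could be shown $K_{d,s}$-free, you would still need to argue that the shifts cover $E(K_n)$ economically. These gaps can in principle be bridged (e.g.\ by a random choice of shifts, reintroducing a $\log$ factor), but at that point you have re-derived a special case of the symmetric hypergraph theorem. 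You should either invoke that theorem directly, as the paper does, or give a genuine citation for $r(K_{d,s},r)=\tilde\Omega(r^d)$ rather than sketching an algebraic construction whose details do not close.
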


The upper bound follows from our results by considering a stronger density-type variant of induced Ramsey numbers studied by Dudek, Frankl,  and R\"odl \cite{DFR}. Say that a graph $G$ has the \emph{induced $(\varepsilon,H)$ density property} if for every subgraph $G'$ of $G$ with $e(G') \geq \varepsilon e(G)$, there is an induced copy of $H$ in $G$ with all edges in $G'$. If $\varepsilon=1/r$ and $G$ has the induced $(\varepsilon,H)$ density property, then there is a monochromatic induced copy of $H$ in the densest color in any $r$-edge coloring of $G$, thus showing that $r_{\textrm{ind}}(H,r) \leq |G|$. Taking $G=G(n,1/2)$ shows that Theorem \ref{thm:drc} gives the desired upper bound in Theorem \ref{inducedRamsey}. 

The lower bound in Theorem \ref{inducedRamsey} follows from the lower bound on the ordinary Ramsey number for $H$. Using the symmetric hypergraph theorem (see Chapter 4.4 of \cite{GRS}), we can color 
the edges of the complete graph on $n$ vertices with at most $\frac{n^2}{\mathrm{ex}(n,H)}\log n$ colors such that each color class is $H$-free. Together with the lower bound on $\mathrm{ex}(n,K_{d,s})$ for $s$ sufficiently large in $d$, this gives the desired lower bound on the Ramsey number (and hence the induced Ramsey number).

\end{document}